\begin{document}

\title{Reconstruction of non-$\aleph_0$-categorical theories}

\author{Itaï \textsc{Ben Yaacov}}

\address{Itaï \textsc{Ben Yaacov} \\
  Université Claude Bernard -- Lyon 1 \\
  Institut Camille Jordan, CNRS UMR 5208 \\
  43 boulevard du 11 novembre 1918 \\
  69622 Villeurbanne Cedex \\
  France}

\urladdr{\url{http://math.univ-lyon1.fr/~begnac/}}

\thanks{Author supported by ANR projects GruPoLoCo (ANR-11-JS01-008) and AGRUME (ANR-17-CE40-0026).}

\svnId $Id: Groupoid.tex 4550 2021-02-02 10:56:30Z begnac $
\thanks{\textit{Revision} {\svnRevision} \textit{of} \svnDate}

\keywords{complete theory, groupoid, reconstruction}
\subjclass[2020]{03C15, 03C95, 03C30}

\begin{abstract}
  We generalise the correspondence between $\aleph_0$-categorical theories and their automorphism groups to arbitrary complete theories in classical logic, and to some theories (including, in particular, all $\aleph_0$-categorical ones) in continuous logic.
\end{abstract}

\maketitle

\setcounter{tocdepth}{1}
\tableofcontents

\section*{Introduction}

To every $\aleph_0$-categorical theory $T$ (all theories under consideration are in a countable language) one can associate the automorphism groups $G(T)$ of its unique countable model, equipped with the Polish group topology of simple convergence.
It is by now a classical result that $G(T)$ is a classifying invariant for the bi-interpretation class of $T$ (Ahlbrandt and Ziegler \cite{Ahlbrandt-Ziegler:QuasiFinitelyAxiomatisable}, but due to Coquand).
In more explicit terms, if $T$ and $T'$ are $\aleph_0$-categorical, then $G(T) \cong G(T')$ as topological groups if and only if there exists a bi-interpretation between $T$ and $T'$.
The same was later extended by Kaïchouh and the author in \cite{BenYaacov-Kaichouh:Reconstruction} to $\aleph_0$-categorical theories in continuous logic, where $G(T)$ is the automorphism group of the unique separable model.
These correspondences opened the door to many interactions between model theory and topological dynamics, with model-theoretic properties of $T$ corresponding to well-studied dynamical properties of $G(T)$, see for example \cite{BenYaacov-Tsankov:WAP,Ibarlucia:DynamicalHierarchy,Ibarlucia:AutRand,BenYaacov-Ibarlucia-Tsankov:Eberlein}.

In the present paper, we propose to extend the correspondence between bi-interpretation classes of theories and topological groups (or group-like objects) beyond the $\aleph_0$-categorical realm.
One motivation for doing this comes from a desire to imitate the elegance of the original correspondence result.
Other motivations arise from applications of the correspondence between model-theoretic properties of $T$ and dynamical properties of $G(T)$.
When $T$ is not $\aleph_0$-categorical, one can no longer speak of ``the'' automorphism group of $T$.
Model-theoretic properties of $T$ still correspond to dynamical properties of all actions of automorphism groups of countable/separable models of $T$ on formulas, but the resulting criteria are far from being as elegant, or as useful (depending on context), as in the $\aleph_0$-categorical case.
Specifically, one needs to consider automorphism groups of all models of $T$ (or of sufficiently rich ones), and to know which functions on the group(s) correspond to formulas.

Let us make this a little more concrete using our favourite motivating example.
It is proved in \cite{BenYaacov:RandomVC} that the randomisation of a NIP theory is again NIP\@.
The proof much more analytic than model-theoretic, and there have since been several attempts to replace it with a different argument.
The only successful one, as far as we are aware, is by Ibarlucía \cite{Ibarlucia:AutRand}.
It only applies to $\aleph_0$-categorical $T$, and is based on the characterisation of NIP in terms of the representability in Rosenthal Banach spaces of a dynamical system associated with $G(T)$.
When $T$ is not $\aleph_0$-categorical, then, as in the previous paragraph, there still is a correspondence between NIP and Rosenthal representability of all actions of automorphism groups on formulas.
However, the one-by-one consideration of countable/separable models of $T$ does not pass well to the randomisation -- we can construct some separable models of $T^R$, but not all of them, as randomisations of models of $T$ -- so the criterion does not seem to be applicable.
The approach of the present paper allows us to consider all countable/separable models of $T$ jointly (rather than severally).
Recent results of Jorge Muñoz assert that this does commute quite well with randomisation, allowing us to hope to extend Ibarlucía's results.

\medskip

We achieve the desired correspondence by replacing topological groups with topological groupoids, which are briefly discussed in \autoref{sec:TopologicalGroupoid}.

We treat theories in classical and in continuous logic separately.
For a complete classical theory $T$ we construct in \autoref{sec:GroupoidTheoryClassical} a topological groupoid $\bG(T)$ over the Cantor space.
Roughly speaking, points in the base (in the Cantor space) are types of (codes for) models, and groupoid elements code isomorphisms between models of the source and target types.
We prove that $\bG(T)$ only depends on $T$ up to bi-interpretation, and conversely, in \autoref{sec:ClassicalReconstruction} we reconstruct $T$ up to bi-interpretation from $\bG(T)$.
It follows that
If $T$ is $\aleph_0$-categorical, then $\bG(T) \cong 2^\bN \times G(T) \times 2^\bN$, so our correspondence is a generalisation of the $\aleph_0$-categorical case.

The treatment of the continuous case is not quite as satisfactory.
In \autoref{sec:UniversalSkolemSort} we identify a sort of ``codes for models'' as a \emph{universal Skolem sort}.
While we do not know that one exists in full generality, we do know that:
\begin{itemize}
\item If it exists, then it is unique (up do a definable bijection).
\item All classical theories admit such a sort (constructed in \autoref{sec:GroupoidTheoryClassical}, motivating the general definition).
\item All $\aleph_0$-categorical theories (classical or continuous) admit such a sort.
\item If $T$ admits such a sort, then so does its randomisation $T^R$ (this is due to Jorge Muñoz, and is not proved here).
\end{itemize}
In \autoref{sec:GroupoidTheory}, assuming $T$ admits a universal Skolem sort, we construct $\bG(T)$ and reconstruct $T$ up to bi-interpretation.

This leaves quite a few open questions, which we present in \autoref{sec:Questions}.

\medskip

We should point out that in the context of categorical logic there exist results which also code a theory with a topological groupoid, in a very different fashion.
These include explicit constructions, such as Awodey and Forssell \cite{Awodey-Forssell:Duality}, as well as general ``there exists a groupoid that codes a topos that codes something'' arguments.
Awodey and Forssell consider models over subsets of a fixed uncountable set, so their groupoid is non-separable $T_0$ (but not $T_1$, since the closure of a singleton representing one model consists of all its sub-models).
Our construction, in contrast, yields a Polish groupoid (separable and completely metrisable as a topological space), and while we do not discuss this here, elementary embeddings of models of $T$ arise very differently, as the left-completion of the said groupoid.
To the extremely limited extent that we understand the more general constructions (the reader will forgive the author for his terrible lack of familiarity with categorical logic), similar differences apply there as well.

\section{Topological groupoids}
\label{sec:TopologicalGroupoid}

Let us recall the definition of a groupoid.
The definition is essentially equivalent to the one found in, say, Mackenzie \cite{Mackenzie:LieGroupoids}, except that we consider the base as a subset of the groupoid rather than as a separate space.

\begin{dfn}
  \label{dfn:Groupoid}
  A \emph{groupoid} is a set $\bG$ equipped with a partial composition law $\cdot \colon \bG^2 \dashrightarrow \bG$ and an inversion map ${}^{-1}\colon \bG \rightarrow \bG$, such that for all $f,g,h \in \bG$:
  \begin{enumerate}
  \item Composition is associative: $(fg)h = f(gh)$, as soon as one of the two sides is defined (which means that then the other is defined as well).
  \item The compositions $g^{-1} g$ and $g g^{-1}$ are always defined.
  \item If $fg$ is defined, then $f g g^{-1} = f$ and $f^{-1} f g = g$.
  \end{enumerate}

  We call $s_g = g^{-1} g$ the \emph{source} of $g$ and $t_g = g g^{-1}$ its \emph{target}.
  We call $e \in \bG$ \emph{neutral} if $e^2 = e$.
  The set of neutral elements of $\bG$ will be denoted $\bB$ or $\bB(\bG)$.
  We call $\bB$ the \emph{base} set of $\bG$, and say that $\bG$ is a groupoid \emph{over} $\bB$.
\end{dfn}

Let us make a few observations:

\begin{enumerate}
\item Both $s_g$ and $t_g$ are neutral for all $g \in \bG$, defining maps $s,t \colon \bG \rightarrow \bB$.
\item The composition $fg$ is defined if and only if $s_f = t_g$.
  In particular, $s_{fg} = t_{g^{-1}} = s_g$ and $t_{fg} = s_{f^{-1}} = t_f$.
\item If $e$ is neutral, then $e = e^{-1} e^2 = e^{-1} e = s_e$, and similarly $e = t_e$.
  In particular, $eg$ ($ge$) is defined, necessarily equal to $g$, if and only if $e = t_g$ ($e = s_g$).
\item If $fg$ is neutral, then $f = f g g^{-1} = g^{-1}$ and similarly $g = f^{-1}$.
  In particular, $(g^{-1})^{-1} = g$ and $(fg)^{-1} = g^{-1} f^{-1}$.
\end{enumerate}

From these observations follows the equivalence with the (possibly more familiar) definition of a groupoid as a category all of whose morphisms are invertible, in which case $\bB$ is the object set.

Notice that for $A \subseteq \bG$ we have $s(A) = A^{-1} A \cap \bB = A^{-1} \bG \cap \bB = \bG A \cap \bB$ and $t(A) = A A^{-1} \cap \bB = A \bG \cap \bB = \bG A^{-1} \cap \bB$.
Similarly, for $A \subseteq \bB$ we have $s^{-1}(A) = \bG A$ and $t^{-1}(A) = A \bG$.

The advantage of the algebraic definition is that it is easier to cast a topology on top of it.

\begin{dfn}
  \label{dfn:TopologicalGroupoid}
  A \emph{topological groupoid} is a groupoid such that $\bG$ is a Hausdorff topological space and composition and inversion are continuous (where defined).

  A topological groupoid $\bG$ with base $\bB$ is \emph{open} if the source map $s\colon \bG \rightarrow \bB$ is open.
\end{dfn}

We could also state the definition of a groupoid in a categorical language: an object equipped with arrows for source, inverse and product, say.
This would make the definition meaningful in any category with fibred products (and not only in the category of sets).
In the category of topological spaces and continuous maps, it would agree with our definition of a topological groupoid.

Since the source and target maps are total, the domain of composition, defined by the condition $t(g) = s(f)$, is closed in $\bG^2$.
It follows that the condition $g^2 = g$ is closed, so the base set $\bB$ is a closed subset of $\bG$.

Clearly, a topological groupoid is open if and only if its target map is open.
Every topological group, viewed as a groupoid over a point, is open.

\begin{dfn}
  \label{dfn:SpaceOver}
  A \emph{topological space over $\bB$} is a topological space $X$ equipped with a continuous map $\pi\colon X \rightarrow \bB$.
  The fibred product of two spaces over $\bB$ is
  \begin{gather*}
    X \times_\bB Y = \bigl\{ (x,y) \in X \times Y : \pi_X x = \pi_Y y \bigr\}.
  \end{gather*}
  When $X = \bG$ we take $\pi_X = s$, and when $Y = \bG$ we take $\pi_Y = t$.
\end{dfn}

In particular, the domain of composition in $\bG$ is $\bG \times_\bB \bG = \bigl\{ (g,h) \in \bG^2 : s_g = t_h \bigr\}$.

\begin{dfn}
  \label{dfn:GroupoidAction}
  Let $\bG$ be a groupoid over $\bB$, and $X$ a space over $\bB$.
  A \emph{continuous (left) action} of $\bG$ on $X$, denoted $\bG \curvearrowright X$, is a continuous map $\bG \times_\bB X \rightarrow X$, sending $(g,x) \mapsto gx$, such that $(gh)x = g(hx)$ whenever either is defined (so $\pi(gx) = t(g)$).
  A \emph{continuous right action} $X \curvearrowleft \bG$ is defined analogously as a map $X \times_\bB \bG \rightarrow X$.
\end{dfn}

In particular, the product map $\bG \times_\bB \bG \rightarrow \bG$ is both a left and a right continuous action of $\bG$ on itself.
On $\bB$, viewed as a space over itself, $\bG$ admits a unique action $(g,s_g) \mapsto t_g$ (and similarly a unique right action).

\begin{fct}
  \label{fct:OpenGroupoid}
  The following are equivalent for a topological groupoid $\bG$ over a base $\bB$:
  \begin{enumerate}
  \item The groupoid $\bG$ is open.
  \item For any topological space $X$ over $\bB$, the projection $\bG \times_\bB X \rightarrow X$ is open.
  \item For any continuous action $\bG \curvearrowright X$, the action law $\bG \times_\bB X \rightarrow X$ is open.
  \item The groupoid law $\bG \times_\bB \bG \rightarrow \bG$ is open.
  \end{enumerate}
\end{fct}
\begin{proof}
  \begin{cycprf}
  \item
    A basic open set of $\bG \times_\bB X$ is of the form $U \times_\bB V$, where $U \subseteq \bG$ and $V \subseteq X$ are open.
    Since $\bG$ is open, the sets $W = s(U) \subseteq \bB$ and $\pi^{-1}(W) \subseteq X$ are open, and the image of $U \times_\bB V$ in $X$ is the open set $V \cap \pi^{-1}(W)$.
  \item
    Compose with the homeomorphism $(g,x) \mapsto (g^{-1},gx)$.
  \item
    This is a special case.
  \item[\impfirst]
    If $U \subseteq \bG$ is open, then $U^{-1}U \subseteq \bG$ is open, and therefore $s(U) = U^{-1}U \cap \bB$ is open in $\bB$.
  \end{cycprf}
\end{proof}



\section{The groupoid associated to a classical theory}
\label{sec:GroupoidTheoryClassical}

In this Section, let $T$ denote a complete theory, in the sense of classical (i.e., not continuous) first order logic, in a countable language $\cL$.
We consider that by definition of the logic, all structures (so all models of $T$) are not empty.
In order to avoid borderline cases, let us also assume that no model of $T$ is a singleton (or, if $T$ is multi-sorted, that in no model are all sorts singletons).
By \emph{definable} we mean without parameters.

\begin{dfn}
  \label{dfn:ClassicalG0T}
  Let $T$ be a classical first-order theory in a countable language.
  Let $\bG_0(T) \subseteq \tS_{2 \times \bN}(T)$ consist of all possible types of a pair of enumerations of a model of $T$ (i.e., any two enumerations of any single countable model).
  Members of $\bG_0(T)$ will be denoted $g$, $h$, and so on, or possibly as types $g(x,y)$ where $x$ and $y$ stand for countable tuples of variables.
  Let $\bB_0(T) \subseteq \bG_0(T)$ to be the subset defined by the condition $x = y$.
  We may identify $\tp(a,a) \in \bB_0(T)$ with $\tp(a)$, thus identifying $\bB_0(T)$ with the subset of $\tS_\bN(T)$ consisting of types of enumerations of models.

  If $g = \tp(a,b)$ and $h = \tp(b',c')$, where $b \equiv b'$, then we might as well assume that $b = b'$, in which case $g^{-1} = \tp(b,a)$ and $gh = \tp(a,c')$ depend only on $g$ and $h$, and belongs to $\bG_0(T)$.
\end{dfn}

\begin{lem}
  \label{lem:ClassicalG0T}
  As defined above, $\bG_0(T)$ is a Polish open topological groupoid with base $\bB_0(T)$.
  If $g = \tp(a,b) \in \bG_0(T)$, then $t_g = \tp(a)$ and $s_g = \tp(b)$.
\end{lem}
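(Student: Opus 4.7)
The plan is to verify four things in order: the algebraic groupoid axioms (together with the identification of source and target), the Polish structure on $\bG_0(T)$, continuity of composition and inversion, and finally openness of the source map.

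For the algebraic axioms I would first confirm that composition is well-defined. If $g, h \in \bG_0(T)$ have matching middle types as in the definition, then $[a_i = b_j] \in g$ and $[c'_i = b'_j] \in h$ pin down specific index correspondences, so the type $\tp(a,c')$ is determined entirely by these indices together with $\tp(b) = \tp(b')$. Associativity and the remaining axioms then follow from realising three enumerations inside a single model. The identities $g g^{-1} = \tp(a,a)$ and $g^{-1} g = \tp(b,b)$, under the identification of $\tp(c,c)$ with $\tp(c)$, give $t_g = \tp(a)$ and $s_g = \tp(b)$; the neutrals $e$ (those with $e^2 = e$) are precisely the types of the form $\tp(c,c)$, so $\bB(\bG_0(T)) = \bB_0(T)$.

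For the Polish structure I would exhibit $\bG_0(T)$ as a $G_\delta$ subset of the compact metrisable type space $\tS_{2\times\bN}(T)$ and apply Alexandrov. A type $p(x,y)$ lies in $\bG_0(T)$ iff both $\{x_i\}$ and $\{y_j\}$ are universes of elementary substructures of the ambient model and these sets coincide. The Tarski--Vaught condition for (i) and (ii) says that for each formula $\varphi(z,\bar u)$ and each finite index tuple $\bar k$ the clopen set $[\neg\exists z\,\varphi(z, x_{\bar k})] \cup \bigcup_j [\varphi(x_j, x_{\bar k})]$ contains $p$, which is a countable union of clopens and hence open. Coincidence of the two sets is the analogous condition $\bigcup_j [x_i = y_j]$ together with its symmetric version, one for each $i$. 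Intersecting countably many such open conditions produces the desired $G_\delta$.

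Both the continuity and the openness arguments rest on the same principle: every coordinate of one enumeration is some coordinate of the other, since they enumerate the same model. For continuity of composition, the preimage of a basic clopen $[\varphi(x_{\bar i}, z_{\bar j})]$ equals the union over finite index tuples $\bar n, \bar m$ of the clopen fibred products $[x_{\bar i} = y_{\bar n}] \times_{\bB_0(T)} [z_{\bar j} = y_{\bar m} \wedge \varphi(y_{\bar n}, y_{\bar m})]$, manifestly open; inversion is the coordinate swap and is automatic. For openness of $s$, the image of $[\varphi(x_{\bar i}, y_{\bar j})] \cap \bG_0(T)$ is the union over $\bar m$ of the clopens $[\varphi(y_{\bar m}, y_{\bar j})] \cap \bB_0(T)$, since any enumeration $b$ of a model satisfying $\varphi(b_{\bar m}, b_{\bar j})$ extends to a pair enumeration by choosing a second enumeration $a$ of the same model with $a_{i_l} := b_{m_l}$ prescribed (here we use the standing assumption that no sort is a singleton, so that enumerations may be freely adjusted at finitely many positions). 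The main obstacle throughout is the index bookkeeping with infinite tuples; once the ``everything is named by some $y_j$'' principle is in place, all the verifications reduce uniformly to countable unions of clopen conditions.
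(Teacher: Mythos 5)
Your proposal is correct and, for the one part the paper actually spells out (openness of $s$), it follows essentially the same route: the image of a basic open set is recognised via witnesses inside the model enumerated by $b$, which are then extended to a full second enumeration of that model — the paper expresses $s(U)$ by the single formula $\exists x\,\varphi(x,y)$ where you write the equivalent countable union $\bigcup_{\bar m}[\varphi(y_{\bar m},y_{\bar j})]$, a cosmetic difference. The remaining verifications (well-definedness of composition, the Tarski--Vaught $G_\delta$ description giving Polishness, continuity of the operations), which the paper dismisses as ``easy to check,'' are carried out correctly via your ``every coordinate of one enumeration is a coordinate of the other'' principle.
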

\begin{proof}
  It is easy to check that $\bG_0(T)$ is indeed a topological groupoid.
  Let us prove that $\bG_0(T)$ is open, i.e., that the map $s \colon \tp(a,b) \mapsto \tp(b)$ is open.
  A basic open set $U \subseteq \bG_0(T)$ is defined by a formula $\varphi(x,y)$ (in which only finitely many variables actually appear).
  We claim that $s(U)$ is defined by $\exists x \, \varphi(x,y)$ (quantifying only over those $x_i$ that appear in $\varphi$).
  Indeed, let $\tp(b) \in \bB_0(T)$, so $b$ enumerate some $M \vDash T$.
  If $g = \tp(a,b) \in U$, then $a$ also enumerates $M$, $s(g) = \tp(b)$, and $\vDash \varphi(a,b)$ implies $\vDash \exists x \, \varphi(x,b)$.
  Conversely, if $\vDash \exists x \, \varphi(x,b)$, then there exists a tuple $a$ in $M$ such that $\vDash \varphi(a,b)$.
  Since only finitely many variables actually appear in $\varphi$, we may replace a tail of $a$ with an enumeration of $M$, so still $\vDash \varphi(a,b)$, and now $g = \tp(a,b) \in U$.
  Thus $s(U)$ is indeed defined by $\exists x \, \varphi(x,y)$.
\end{proof}

Our goal is to associate to each theory $T$ a groupoid $\bG(T)$ such that for any theory $T'$ we have $\bG(T) \cong \bG(T')$ as topological groupoids if and only if $T$ and $T'$ are bi-interpretable.
In fact, we desire a seemingly stronger version of the left-to-right implication, to which we refer as \emph{reconstruction}: a procedure by which we obtain, from $\bG(T)$, a theory bi-interpretable with $T$, in a (reasonably) constructive fashion.
While $\bG_0(T)$ may seem natural, neither implication seems to hold for it, nor, \textit{a fortiori}, reconstruction.
Indeed, naïve attempts at reconstruction quickly run into obstacles that seem to arise from the fact that the base $\bB_0(T)$ is not compact.
The following definition was originally an attempt to remedy this, i.e., to make the base compact.
Somewhat surprisingly, it solves all other issues at the same time, including that of a presenting the present work as a generalisation of the $\aleph_0$-categorical case.
An explanation of sorts as to why (rather than how) that happens is given in \autoref{sec:UniversalSkolemSort} (see \autoref{prp:UniversalSkolemSortClassical}).

We assume throughout that $T$ is in a single-sorted language.
The definitions and arguments adapt in an obvious manner to the multi-sorted case, with additional bookkeeping that we prefer to avoid.

\begin{dfn}
  \label{dfn:RichPhi}
  Assume that we work in a language in a single sort.
  A sequence $\Phi = \bigl( \varphi_n(x_{<n},y) : n \in \bN \bigr)$, where $y$ is a single variable, will be called \emph{rich} if every formula $\varphi(x_{<k},y)$ appears (with dummy variables) as $\varphi_n$ for some $n \geq k$.

  When there are many sorts, we fix a sort $S_i$ for each $x_i$, and require that in $\varphi_n(x_{<n},y)$, the variable $y$ belong to $S_n$.
\end{dfn}

Clearly, a rich $\Phi$ exists, provided, in the many-sorted case, that each sort is repeated infinitely often.

\begin{dfn}
  \label{dfn:DPhi}
  For a rich $\Phi = (\varphi_n : n \in \bN)$ we define
  \begin{gather*}
    D_{\Phi,n}(x_{<n}) = \bigwedge_{k<n} \forall y \, \bigl[ \varphi_k(x_{<k},y) \rightarrow \varphi_k(x_{\leq k}) \bigr],
    \qquad
    D_\Phi(x) = \bigwedge_{n \in \bN} D_{\Phi,n}(x_{<n}).
  \end{gather*}
  This just says that if there exists a witness for $\varphi_k$, then $x_k$ must be one.

  We shall view each $D_{\Phi,n}$ as a formula or as a definable set of $n$-tuples, as convenient.
  Similarly, $D_\Phi$ is a partial type or a type-definable set of infinite tuples.
\end{dfn}

\begin{lem}
  \label{lem:DPhiQuantifier}
  Any member of $D_{\Phi,n}$ in a countable model $M \vDash T$ can be extended to a member of $D_\Phi$ that moreover enumerates $M$.
  In particular, $D_\Phi$ is never empty.

  Moreover, let $\psi(x,y)$ be a formula, where $x$ is in the sort of $D_\Phi$ and $y$ is arbitrary (of course, only finitely many variables from the infinite tuples $x$ actually appear in $\psi$).
  Then the property
  \begin{gather*}
    (\exists x \in D_\Phi) \psi(x,y)
  \end{gather*}
  is expressible as a formula in the variables $y$.
\end{lem}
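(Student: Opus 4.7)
The plan is to prove the first part by a recursive ``enumerate while respecting constraints'' construction, exploiting a crucial feature of richness: the tautology $y = y$ (with any number of dummy variables from $x$) appears as $\varphi_k$ at infinitely many indices $k$, supplying infinitely many \emph{free} stages at which $a_k$ may be chosen without constraint. These free stages will be used to enumerate $M$. The \emph{moreover} clause will then follow by reducing the infinitary quantifier $\exists x \in D_\Phi$ to a finitary one, via the first part.

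For the first part, I would fix a countable $M \vDash T$ and a tuple $a_{<n} \in D_{\Phi,n}(M)$, enumerate $M = \{m_j : j \in \bN\}$ (allowing repetitions if $M$ happens to be finite), and let $N_0 < N_1 < \cdots$ be the indices $k \geq n$ at which $\varphi_k$ is, up to dummy variables, the tautology $y = y$; by richness this sequence is infinite. Define $a_k$ for $k \geq n$ recursively: if $k = N_j$, set $a_k = m_j$; otherwise, if the formula $\exists y\, \varphi_k(a_{<k}, y)$ holds in $M$, let $a_k$ be any witness, and in the remaining case let $a_k \in M$ be arbitrary. The clauses of $D_\Phi$ are then verified case by case: at free stages the mandated witness is automatic, at forced stages it is chosen, and at remaining stages the implication in $D_\Phi$ is vacuous. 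Surjectivity of $a$ onto $M$ is immediate from $a_{N_j} = m_j$. Non-emptiness of $D_\Phi$ follows by starting from the empty tuple in $D_{\Phi,0}$ in any countable model, which exists by Löwenheim--Skolem.

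For the moreover clause, since $\psi$ involves only finitely many of the $x_i$, I may assume $\psi = \psi(x_{<n}, y)$ for some $n$, and the claim reduces to the equivalence
\begin{gather*}
  (\exists x \in D_\Phi)\, \psi(x, y) \quad \Longleftrightarrow \quad (\exists x_{<n})\, \bigl[ D_{\Phi, n}(x_{<n}) \wedge \psi(x_{<n}, y) \bigr].
\end{gather*}
Since $D_{\Phi,n}$ is itself a formula, the right-hand side is first-order in $y$. The forward implication is immediate. For the reverse, given $b$ realising the right-hand side in some model, witnessed by $a_{<n}$, I would pass to a countable elementary substructure containing $a_{<n}$ and $b$ and apply the first part to extend $a_{<n}$ to some $a \in D_\Phi$ there, which then witnesses the left-hand side.

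The only delicate point I anticipate is the combinatorial bookkeeping in the first part, namely ensuring that the ``free'' stages produced by richness really are infinite in number and can absorb every $m_j$; this amounts to unpacking what the ``dummy variables'' clause in the definition of richness buys, after which the construction goes through with no further subtlety.
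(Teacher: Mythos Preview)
Your proposal is correct and is precisely the natural unwinding of what the paper records in one line as ``immediate from the definition, and implies the moreover part'': the recursive choice of witnesses at constrained stages, the use of the infinitely many tautological stages guaranteed by richness to absorb an enumeration of $M$, and the reduction of $(\exists x \in D_\Phi)\psi$ to $(\exists x_{<n})\bigl[D_{\Phi,n}(x_{<n}) \wedge \psi(x_{<n},y)\bigr]$ are exactly the intended argument. There is no alternative route here worth contrasting.
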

\begin{proof}
  The main assertion is immediate from the definition, and implies the moreover part.
\end{proof}

We can now associate to $T$ a groupoid through restriction of $\bG_0(T)$ to $D_\Phi$.

\begin{dfn}
  \label{dfn:ClassicalGT}
  Assume $T$ is a theory in classical logic, and let $\Phi$ be a rich sequence.

  We define $\tS_{m D_\Phi}(T) \subseteq \tS_{m \times \bN}(T)$ to be the (compact) set of possible types of members of the type-definable set $D_\Phi^m$.
  We define $\bB_\Phi(T) = \tS_{D_\Phi}(T)$, so $\bB_\Phi(T) \subseteq \bB_0(T)$ (any member of $D_\Phi$ must satisfy the Tarski-Vaught test), and $\bG_\Phi(T) = \bigl\{g \in \bG_0(T) : s_g,t_g \in \bB_\Phi(T) \bigr\}$.
\end{dfn}

In other words, $\bG_\Phi(T) = \bG_0(T) \cap \tS_{2 D_\Phi}(T)$ consists of all $\tp(a,b)$ where $a,b \in \Phi$ enumerate the same set (in fact, each is necessarily a sub-sequence of the other, repeating each element infinitely often).
The following is immediate from the definitions, but deserves nonetheless to be stated explicitly:

\begin{lem}
  \label{lem:ClassicalGT}
  As defined above, $\bG_\Phi(T)$ is Polish, open, and its base $\bB_\Phi(T)$ is the Cantor set.
\end{lem}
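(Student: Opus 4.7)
The lemma has three claims---Polishness, openness, and the base being the Cantor set---which I would treat in turn. For Polishness, $\tS_{2D_\Phi}(T)$ is a closed subspace of the Polish Stone space $\tS_{2\times\bN}(T)$, hence is already compact Polish. Within it, $\bG_\Phi(T) = \bG_0(T) \cap \tS_{2D_\Phi}(T)$ is cut out by the requirement that $x$ and $y$ enumerate the same set; since each individually enumerates a model (as $D_\Phi$ entails the Tarski--Vaught test), this amounts to
\[
  \bigwedge_{i} \bigvee_{j} (x_i = y_j) \wedge \bigwedge_{i} \bigvee_{j} (y_i = x_j),
\]
which is $G_\delta$ and therefore defines a Polish subspace.

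For openness, I would follow the pattern of Lemma \ref{lem:ClassicalG0T}. A basic open $U \subseteq \bG_\Phi(T)$ is defined by a formula $\varphi(x,y)$ in finitely many variables; for $\tp(b) \in \bB_\Phi(T)$ realised by an enumeration $b$ of a model $M$, the condition $\tp(b) \in s(U)$ amounts to the existence of $a \in D_\Phi$ enumerating $M$ with $\vDash \varphi(a,b)$, after padding the unused tail of $a$ to enumerate $M$. By Lemma \ref{lem:DPhiQuantifier}, the property $(\exists x \in D_\Phi)\,\varphi(x,b)$ is expressible as a formula $\chi(y)$, so $s(U)$ is clopen in $\bB_\Phi(T)$, showing that $s$ is open.

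For the base, $\bB_\Phi(T) = \tS_{D_\Phi}(T)$ is compact, metrizable, totally disconnected, and non-empty; by the Brouwer characterisation it suffices to exclude isolated points. Suppose $p$ is isolated by some $\psi(x_{<n})$ and realised by $a$ in a countable model $M$, necessarily with $|M| \geq 2$ by hypothesis. By richness I can choose $k \geq n$ with $\varphi_k(x_{<k},y) = (y=y)$, at which position $D_\Phi$ imposes no constraint on $x_k$. A short case analysis then furnishes $a_k' \in M$ with $\tp(a_{<k}, a_k') \neq \tp(a_{\leq k})$: if $\{a_0, \ldots, a_{k-1}\} \neq M$, I pick $a_k'$ on the opposite side of this set from $a_k$, distinguishing the types through the partition structure; otherwise $a_k = a_i$ for some $i < k$, and I pick $a_k' = a_j$ with $a_j \neq a_i$, again distinguishing the partition. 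Extending via Lemma \ref{lem:DPhiQuantifier} to a full $D_\Phi$-enumeration $a'$ of $M$ yields a realisation of $\psi$ with $\tp(a') \neq p$, contradicting isolation. The main obstacle is precisely this perfectness argument: one has to ensure that distinct ``free'' choices at a single position yield genuinely inequivalent complete types over the empty set, not merely different tuples related by an automorphism of $M$.
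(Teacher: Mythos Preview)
Your proof is correct and follows the paper's approach closely: Polishness via the ambient type space, openness via \autoref{lem:DPhiQuantifier} exactly as in \autoref{lem:ClassicalG0T}, and the Cantor-set claim via Brouwer's characterisation together with the no-singleton hypothesis. The paper dispatches perfectness in a single sentence (``no model is a singleton \ldots\ excludes the possibility of isolated points''), whereas you supply the actual argument---choosing a tautological position $\varphi_k$ and using the equality pattern to force distinct types---so your version is more complete on that point; your case analysis could be streamlined slightly (split on whether $a_k \in \{a_{<k}\}$ rather than on whether $\{a_{<k}\} = M$), but it is sound as written.
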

\begin{proof}
  The groupoid $\bG_\Phi(T)$ is Polish as a closed subset of a Polish space.
  The base $\bB_\Phi(T)$ is totally disconnected, compact and second-countable by construction.
  We have agreed to assume that no model of $T$ is a singleton, so no sentence that implies, modulo $T$, that the model is a singleton.
  This excludes the possibility of isolated points in $\bB_\Phi(T)$, which is therefore the Cantor set.
  Let $U \subseteq \bG_\Phi(T)$ be a basic open set, say defined by a formula $\chi(x,y)$.
  We claim that $s(U)$ is defined by $(\exists x \in D_\Phi) \chi(x,y)$ (following \autoref{lem:DPhiQuantifier}).
  Indeed, one inclusion is as for \autoref{lem:ClassicalG0T}, while the other follows from \autoref{lem:DPhiQuantifier}.
\end{proof}

Now things seem to be even worse: the groupoid depends not only on $T$, but also on $\Phi$.
Let us show that this is not truly a problem.

Let us fix some terminology.
The sorts of the language of $T$ will be called the \emph{basic sort(s)}.
More generally, an \emph{interpretable sort}, or, from now on, merely a \emph{sort}, will be any definable subset of a definable quotient of a product of the basic sorts:
\begin{gather*}
  S \subseteq (S_0 \times \ldots \times S_{n-1}) / E.
\end{gather*}
Say that a family of sorts is \emph{sufficient} if any sort (equivalently, any basic sort) is in a definable bijection with such a subset of quotient, with $S_i$ in the given family (so this family can be taken as an alternate family of basic sorts).
Of course, the easiest way to get a sufficient family of sorts is to take all basic sorts, together with some additional ones.

We gave \autoref{dfn:DPhi} with respect to the basic sorts, but we can just as well define with respect to any (sufficient) family of sorts.
So let us fix two rich sequences $\Phi$ and $\Psi$, with respect to two sufficient families of sorts (and we may reduce the general case to the one where one family is a superset of the other).

Let $x = (x_n)$ denote a variable in $D_\Phi$ and $y$ a variable in $D_\Psi$.
In what follows, $\exists x$ should be understood as $\exists x \in D_\Phi$, in the sense of \autoref{lem:DPhiQuantifier}, and similarly for $\forall x$, $\exists \tilde{x}$, as so on.
Similarly, we quantify on $y$ or $\tilde{y}$ over $D_\Psi$.

\begin{dfn}
  \label{dfn:ApproximateBijection}
  An \emph{approximate bijection} between $D_\Phi$ and $D_\Psi$ is a formula $\varphi(x,y)$ such that $\forall x \exists y \varphi$ and $\forall y \exists x \varphi$ are valid (i.e., consequences of $T$).
\end{dfn}

\begin{lem}
  \label{lem:DPhiQuantifierWitness}
  Let $\psi(x_{<n},y_{<m})$ be a formula, and assume that $(\exists y_{<m}) \psi$ is equivalent to $D_{\Phi,n}(x_{<n})$.
  Then there exist indices $n \leq i_0 < \ldots < i_{m-1}$ such that, letting $\bi = (i_j : j < m)$, the formula $\psi$ is equivalent to:
  \begin{gather*}
    (\exists z \in D_\Phi) \bigl( (z_{<n} = x_{<n}) \wedge (z_\bi = y_{<m}) \bigr).
  \end{gather*}
\end{lem}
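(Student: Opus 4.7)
The plan is to select the indices $i_j$ iteratively using the richness of $\Phi$, arranging that the Tarski--Vaught-type condition built into $D_\Phi$ at position $i_j$ precisely forces $z_{i_j}$ to ``witness $\psi$ for the value $y_j$'' given the earlier choices $y_0, \ldots, y_{j-1}$.

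The first step is to define, for $-1 \leq j \leq m-1$, the auxiliary formula
\[
  \theta_j(x_{<n}, y_{\leq j}) \ := \ (\exists y_{j+1}, \ldots, y_{m-1})\, \psi(x_{<n}, y_{<m}),
\]
so that $\theta_{m-1} = \psi$, while $\theta_{-1}(x_{<n}) = (\exists y_{<m})\psi$ is equivalent to $D_{\Phi,n}(x_{<n})$ by hypothesis. Using richness of $\Phi$, I would then pick $n \leq i_0 < i_1 < \cdots < i_{m-1}$ inductively, so that, after inserting dummy variables for the positions not among $\{0,\ldots,n-1\} \cup \{i_0,\ldots,i_{j-1}\}$, the formula $\varphi_{i_j}(z_{<i_j}, y)$ becomes $\theta_j(z_{<n}, z_{i_0}, \ldots, z_{i_{j-1}}, y)$. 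Since $\theta_j$ uses only the variables $x_{<n}$, $z_{i_0}, \ldots, z_{i_{j-1}}$ and $y$, richness does supply such an index above any prescribed threshold.

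With these indices in hand, the backward direction goes by induction on $j \geq -1$: assuming $z \in D_\Phi$, $z_{<n} = x_{<n}$, and $z_{i_k} = y_k$ for $k \leq j$, I would show $\theta_j(x_{<n}, y_{\leq j})$. The base case $j = -1$ is $D_{\Phi,n}(x_{<n})$, immediate from $z \in D_\Phi$. For the inductive step, the hypothesis $\theta_{j-1}(x_{<n}, y_{<j})$ is $(\exists y)\theta_j(x_{<n}, y_{<j}, y)$, which by construction of $\varphi_{i_j}$ equals $(\exists y)\varphi_{i_j}(z_{<i_j}, y)$; the defining property of $D_\Phi$ at index $i_j$ then forces $\varphi_{i_j}(z_{<i_j}, z_{i_j})$, that is, $\theta_j(x_{<n}, y_{\leq j})$. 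Taking $j = m-1$ gives $\psi(x_{<n}, y_{<m})$.

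For the forward direction, given $\psi(x_{<n}, y_{<m})$ witnessed in a countable model $M$, I would build $z \in D_\Phi$ by setting $z_k = x_k$ for $k < n$, $z_{i_j} = y_j$ for each $j$, and filling in the remaining positions exactly as in the proof of \autoref{lem:DPhiQuantifier}, choosing $z_k$ to be a witness for $\varphi_k(z_{<k}, \cdot)$ whenever one exists in $M$ and otherwise picking elements so that $z$ eventually enumerates $M$. The only additional check is at the constrained positions $i_j$: the premise $(\exists y)\varphi_{i_j}(z_{<i_j}, y)$ is $\theta_{j-1}(x_{<n}, y_{<j})$, which holds by projecting $\psi(x_{<n}, y_{<m})$, and $y_j$ does witness the required $\theta_j(x_{<n}, y_{\leq j})$, again by projection from $\psi$. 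The main technical point is the initial layout of the indices via richness; after that, both implications are direct applications of the defining property of $D_\Phi$, so no genuine obstacle remains.
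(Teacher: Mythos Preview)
Your proposal is correct and follows essentially the same approach as the paper. Your auxiliary formula $\theta_j(x_{<n},y_{\leq j})$ is, after unfolding the substitutions, exactly the formula the paper assigns to $\varphi_{i_j}$, namely $(\exists y_{<m})\bigl[\psi(x_{<n},y_{<m}) \wedge (y_j = z) \wedge (y_{<j} = x_{i_{<j}})\bigr]$; the paper then simply declares the verification ``easy to check'', whereas you spell out the inductive arguments for both directions.
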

\begin{proof}
  We choose $i_j$ by induction on $j < m$, such that $\varphi_{i_j}(x_{<i_j},z)$ is
  \begin{gather*}
    (\exists y_{<m})\left[ \psi(x_{<n},y_{<m}) \wedge (y_j = z) \wedge (y_{<j} = x_{i_{<j}}) \right].
  \end{gather*}
  With this choice, our assertion is easy to check.
\end{proof}

\begin{lem}
  \label{lem:ApproximateBijectionExtension}
  For any approximate bijection $\varphi$ between $D_\Phi$ and $D_\Psi$, and for any $j$, there exists a definable map $f\colon D_\Phi \rightarrow S_{y_j}$, where $S_{y_j}$ denotes the sort of $y_j$, such that $\varphi(x,y) \wedge \bigl( y_j = f(x) \bigr)$ is again an approximate bijection.
%
\end{lem}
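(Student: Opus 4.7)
The plan is to take $f(x) = x_n$ for a judiciously chosen index $n$, exploiting the richness of $\Phi$ so that the $D_\Phi$ witness requirement at step $n$ encodes exactly the condition ``$z$ is realised as $y_j$ for some $y \in D_\Psi$ with $\varphi(x,y)$''. Concretely, fix $k_0$ and $k_1 > j$ so that $\varphi(x,y)$ depends only on $x_{<k_0}$ and $y_{<k_1}$. By \autoref{lem:DPhiQuantifier} applied to $\Psi$, the condition
\[
  \theta(x_{<k_0}, z) := (\exists y \in D_\Psi)\bigl( \varphi(x, y) \wedge y_j = z \bigr)
\]
is expressible as a first-order formula, with $z$ in the sort $S$ of $y_j$. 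Since each sort appears infinitely often in the rich sequence $\Phi$, there exists some $n \geq k_0$ with $x_n$ of sort $S$ such that $\varphi_n(x_{<n}, z)$ coincides with $\theta(x_{<k_0}, z)$ (the variables $x_{k_0}, \ldots, x_{n-1}$ being dummies). I set $f(x) = x_n$.

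For the first half of approximate bijection, let $x \in D_\Phi$. Since $\varphi$ is an approximate bijection, some $y \in D_\Psi$ satisfies $\varphi(x,y)$, hence $\theta(x_{<k_0}, y_j)$ holds and $\exists z\, \varphi_n(x_{<n}, z)$. The defining condition of $D_\Phi$ at step $n$ now forces $\varphi_n(x_{\leq n})$, i.e.\ $\theta(x_{<k_0}, x_n)$, which unpacks to the existence of $y' \in D_\Psi$ with $\varphi(x, y') \wedge y'_j = x_n = f(x)$.

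For the second half, let $y \in D_\Psi$ in a countable model $M$, and pick $x' \in D_\Phi(M)$ with $\varphi(x', y)$ using the other direction of the approximate bijection. Copy the initial segment $x_k := x'_k$ for $k < n$; then $x_{<n}$ still satisfies $D_{\Phi, n}$ and $\varphi(x_{<k_0}, y)$ holds. Since $y$ itself witnesses $\theta(x_{<k_0}, y_j)$, we have $\varphi_n(x_{<n}, y_j)$, so setting $x_n := y_j$ preserves $D_{\Phi, n+1}$. Applying \autoref{lem:DPhiQuantifier} extends $x_{\leq n}$ to a full $x \in D_\Phi$ enumerating $M$, and this $x$ satisfies $\varphi(x, y) \wedge y_j = f(x)$. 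The main subtlety lies in this last step: forcing $x_n$ to take the prescribed value $y_j$ while respecting the Skolem-like $D_\Phi$ constraints is only possible because of the engineered equivalence $\varphi_n \equiv \theta$. The remainder is bookkeeping.
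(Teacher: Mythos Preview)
Your argument is clean and the mechanism you describe --- engineer a coordinate $x_n$ whose $D_\Phi$ witness condition forces it to be some $y_j$ compatible with $\varphi$ --- is exactly the right idea. However, there is a genuine gap at the sentence ``Since each sort appears infinitely often in the rich sequence $\Phi$, there exists some $n \geq k_0$ with $x_n$ of sort $S$.'' This presupposes that the sort $S = S_{y_j}$ occurs among the sorts of the $\Phi$-sequence. But $\Phi$ and $\Psi$ are rich with respect to \emph{different} sufficient families of sorts, and the sort of $y_j$ lies in $\Psi$'s family, not necessarily in $\Phi$'s. So there may be no $n$ at all with $x_n$ in sort $S$, and your definition $f(x) = x_n$ cannot get off the ground. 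Note that the reduction ``one family is a superset of the other'' does not save you: the back-and-forth of \autoref{prp:DPhiUnique} needs the lemma in both directions, so in at least one of them the target sort is foreign to the source sequence.

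The paper's proof addresses precisely this obstacle. Since $\Phi$'s family is sufficient, one may write $S_{y_j}$ as a definable subset of a quotient $(S_0 \times \cdots \times S_{m-1})/E$ with each $S_i$ in $\Phi$'s family. One then forms the formula $\psi(x_{<n},\bar z) = D_{\Phi,n}(x_{<n}) \wedge \exists y\,\bigl(\varphi(x,y) \wedge y_j = [\bar z]_E\bigr)$, observes that $(\exists \bar z)\,\psi$ is equivalent to $D_{\Phi,n}$, and invokes \autoref{lem:DPhiQuantifierWitness} to obtain a tuple of indices $\bi$ with $f(x) = [x_\bi]_E$. Your argument is exactly the special case $m=1$ with $E$ trivial, and once you insert this quotient-of-product step (and replace your single $n$ by the tuple $\bi$ from \autoref{lem:DPhiQuantifierWitness}), your two verification paragraphs go through essentially unchanged.
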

\begin{proof}
  We may express the sort of $y_j$ as a definable subset of something of the form $(S_0 \times \cdots \times S_{m-1})/E$ for some basic sorts of $\Phi$ and definable equivalence relation $E$.
  Let $n$ be larger than any $i$ such that $x_i$ appears in $\varphi$.
  Let $\psi(x_{<n},\bar{z})$ be the formula
  \begin{gather*}
    D_{\Phi,n}(x_{<n}) \wedge \exists y \, \bigl( \varphi(x,y) \wedge y_j = [\bar{z}]_E \bigr).
  \end{gather*}
  Since $\varphi$ is assumed to be an approximate bijection, the formula $D_{\Phi,n}$ is equivalent to $(\exists \bar{z}) \psi$.
  In other words, $\psi$ satisfies the hypothesis of \autoref{lem:DPhiQuantifierWitness}, so let $\bi$ be as in the conclusion.
  We claim that $\varphi(x,y) \wedge \bigl( y_j = [x_\bi]_E \bigr)$ is an approximate bijection.
  Indeed, if $x \in D_\Phi$, then $\psi(x_{<n},x_{\bi})$ holds, so $y \in D_\Psi$ as desired exists.
  Conversely, if $y \in D_\Psi$, then a tuple $x_{<n}$ exists such that $D_{\Phi,n}(x_{<n}) \wedge \varphi(x_{<n},y)$ holds, and a tuple $\bar{z}$ exists such that $y_j = [\bar{z}]_E$.
  Therefore $\psi(x_{<n},\bar{z})$ holds, whence the existence of $x \in D_\Phi$ such that $x_\bi = \bar{z}$, and $y_j = [x_\bi]_E$.
  %
  %
  %
\end{proof}

\begin{prp}
  \label{prp:DPhiUnique}
  For any two sufficient families of sorts, and any two rich sequences $\Phi$ and $\Psi$ is these families, respectively, there exists a definable bijection $\sigma\colon D_\Phi \cong D_\Psi$.
\end{prp}
\begin{proof}
  For the main assertion, apply a back-and-forth construction using \autoref{lem:ApproximateBijectionExtension}.
  More precisely, start with $\varphi_0(x,y) = \top$ (True).
  Then, given $\varphi_n$, apply \autoref{lem:ApproximateBijectionExtension} twice to find $f_n$ and $g_n$ definable such that
  \begin{gather*}
    \varphi_{n+1}(x,y) = \varphi_n(x,y) \wedge x_n = f_n(y) \wedge y_n = g_n(x).
  \end{gather*}
  is an approximate bijection.
  Together, these yield the desired definable bijection.
  %
\end{proof}

One usually defines a \emph{bi-interpretation} between $T$ and $T'$ as a pair of interpretation schemes of one in the other, such that, when composed to yield an interpretation of $T$ or of $T'$ in itself, the models are uniformly definably isomorphic to their interpreted copies.
It is however fairly easy to check that this is equivalent to the property that the theory obtained by adjoining to $T$ the sort of $T'$ (without forgetting anything), and the one that is obtained by adjoining to $T'$ the sort of $T$, are the same up to a change of language.
This, together with \autoref{prp:DPhiUnique}, yields:

\begin{thm}
  \label{thm:GTWellDefined}
  Let $T$ and $T'$ be bi-interpretable, and let $\Phi$ and $\Psi$ be rich sequences for the languages of $T$ and of $T'$, respectively.
  Then $\bG_\Phi(T)$ and $\bG_\Psi(T')$ are isomorphic as topological groupoids.

  In other words, up to isomorphism of topological groupoids, $\bG_\Phi(T)$ does not depend on $\Phi$, and only depends on $T$ up to bi-interpretation.
\end{thm}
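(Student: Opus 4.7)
The strategy is the one telegraphed by the sentence immediately preceding the theorem: reduce bi-interpretability to a statement about a single common theory $T''$ in the union of languages, then invoke \autoref{prp:DPhiUnique} on that common theory.

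First I would set up the common theory. Unpacking the reformulation of bi-interpretation mentioned in the paragraph above the theorem, there is a theory $T''$ in the language $\cL \cup \cL'$ (with all sorts of both included) whose $\cL$-reduct is $T$, whose $\cL'$-reduct is $T'$, and such that each of the families $\{S_i : S_i \text{ a sort of } \cL\}$ and $\{S'_j : S'_j \text{ a sort of } \cL'\}$ is a sufficient family of sorts for $T''$. Moreover, every model of $T$ expands uniquely to a model of $T''$, and similarly for $T'$. Consequently, $\Phi$ is a rich sequence in a sufficient family of sorts for $T''$, and so is $\Psi$.

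Next I would identify $\bG_\Phi(T)$ with $\bG_\Phi(T'')$ (and likewise $\bG_\Psi(T') \cong \bG_\Psi(T'')$). The map sending a type $\tp^{T''}(a,b)$ to its $\cL$-reduct $\tp^T(a,b)$ is a continuous surjection $\bG_\Phi(T'') \twoheadrightarrow \bG_\Phi(T)$; since any enumeration of a model of $T$ extends uniquely to an enumeration of the corresponding model of $T''$, this map is a bijection, and since both sides are compact Hausdorff on each fiber $\tS_{2D_\Phi}$, it is a homeomorphism. It obviously preserves composition, inverse, source and target, so it is an isomorphism of topological groupoids. The same applies on the $\Psi$-side.

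Now I would apply \autoref{prp:DPhiUnique} inside $T''$ to the two rich sequences $\Phi$ and $\Psi$, obtaining a $T''$-definable bijection $\sigma\colon D_\Phi \cong D_\Psi$. The induced map $\tp(a,b) \mapsto \tp(\sigma(a),\sigma(b))$ is a well-defined bijection $\bG_\Phi(T'') \to \bG_\Psi(T'')$: its inverse is given by $\sigma^{-1}$, which is equally definable. It preserves composition, inverse, source and target because $\sigma$ is a bijection applied coordinatewise. For continuity, a basic open set in $\bG_\Psi(T'')$ is defined by a formula $\chi(x',y')$ in the sort of $D_\Psi$, and its preimage is defined by $\chi(\sigma(x),\sigma(y))$, which is again a formula in the joint language and thus defines an open set.

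The main (very mild) obstacle is purely notational: carefully distinguishing types in $\cL$, in $\cL'$, and in $\cL \cup \cL'$, and verifying that the reduct maps are homeomorphisms of groupoids on the compact subspaces $\tS_{2D_\Phi}$ and $\tS_{2D_\Psi}$ -- none of which is delicate. Composing the three isomorphisms
\[
  \bG_\Phi(T) \cong \bG_\Phi(T'') \cong \bG_\Psi(T'') \cong \bG_\Psi(T')
\]
yields the desired isomorphism, and the ``in other words'' follows by taking $T = T'$ (any theory is trivially bi-interpretable with itself, so $\bG_\Phi(T) \cong \bG_\Psi(T)$ for any two rich sequences $\Phi, \Psi$).
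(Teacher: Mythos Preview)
Your proposal is correct and follows exactly the route the paper indicates in the paragraph preceding the theorem: pass to a common expansion $T''$ in which both $\Phi$ and $\Psi$ are rich sequences for sufficient families of sorts, then apply \autoref{prp:DPhiUnique}. One small wording caveat: $\bG_\Phi(T)$ itself is not compact, so the homeomorphism argument should be phrased as ``the reduct map is a continuous bijection between the compact Hausdorff spaces $\tS_{2D_\Phi}(T'')$ and $\tS_{2D_\Phi}(T)$, hence a homeomorphism, which then restricts to a homeomorphism of the groupoids''---which is clearly what you intend.
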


From now on we may denote $\bG_\Phi(T)$ by $\bG(T)$, omitting $\Phi$.

When $T$ is $\aleph_0$-categorical (so, in particular, complete), we have already associated to $T$ a different object, the topological \emph{group} $G(T) = \Aut(M)$, where $M$ is any countable model of $T$.
Viewing $G(T)$ as a topological groupoid, it is distinct from $\bG(T)$, since the base of $G(T)$ is a singleton (its identity).
Our next result says that this is the only difference between the two.

Let $G$ be a topological group and $\bB$ a topological space.
The set $\bB \times G \times \bB$ is naturally a groupoid based over $\bB$, with composition law $(x,g,y)(y,f,z) = (x,gf,z)$.

\begin{dfn}
  \label{dfn:TriviallyBasedGroupoid}
  Say that a topological groupoid $\bG$ is \emph{trivially based} if it is isomorphic, as a topological groupoid, to a groupoid of the form $\bB \times G \times \bB$ (where $\bB$ is necessarily the base of $\bG$).
  A \emph{trivialising section} for $\bG$ is a continuous map $\bg\colon \bB \rightarrow \bG$ such that $t \circ \bg = \id_\bB$ and $s \circ \bg$ is constant.
\end{dfn}

\begin{fct}
  \label{fct:TriviallyBasedGroupoid}
  A topological groupoid $\bG$ over $\bB$ is trivially based if and only if it admits a trivialising section.
  In this case $\bG \cong \bB \times G \times \bB$, where $G \cong \bG_e = \{g \in \bG : s_g = t_g = e\}$ for any $e \in \bB$.
\end{fct}
\begin{proof}
  Assume first that is trivially based, say $\bG = \bB \times G \times \bB$.
  Let $e \in \bB$.
  Then $G \cong \bG_e$, and $\bg(e') = (e',1,e)$ is a trivialising section.
  Conversely, assume that $\bg$ is a trivialising section, say $s \circ \bg \equiv e$, and let $G = \bG_e$.
  Then $f^\bg = \bg(t_f)^{-1} f \bg(s_f) \in G$ for all $f \in \bG$, and
  \begin{gather*}
    f \mapsto \bigl( t_f, f^\bg, s_f \bigr)
  \end{gather*}
  is the desired isomorphism $\bG \cong \bB \times G \times \bB$.
\end{proof}

\begin{prp}
  \label{prp:ClassicalGTAleph0Categorical}
  Let $T$ be $\aleph_0$-categorical, and let $G(T)$ be the isomorphism group of its countable model.
  Then $\bG(T) \cong 2^\bN \times G(T) \times 2^\bN$.
\end{prp}
\begin{proof}
  Let $\Phi$ be rich and let $q(x) \in \bB_\Phi(T)$.
  We have already observed that $\bB_\Phi(T) \cong 2^\bN$.
  Let $q_n(x_{<n})$ be the restriction of $q$ to $x_{<n}$, and for $\ell \geq n$, let $q_{n,\ell}(x_{<n},x_\ell)$ be the restriction of $q$ to $x_{<n},x_\ell$.
  We define $A_n \in \bN$ such that if $b \vDash q$, then any $1$-type over $b_{<n}$ is realised by $b_i$ for some $n \leq i < A_n$.
  We then define $B_0 = 0$ and $B_{k+1} = A_{B_k}$.
  Then, if $B_k \leq n < B_{k+1}$, we choose $m(n) > m(n-1)$ such that $\varphi_{m(n)}(x_{<m(n)},y)$ is the formula saying that
  \begin{itemize}
  \item if $q_{n+1}(x_{m(0),\ldots,m(n-1)},x_k)$ holds, then $y = x_k$,
  \item and otherwise, if $n < \ell < A_n$ is least such that $q_{n,\ell}(x_{m(0),\ldots,m(n-1)},x_k)$ holds (such $\ell$ must exist), then $q_{n+1,\ell}(x_{m(0),\ldots,m(n-1)},y,x_k)$.
  \end{itemize}
  Let $a \in D_\Phi$ and $b = m^*(a) = (a_{m(i)} : i \in \bN)$.
  One proves by induction on $n$ that $q_n( b _{<n} )$ must hold.
  Indeed, in the second case such a minimal $\ell$ must exist by choice of $A_n$, and in either case a $y$ as desired must exist, so $\varphi_{m(n)}(a_{<m(n)},b_n)$ holds, and implies $q_{n+1}(b_{\leq n})$.

  We also claim that $a$ and $b$ enumerate the same set, and more precisely, that $a_k = b_\ell$ for some $B_k \leq \ell < B_{k+1}$.
  Indeed, assume that $\tp(b_{<B_k},a_k) = q_{B_k,\ell}$, where $B_k \leq \ell < B_{k+1}$ is least.
  Then by induction on $B_k \leq n \leq \ell$ we have $\tp(b_{<n},a_k) = q_{n,\ell}$.
  In particular we have $\tp(b_{<\ell},a_k) = q_{\ell,\ell} = q_{\ell+1}$, so $b_\ell = a_k$.

  Therefore, if $p = \tp(a) \in \bB(T)$, then $\bg(p) = \tp\bigl( a, m^*(a) \bigr) \in \bG(T)$, and $\bg\colon \bB(T) \rightarrow \bG(T)$ is a trivialising section.
  It is easy to check that $\bG(T)_q \cong G(T)$, concluding the proof.
\end{proof}

\section{Reconstructing a classical theory}
\label{sec:ClassicalReconstruction}

We turn to reconstruction, namely, recovering $T$, up to bi-interpretation, from the topological groupoid $\bG = \bG(T) = \bG_\Phi(T)$, for some (any) choice of $\Phi$.
Members of $\bG$ represent $2$-types in $D_\Phi$, and we are soon going to see that we can recover formulas in two (imaginary sort) variables as subsets of $\bG$ -- most importantly, definable equivalence relations.
If we want to recover formulas in $k$ variables, we need an analogue of $\bG$ for $k$-types in $D_\Phi$.
This can be constructed directly from $\bG$ (that is to say, without knowing that it is of the form $\bG_\Phi(T)$), as follows.

\begin{dfn}
  \label{dfn:GroupoidFibredPower}
  Let $\bG$ be a topological groupoid, and let $k \in \bN$.
  We define $\bG^{k/t}$ as the $k$-fold $t$-fibred power (the first $e$ is not really necessary unless $k = 0$):
  \begin{gather*}
    \bG^{k/t} = \{ (e,g) \in \bB \times \bG^k : e = t_{g_0} = t_{g_1} = \cdots \}.
  \end{gather*}
  It is equipped with natural maps
  \begin{align*}
    t\colon \bG^{k/t} & \rightarrow \bB,
    & s\colon \bG^{k/t} & \rightarrow \bB^k,
    \\
    (e,g) & \mapsto e,
    & (e,g) & \mapsto (s_{g_0}, \ldots, s_{g_{k-1}}),
  \end{align*}
  and with corresponding groupoid actions actions $\bG \curvearrowright \bG^{k/t} \curvearrowleft \bG^k$.

  When $k \geq 1$, we define
  \begin{gather*}
    \bG^{[k]} = \bG \backslash \bG^{k/t} = \{ \bG g :g \in \bG^{k/t} \},
  \end{gather*}
  equipped with the quotient topology and the induced action $\bG^{[k]} \curvearrowleft \bG^k$.
\end{dfn}

We have a natural homeomorphism $\theta\colon \bG^{k/t} \cong \bG^{[k+1]}$:
\begin{gather*}
  \theta\colon (e,h) \mapsto \bG (e,e,h),
  \qquad
  \theta^{-1}\colon \bG (e, g) \mapsto (s_{g_0},g_0^{-1}g_1,\ldots, g_0^{-1}g_k).
\end{gather*}
This homeomorphism sends the actions $\bG \curvearrowright \bG^{k/t} \curvearrowleft \bG^k$ to $\bG^{[k+1]} \curvearrowleft \bG^{k+1}$:
\begin{gather*}
  \theta(g \cdot p \cdot h) = \theta(p) \cdot (g^{-1},h).
\end{gather*}
In particular, $\bB \cong \bG^{[1]}$ and $\bG \cong \bG^{[2]}$, replacing the double action $\bG \curvearrowright \bG \curvearrowleft \bG$ with $\bG \curvearrowleft \bG^2$ ($g \cdot (f,h) = f^{-1} g h$).

When $\bG = \bG_\Phi(T)$, it follows that $\bG^{[k]}$ can be identified with the space of types $p = \tp(a,b,c,\ldots) \in \tS_{k D_\Phi}(T)$ such that $a$, $b$, $c$, and so on all enumerate the same model.
Indeed, we may identify such $p$ with $(e,g) \in \bG^{k-1/t}$, where $e = \tp(a)$, $g_0 = \tp(a,b)$, $g_1 = \tp(a,c)$, and so on (it is easy to check that this identification is homeomorphic), and therefore with $\bG(e,e,g) \in \bG^{[k]}$.
From now on we shall just pretend that $\bG^{[k]}$ is given in this fashion as a subspace of $\tS_{k \Phi}(T)$, so $\bG = \bG^{[2]}$.
The action $\bG^{[k]} \curvearrowleft \bG^k$ is then easy to describe: if $p = \tp(a,b,\ldots) \in \bG^{[k]}$, $g = \tp(a,a')$, $h = \tp(b,b')$ and so on, then $p \cdot (g,h,\ldots) = \tp(a',b',\ldots)$.

Let also $\varphi(x^i : i < k)$ is a formula with $x^i \in D_\Phi$.
We then define
\begin{gather*}
  [\varphi] = \bigl\{ p \in \tS_{k D_\Phi}(T) : \varphi \in p \bigr\},
  \qquad
  [\varphi]_\bG = [\varphi] \cap \bG^{[k]} \subseteq \bG^{[k]}.
\end{gather*}
When $k = 2$, we may identify $\bG^{[2]}$ with $\bG$, and $\bigl[\varphi(x,y)\bigr]_\bG$ with a subset of $\bG$, accordingly.
Let us understand how the various actions above relate to this interpretation of formulas.

We say that $\varphi$ \emph{only uses $n$ variables} if of each $x^i$, which is an infinite tuple of variables, only the first $n$ ones, denoted $x^i_{<n}$, actually occur freely in $\varphi$.

\begin{lem}
  \label{lem:ClassicalReconstructionProductOfFormulas}
  Let $\varphi(x,y)$ and $\psi(y,z)$ be formulas with variables in $D_\Phi$, and let $\chi(x,z)$ be the formula $(\exists y) (\varphi \wedge \psi)$ (which is indeed a formula, as per \autoref{lem:DPhiQuantifier}).
  Then
  \begin{gather*}
    [\varphi]_\bG [\psi]_\bG = [\chi]_\bG,
  \end{gather*}
  where all are viewed as subsets of $\bG$.

  More generally, let $\varphi(x^i : i < k)$ be a formula, and for each $i < k$, let $\psi^i(x^i,y^i)$ be a formula, with all variables in $D_\Phi$.
  Let $\chi(y^i : i < k)$ be the formula
  \begin{gather*}
    (\exists x^0, x^1, \ldots )\bigl( \varphi \wedge \psi_0 \wedge \psi^1 \wedge \cdots \bigr).
  \end{gather*}
  Then
  \begin{gather*}
    [\varphi]_\bG \cdot \bigl( [\psi^0]_\bG \times [\psi^1]_\bG \times \cdots \bigr) = [\chi]_\bG,
  \end{gather*}
  Where $[\varphi]_\bG$ and $[\chi]_\bG$ are subsets of $\bG^{[k]}$, each $[\psi^i]_\bG$ is viewed as a subset of $\bG$, and the dot represents the action $\bG^{[k]} \curvearrowleft \bG^k$.
\end{lem}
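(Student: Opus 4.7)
The plan is to prove the general statement directly and note that $k=2$ is a special case. Both inclusions reduce to unwinding the identifications of $\bG^{[k]}$ with types $\tp(a^0,\ldots,a^{k-1})$ of jointly enumerating tuples from $D_\Phi$, together with the description of the right action of $\bG^k$ given just above the lemma.

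For the inclusion $[\varphi]_\bG \cdot \bigl([\psi^0]_\bG \times \cdots \times [\psi^{k-1}]_\bG\bigr) \subseteq [\chi]_\bG$, I would take $q$ in the left hand side, write $q = p \cdot (g^0, \ldots, g^{k-1})$ with $p \in [\varphi]_\bG$ and each $g^i \in [\psi^i]_\bG$, realise everything in a single countable model: $p = \tp(a^0,\ldots,a^{k-1})$ with all $a^i \in D_\Phi$ enumerating one $M \vDash T$ and $\varphi(a^0,\ldots,a^{k-1})$ holding, and $g^i = \tp(a^i,b^i)$ (the source of $g^i$ matches the type of $a^i$ in $p$, so we can take the first coordinate to be the same $a^i$) with $b^i \in D_\Phi$ and $\psi^i(a^i,b^i)$ holding. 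Then $q = \tp(b^0,\ldots,b^{k-1})$ and the $a^i$ witness $\chi$ in $M$, so $\chi \in q$.

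For the reverse inclusion, I would take $q = \tp(b^0,\ldots,b^{k-1}) \in [\chi]_\bG$ realised in some countable $M \vDash T$; by \autoref{lem:DPhiQuantifier} the existential quantifiers $\exists x^i \in D_\Phi$ in $\chi$ are witnessed inside $M$, so there exist $a^i \in D_\Phi^M$ with $\varphi(a^0,\ldots,a^{k-1})$ and $\psi^i(a^i,b^i)$ for each $i$. Since every element of $D_\Phi^M$ enumerates $M$, all the $a^i$ and $b^i$ jointly enumerate the same model, so $p = \tp(a^0,\ldots,a^{k-1}) \in [\varphi]_\bG$ and $g^i = \tp(a^i,b^i) \in [\psi^i]_\bG$ are genuine elements of $\bG^{[k]}$ and $\bG$, and by the description of the action $q = p \cdot (g^0,\ldots,g^{k-1})$, placing $q$ in the left hand side.

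The first part of the lemma is the special case $k = 2$ with $\varphi(x^0,x^1) = \varphi(x,y)$ and $\psi^0(x^0,y^0) = \top$, $\psi^1(x^1,y^1) = \psi(y,z)$ relabelled appropriately, combined with the identification $\bG \cong \bG^{[2]}$ that replaces the left-right action $\bG \curvearrowright \bG \curvearrowleft \bG$ by the right action of $\bG^2$ via $g \cdot (f,h) = f^{-1} g h$; alternatively one can just run the same argument a second time in this lower-dimensional setting. The only potential obstacle is confirming that source/target constraints in the action match up so that one really may realise the relevant tuples in a common model, but this is exactly the content of the identification of $\bG^{[k]}$ with the types of $k$-tuples of joint enumerations of a model, so no real difficulty arises.
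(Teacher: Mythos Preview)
Your approach matches the paper's (which only writes out the case $k=2$ and declares the general case ``essentially identical''), and the inclusion $[\varphi]_\bG \cdot \prod_i [\psi^i]_\bG \subseteq [\chi]_\bG$ is correct as you describe it. However, the reverse inclusion contains a false step: the assertion ``every element of $D_\Phi^M$ enumerates $M$'' is not true. If $N \prec M$ is a proper countable elementary substructure and $a \in D_\Phi(N)$, then also $a \in D_\Phi(M)$ (each $D_{\Phi,n}$ is a first-order formula and $N \preceq M$), yet $a$ enumerates only $N$. Hence the witnesses $a^i \in D_\Phi(M)$ you extract from $\chi$ need not enumerate $M$, and then $\tp(a^i,b^i)$ need not lie in $\bG$ at all, nor $\tp(a^0,\ldots,a^{k-1})$ in $\bG^{[k]}$.

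The repair is exactly what the paper does: since $\varphi$ and the $\psi^i$ only use, say, $n$ variables from each infinite tuple, the truth of $\chi$ at $(b^0,\ldots,b^{k-1})$ in $M$ yields \emph{finite} witnesses $a^i_{<n} \in D_{\Phi,n}(M)$ satisfying $\varphi$ and the $\psi^i$; then the main clause of \autoref{lem:DPhiQuantifier} lets you extend each $a^i_{<n}$ to an element $a^i \in D_\Phi$ that \emph{does} enumerate $M$. With that correction your argument goes through unchanged. (A minor slip: in the forward inclusion you want the \emph{target} of $g^i$, not the source, to match $\tp(a^i)$.)
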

\begin{proof}
  For the first identity, the inclusion $[\varphi]_\bG [\psi]_\bG \subseteq [\chi]_\bG$ is clear.
  For the opposite inclusion assume that $\tp(a,c) \in [\chi]_\bG$.
  Then $a$ and $c$ both enumerate the same model $M$.
  Assuming that $\varphi$ and $\psi$ only use $n$ variables, there exists a tuple $b_{<n} \in D_{\Phi,n}(M)$ such that $\varphi(a_{<n},b_{<n})$ and $\psi(b_{<n},c_{<n})$ hold.
  By \autoref{lem:DPhiQuantifier}, we may extend $b_{<n}$ to a sequence $b \in D_\Phi$ that enumerates $M$.
  Then $\tp(a,b) \in [\varphi]_\bG$, $\tp(b,c) \in [\psi]_\bG$, and their product is $\tp(a,c)$.

  The proof of the second, superficially more complex, case is essentially identical.
\end{proof}

Let $\tS_{k D_{\Phi,n}}(T)$ denote the space of $k$-types in $D_{\Phi,n}$.
We let $\pi = \pi_{k,n}\colon \tS_{k D_\Phi}(T) \rightarrow \tS_{k D_{\Phi,n}}(T)$ denote the natural projection $\tp(a,b,\ldots) \mapsto \tp(a_{<n},b_{<n},\ldots)$, and let $\pi_\bG = \pi_{k,n,\bG}\colon \bG^{[k]} \rightarrow \tS_{k D_{\Phi,n}}(T)$ denote its restriction to $\bG^{[k]}$.

\begin{lem}
  \label{lem:ClassicalReconstructionProjections}
  Let $k,n \in \bN$, $k \geq 1$.
  \begin{enumerate}
  \item The map $\pi\colon \tS_{k D_\Phi}(T) \rightarrow \tS_{k D_{\Phi,n}}(T)$ is continuous, closed, open and onto.
  \item We have $\pi(U) = \pi(U \cap \bG^{[k]})$ for every open $U \subseteq \tS_{k D_\Phi}(T)$.
  \item The restricted map $\pi_\bG\colon \bG^{[k]} \rightarrow \tS_{k D_{\Phi,n}}(T)$ is open and onto as well.
  \end{enumerate}
\end{lem}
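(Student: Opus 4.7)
The plan is to handle the three clauses in order, with (3) falling out of (1) and (2). For (1), continuity is trivial since a basic open set $[\psi(y^i_{<n} : i<k)]$ of $\tS_{kD_{\Phi,n}}(T)$ pulls back to the basic open set defined by the same formula. Surjectivity reduces to the following: given $(a^i_{<n} : i<k)$ realising $p \in \tS_{kD_{\Phi,n}}(T)$ in some model, pass to a countable elementary submodel containing all of the $a^i_{<n}$ and apply \autoref{lem:DPhiQuantifier} to extend each $a^i_{<n}$ to a full enumeration in $D_\Phi$. Closedness is then automatic, as $\tS_{kD_\Phi}(T)$ is a closed subset of the compact space $\tS_{k \times \bN}(T)$ and the target is Hausdorff. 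For openness, a basic open of $\tS_{kD_\Phi}(T)$ has the form $[\varphi] \cap \tS_{kD_\Phi}(T)$ with $\varphi = \varphi(x^i_{<m} : i<k)$, $m \geq n$; its image under $\pi$ is the set of $p$ such that
\begin{gather*}
(\exists x^0 \in D_\Phi)\cdots(\exists x^{k-1} \in D_\Phi) \bigl( \varphi \wedge \textstyle\bigwedge_i x^i_{<n} = y^i_{<n} \bigr)
\end{gather*}
holds of $(y^i_{<n})$, and by iterating the moreover clause of \autoref{lem:DPhiQuantifier} once per variable block this is an ordinary first-order formula, hence defines an open set.

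The substantive point is (2); the inclusion $\pi(U \cap \bG^{[k]}) \subseteq \pi(U)$ is trivial, and we may take $U = [\varphi]$ for $\varphi(x^i_{<m} : i<k)$. Let $p \in \pi(U)$, witnessed by $(a^i : i<k)$ in some (possibly large) model $M' \vDash T$ with each $a^i \in D_\Phi(M')$ and $\vDash \varphi(a^i_{<m})$. Apply downward Löwenheim--Skolem to obtain a countable $N \preceq M'$ containing all the $a^i_{<m}$. By elementarity, each $a^i_{<m}$ still lies in $D_{\Phi,m}(N)$, so \autoref{lem:DPhiQuantifier} lets us extend each $a^i_{<m}$ to $b^i \in D_\Phi(N)$ that enumerates $N$ itself. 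Now $b^0, \ldots, b^{k-1}$ all enumerate the same countable model $N$, so $\tp(b^0, \ldots, b^{k-1}) \in \bG^{[k]}$; since the first $m$ coordinates of each $b^i$ agree with those of $a^i$, the type lies in $[\varphi]$ and still projects to $p$.

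Finally, (3) is a direct consequence. Any open $V \subseteq \bG^{[k]}$ has the form $U \cap \bG^{[k]}$ with $U \subseteq \tS_{kD_\Phi}(T)$ open, and (2) combined with openness of $\pi$ gives $\pi_\bG(V) = \pi(U \cap \bG^{[k]}) = \pi(U)$, which is open. Surjectivity is the case $U = \tS_{kD_\Phi}(T)$ of (2), or equivalently the surjectivity argument in (1) carried out inside a single countable elementary submodel. The only real obstacle is the joint countable realisation in (2); this is where one needs simultaneously that $D_{\Phi,m}$-membership passes down to elementary submodels (so the lemma applies inside $N$) and that the lemma extends finite tuples in $D_{\Phi,m}(N)$ to full enumerations of $N$ in $D_\Phi$, both supplied by \autoref{lem:DPhiQuantifier}.
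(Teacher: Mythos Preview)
Your proof is correct and follows essentially the same route as the paper's: continuity and closedness from compactness, openness and surjectivity from the quantifier clause of \autoref{lem:DPhiQuantifier}, and for (2) extending the finite initial segments $a^i_{<m}$ to enumerations $b^i$ of a common countable model. The only cosmetic difference is that you explicitly invoke downward L\"owenheim--Skolem on the $a^i_{<m}$, whereas the paper simply takes a countable model containing all of the full tuples $a^i$; either variant works for the same reason.
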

\begin{proof}
  Continuity of $\pi$ (and therefore of $\pi_\bG$) is immediate, and together with compactness it implies that $\pi$ is closed.
  Openness of $\pi$ follows from the possibility to quantify (namely, \autoref{lem:DPhiQuantifier}): if $U = [\varphi] \subseteq \tS_{2 D_\Phi}(T)$ is a basic open set, then $\pi(U)$ is defined by the formula
  \begin{gather*}
    \psi(x_{<n},y_{<n}) = \bigl( \exists z,w \bigr) \bigl( \varphi(z,w) \wedge (x_{<n} = z_{<n}) \wedge (y_{<n} = w_{<n}) \bigr).
  \end{gather*}
  Onto follows from \autoref{lem:DPhiQuantifier}.

  Let $U \subseteq \tS_{k D_\Phi}(T)$ be open, and let $p = \tp(a^i : i < k) \in U$.
  Then there exists a formula $\varphi(x^i : i < k)$ such that $p \in [\varphi] \subseteq U$, and we may assume that that $\varphi$ only uses $m$ variables for some $m \geq n$.
  Let $M$ be a countable model containing all the $a^i$.
  By \autoref{lem:DPhiQuantifier}, there exist $b^i \in D_\Phi(M)$ that enumerate $M$, such that $b^i_{<m} = a^i_{<m}$.
  Then $q = \tp(b^i : i < k) \in [\varphi]_\bG \subseteq U \cap \bG^{[k]}$ and $\pi(p) = \pi(q) \in \pi_\bG(U \cap \bG^{[k]})$.

  It follows that $\pi_\bG$ is open and onto as well.
\end{proof}

Let $E^n(x,y)$ be the definable equivalence relation $x_{<n} = y_{<n}$ (where $x,y \in D_\Phi$).

\begin{lem}
  \label{lem:ClassicalReconstructionFormulaClopenEnInvariant}
  Let $n \geq 0$ and $k \geq 1$.
  Then the map
  \begin{gather*}
    \varphi(x^i : i < k) \mapsto [\varphi]_\bG
  \end{gather*}
  defines a bijection between formulas in $D_\Phi$ that only use $n$ variables (up to logical equivalence modulo $T$) and clopen subsets $X \subseteq \bG^{[k]}$ that are $[E^n]_\bG$-invariant, i.e., such that $X = X \cdot [E^n]_\bG^k$ (here $\cdot^k$ denotes Cartesian power).
\end{lem}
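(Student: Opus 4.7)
My plan is to verify well-definedness, injectivity (modulo equivalence on $D_{\Phi,n}$-tuples), and surjectivity in sequence.

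Well-definedness and injectivity should be largely bookkeeping. Clopenness of $[\varphi]_\bG$ follows from clopenness of $[\varphi]$ in the Stone space $\tS_{k D_\Phi}(T)$, and $[E^n]_\bG^k$-invariance follows because the right action of $(g^i) \in [E^n]_\bG^k$ on $\tp(a^i : i<k)$ replaces each $a^i$ by some $b^i \in D_\Phi$ enumerating the same model and agreeing with $a^i$ on the first $n$ coordinates, so the truth of $\varphi$ (which refers to those coordinates only) is preserved. For injectivity, $\varphi \mapsto [\varphi]_\bG$ is a Boolean homomorphism, so the issue reduces to the kernel: if $M \vDash \varphi(a^i_{<n})$ for some $(a^i_{<n}) \in D_{\Phi,n}(M)^k$, then by \autoref{lem:DPhiQuantifier} each $a^i_{<n}$ extends to some $a^i \in D_\Phi(M)$ enumerating $M$, producing $\tp(a^i : i<k) \in [\varphi]_\bG$ and hence a witness that $[\varphi]_\bG \neq \emptyset$.

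The heart of the argument is surjectivity. Every clopen $X \subseteq \bG^{[k]}$ has the form $X = [\psi]_\bG$ for some formula $\psi$, because $\bG^{[k]}$ is a closed subspace of the Stone space $\tS_{k D_\Phi}(T)$ and a zero-dimensional compactness argument separates $X$ from its complement by a clopen of the ambient space. After enlarging if necessary, I may assume $\psi$ uses $m \geq n$ variables. I then set
\begin{gather*}
  \varphi(y^i_{<n} : i<k) := (\exists x^i \in D_\Phi : i<k) \bigl( \bigwedge_{i<k}(x^i_{<n} = y^i_{<n}) \wedge \psi(x^i : i<k) \bigr),
\end{gather*}
which is a formula using only $n$ variables by \autoref{lem:DPhiQuantifier}. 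The inclusion $[\psi]_\bG \subseteq [\varphi]_\bG$ is immediate by taking $x^i := a^i$. Conversely, if $\tp(b^i : i<k) \in [\varphi]_\bG$, then all $b^i \in D_\Phi(M)$ enumerate a common model $M$, and by \autoref{lem:DPhiQuantifier} the existential witnesses $x^i$ may be chosen in $D_\Phi(M)$ enumerating the same $M$; then each $\tp(x^i, b^i) \in [E^n]_\bG$, and from $\tp(b^i : i<k) = \tp(x^i : i<k) \cdot (\tp(x^i, b^i))_{i<k}$ together with the $[E^n]_\bG^k$-invariance of $X = [\psi]_\bG$, I conclude $\tp(b^i : i<k) \in X$.

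The one delicate point is ensuring that the existential witnesses $x^i$ live \emph{inside the same model as the free variables} $b^i$. This is exactly what the ``moreover'' part of \autoref{lem:DPhiQuantifier} guarantees, and it is what makes the $[E^n]_\bG^k$-action -- which by definition lives strictly within a fixed model -- strong enough to close the gap between $[\psi]_\bG$ and $[\varphi]_\bG$. Without this co-location of witnesses, the invariance hypothesis alone would not suffice.
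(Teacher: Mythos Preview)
Your surjectivity argument contains a genuine gap. The claim that $\bG^{[k]}$ is a closed subspace of $\tS_{kD_\Phi}(T)$ is false: the condition that the $a^i$ enumerate the same model is $\bigcap_{i,j,n}\bigcup_m [a^i_n = a^j_m]$, which is $G_\delta$ but not closed. In fact $\bG^{[k]}$ is dense in $\tS_{kD_\Phi}(T)$ (this is exactly how the paper proves injectivity), so if it were closed it would be the whole space. Since $\bG^{[k]}$ is not compact, a clopen $X\subseteq\bG^{[k]}$ and its complement need not be compact, and no normality/zero-dimensionality argument separates them by a clopen of the ambient Stone space.

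Worse, your intermediate conclusion --- that \emph{every} clopen $X\subseteq\bG^{[k]}$ equals $[\psi]_\bG$ for some formula $\psi$ --- is itself false. The sets $[\psi]_\bG$ form a countable Boolean algebra, whereas $\bG^{[k]}$, being a non-compact zero-dimensional Polish space, admits an infinite clopen partition and hence continuum-many clopen subsets. So the $[E^n]_\bG$-invariance is not a bookkeeping condition to be invoked at the end; it is precisely what singles out the countably many clopen sets that come from formulas.

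The paper's route uses invariance from the start. One projects via $\pi_\bG\colon\bG^{[k]}\to\tS_{kD_{\Phi,n}}(T)$ to a genuinely compact space and shows, using $[E^n]_\bG$-invariance, that $\pi_\bG(X)$ and $\pi_\bG(\bG^{[k]}\setminus X)$ are disjoint; since $\pi_\bG$ is open and surjective (\autoref{lem:ClassicalReconstructionProjections}), $\pi_\bG(X)$ is clopen and hence equals $[\varphi]$ for a formula $\varphi$ in $n$ variables, whence $X=[\varphi]_\bG$. Your candidate $\varphi$ (the $D_\Phi$-existential projection of a hypothetical $\psi$) is morally the same object, but you cannot produce it by first writing $X=[\psi]_\bG$; you must obtain it by descent to $\tS_{kD_{\Phi,n}}(T)$.
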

\begin{proof}
  Assume first that $\varphi(x^i : i < k)$ only uses $n$ variables, and let $X = [\varphi]_\bG$.
  Then it is clearly clopen in $\bG^{[k]}$, and it is $[E^n]_\bG$-invariant by \autoref{lem:ClassicalReconstructionProductOfFormulas}.
  It follows from \autoref{lem:DPhiQuantifier} that $\bG^{[[k]]}$ is dense in in $\tS_{k D_\Phi}(T)$.
  This implies in turn that if $[\varphi]_\bG = [\varphi']_\bG$, then $\varphi$ and $\varphi'$ must be equivalent modulo $T$.

  To see that the map is onto, let $X \subseteq \bG^{[k]}$ be clopen and $[E^n]_\bG$-invariant.
  Consider the map $\pi_\bG\colon \bG^{[k]} \rightarrow \tS_{k D_{\Phi,n}}(T)$, and let us prove that that $\pi_\bG(X) \cap \pi_\bG(\bG^{[k]} \setminus X) = \emptyset$.
  Indeed, assume that $p \in X$ and $q \in \bG^{[k]} \setminus X$ have the same image $\pi_\bG(p) = \pi_\bG(q)$.
  We may write $p = \tp(a^i : i < k)$ and $q = \tp(b^i : i < k)$, where the $a^i$ enumerate some model, and the $b^i$ enumerate another.
  The hypothesis $\pi_\bG(p) = \pi_\bG(q)$ means that $(a^i_{<n} : i < k) \equiv (b^i_{<n} : i < k)$, and we may assume that equality holds: $a^i_{<n} = b^i_{<n}$ for all $i < k$.
  Let $M$ be a countable model containing everything.

  Since $X$ is open, there exists a formula $\psi$ such that $p \in [\psi]_\bG \subseteq X$.
  Similarly, $X$ is closed, so there exists a formula $\chi$ such that $q \in [\chi] \subseteq X$, and we may assume that both $\psi$ and $\chi$ only use $m$ variables for some $m \geq n$.
  By \autoref{lem:DPhiQuantifier}, as usual, we may find $c^i$ and $d^i$ that enumerate $M$, such that $c^i_{<m} = a^i_{<m}$ and $d^i_{<m} = b^i_{<m}$.
  Let
  \begin{gather*}
    p' = \tp(c^i : i < k) \in [\psi]_\bG \subseteq X,
    \qquad
    q' = \tp(d^i : i < k) \in [\chi]_\bG \subseteq \bG^{[k]} \setminus X,
    \qquad
    g^i = \tp(c^i,d^i) \in [E^n]_\bG \subseteq \bG.
  \end{gather*}
  Then
  \begin{gather*}
    q' = p' \cdot (g^i : i < k) \in X \cdot [E^n]_\bG^k = X,
  \end{gather*}
  a contradiction.

  Thus, we have indeed proved that $\pi_\bG(X) \cap \pi_\bG(\bG^{[k]} \setminus X) = \emptyset$.
  Since $\pi_\bG$ is onto and open, it follows that $\pi_\bG(X)$ is clopen in $\tS_{k D_{\Phi,n}}(T)$.
  It is therefore defined by some formula $\varphi(x^i_{<n} : i < k)$.
  But then the same formula, with added dummy variables, defines $X$ in $\bG$, concluding the proof.
\end{proof}

The last technical step is to get rid of the hypothesis involving $E^n$ in \autoref{lem:ClassicalReconstructionFormulaClopenEnInvariant}.
Let $\sH$ denote the collection of clopen sub-groupoids of $\bG$ that contain $\bB$:
\begin{gather*}
  \sH = \bigl\{ \bH \subseteq \bG \ \text{clopen} : \bH = \bH \bH^{-1} \supseteq \bB \bigr\}.
\end{gather*}

\begin{lem}
  \label{lem:ClassicalReconstructionSubGroupoidContainsEn}
  Every $\bH \in \sH$ contains $[E^n]_\bG$ for some $n$.
\end{lem}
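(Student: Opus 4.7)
The plan is to exploit the compactness of the base $\bB = \bB_\Phi(T)$ (the Cantor set, by \autoref{lem:ClassicalGT}) together with the fact that every basic open set of $\bG$ is cut out by a formula mentioning only finitely many coordinates of $x$ and $y$.

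First, for each $e \in \bB$, since $\bH$ is open and $e \in \bH$, I can find a basic clopen neighbourhood $[\varphi_e]_\bG \subseteq \bH$ with $e \in [\varphi_e]_\bG$. By compactness of $\bB$, finitely many such neighbourhoods suffice to cover $\bB$, say $\bB \subseteq [\varphi_{e_1}]_\bG \cup \cdots \cup [\varphi_{e_m}]_\bG$. Setting $\varphi = \bigvee_{i=1}^m \varphi_{e_i}$, we have $\bB \subseteq [\varphi]_\bG \subseteq \bH$. Choose $n$ large enough that $\varphi$ only mentions the variables $x_{<n}$ and $y_{<n}$.

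The key step is then to verify $[E^n]_\bG \subseteq [\varphi]_\bG$. Let $g = \tp(a,b) \in [E^n]_\bG$, so $a_{<n} = b_{<n}$, and the identity type $\tp(a,a)$ lies in $\bB \subseteq [\varphi]_\bG$. Hence $\models \varphi(a_{<n}, a_{<n})$, and the substitution $b_{<n} = a_{<n}$ turns this into $\models \varphi(a_{<n}, b_{<n})$, whence $g \in [\varphi]_\bG \subseteq \bH$.

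The main technical input is the compactness of $\bB$, which lets one pass from a pointwise covering by basic opens to a single formula $\varphi$ of bounded variable complexity. Once this is in hand, the blindness of $\varphi$ to coordinates past index $n$ forces $[E^n]_\bG \subseteq [\varphi]_\bG$ essentially for free. It is perhaps worth noting that the sub-groupoid condition $\bH = \bH\bH^{-1}$ in the definition of $\sH$ plays no role in the argument; only the openness of $\bH$ and the inclusion $\bB \subseteq \bH$ are used.
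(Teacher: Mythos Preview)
Your proof is correct and follows essentially the same approach as the paper: cover $\bB$ by finitely many basic clopen neighbourhoods contained in $\bH$ using compactness, then exploit that the resulting formula only mentions finitely many coordinates to conclude $[E^n]_\bG \subseteq \bH$. The paper normalises each $\varphi_e$ to the form $\varphi_e(x,x) \wedge E^{n_e}(x,y)$ \emph{before} passing to a finite subcover, whereas you take the disjunction first and pick $n$ afterwards, but this is a cosmetic reordering of the same argument; your observation that the condition $\bH = \bH\bH^{-1}$ is not used is also correct.
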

\begin{proof}
  Let $\bH \in \sH$.
  If $e \in \bB$, then $e \in \bH$, so $\bH$ contains a basic neighbourhood of $e$, i.e., one of the form $[\varphi]_\bG$.
  If $e = \tp(a)$, then $\varphi(a,a)$ must hold.
  If $\varphi$ only uses $n$ variables, then we may replace it with $\varphi(x,x) \wedge E^n(x,y)$.

  In other words, for each $e \in \bB$ there exist a formula $\varphi_e(x)$ and $n_e \in \bN$ such that
  \begin{gather*}
    e \in U_e = [ \varphi_e \wedge E^{n_e} ]_\bG \subseteq \bH.
  \end{gather*}
  By compactness, there is a finite family $e_i$ for $i < m$ such that $\bB \subseteq \bigcup_{i<m} U_{e_i}$.
  Then $\bB \subseteq \bigcup [\varphi_{e_i}]$, so
  \begin{gather*}
    [E^n]_\bG = \bigcup_{i<m} [\varphi_{e_i} \wedge E^n]_\bG \subseteq \bigcup_{i<m} U_{e_i} \subseteq \bH
  \end{gather*}
  where $n = \max n_{e_i}$.
\end{proof}

We can now reconstruct $T$ from $\bG$.
For this we need to recover
\begin{itemize}
\item the sorts of $T$, and
\item the formulas (definable subsets) on each finite product of sorts.
\end{itemize}

By \emph{sort} we mean any interpretable sort, as in the discussion following \autoref{lem:ClassicalGT}: indeed, we have no way to distinguish the basic sorts from the interpretable ones.
It follows from \autoref{prp:DPhiUnique} that any such sort is of the form (i.e., in definable bijection with) $D_{\Phi,n}/E$, for some $n$ and some definable equivalence relation $E$.
With some abuse of notation, we may even write it as $D_\Phi/E$, where $E(x,y)$ is again a definable relation in which only $x_{<n}$ and $y_{<n}$ actually appear.
The relation $E^n(x,y)$ which we defined earlier as $x_{<n} = y_{<n}$ is a definable equivalence relation, and any other definable equivalence relation on $D_\Phi$ coarsens of $E^n$ for some $n$.

\begin{lem}
  \label{lem:ClassicalReconstructionSorts}
  The map $E \mapsto [E]_\bG$ defines a bijection between definable equivalence relations on $D_\Phi$ and $\sH$.
  In addition, if $\bH = [E]_\bG$, $a \in D_\Phi$ enumerates $M$ and $e = \tp(a)$, then the map $\tp(a,b) \mapsto [b]_E$ (the $E$-class of $b$) is a bijection between the set
  \begin{gather*}
    e \bG / \bH = \{g \bH : t_g = e \}
  \end{gather*}
  and the sort $D_\Phi/E$ in $M$.
\end{lem}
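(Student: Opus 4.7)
The plan is to combine \autoref{lem:ClassicalReconstructionFormulaClopenEnInvariant} and \autoref{lem:ClassicalReconstructionSubGroupoidContainsEn} to establish the bijection $E \leftrightarrow [E]_\bG$, and then verify the coset identification directly.

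For the forward direction, I would first check that $[E]_\bG \in \sH$ whenever $E$ is a definable equivalence relation on $D_\Phi$: the set is clopen by definability of $E$, contains $\bB$ by reflexivity, and is closed under inversion and composition by symmetry, transitivity, and \autoref{lem:ClassicalReconstructionProductOfFormulas}. Injectivity of $E \mapsto [E]_\bG$ is then immediate from \autoref{lem:ClassicalReconstructionFormulaClopenEnInvariant} applied with $k = 2$ and $n$ large enough that both candidate formulas only involve the first $n$ variables.

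For surjectivity, given $\bH \in \sH$, I would use \autoref{lem:ClassicalReconstructionSubGroupoidContainsEn} to produce some $n$ with $[E^n]_\bG \subseteq \bH$, and observe that
\begin{gather*}
  [E^n]_\bG \cdot \bH \cdot [E^n]_\bG = \bH,
\end{gather*}
where the forward inclusion uses that $\bH$ is a sub-groupoid containing $[E^n]_\bG$, and the reverse uses $\bB \subseteq [E^n]_\bG$. I expect this to be the main (albeit minor) obstacle: one has to recognise that, under the identification $\bG \cong \bG^{[2]}$, the right action $\bG^{[2]} \curvearrowleft \bG^2$ is given by $h \cdot (g_0, g_1) = g_0^{-1} h g_1$, so the above equality says precisely that $\bH \cdot [E^n]_\bG^2 = \bH$, which is the invariance condition of \autoref{lem:ClassicalReconstructionFormulaClopenEnInvariant}. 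The lemma then yields a formula $E(x,y)$ (using only $n$ variables) with $\bH = [E]_\bG$, and reflexivity, symmetry, and transitivity of $E$ on $D_\Phi$ follow respectively from $\bB \subseteq \bH$, $\bH^{-1} = \bH$, and $\bH \bH \subseteq \bH$ together with \autoref{lem:ClassicalReconstructionProductOfFormulas}, each translated to the formula level by invoking the injectivity already in hand.

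For the second half, fix $a \in D_\Phi$ enumerating $M$, set $e = \tp(a)$ and $\bH = [E]_\bG$. Since any element of $\bG$ is the $2$-type of a pair of enumerations of a single countable model, the fibre $\{g \in \bG : t_g = e\}$ consists exactly of the types $\tp(a,b)$ with $b \in D_\Phi(M)$. Two such types lie in the same right $\bH$-coset iff $\tp(a,b)^{-1} \tp(a,b') = \tp(b,b')$ belongs to $[E]_\bG$, iff $E(b,b')$ holds, so the assignment $\tp(a,b) \mapsto [b]_E$ descends to a well-defined bijection between $e\bG/\bH$ and $D_\Phi(M)/E$, which is exactly the sort $D_\Phi/E$ interpreted in $M$.
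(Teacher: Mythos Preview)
Your argument is correct and follows essentially the same route as the paper: both directions of the bijection come from \autoref{lem:ClassicalReconstructionSubGroupoidContainsEn} and \autoref{lem:ClassicalReconstructionFormulaClopenEnInvariant} together with \autoref{lem:ClassicalReconstructionProductOfFormulas}, and the coset identification is checked exactly as you propose. One small imprecision: the fibre $\{g \in \bG : t_g = e\}$ does \emph{not} consist of all $\tp(a,b)$ with $b \in D_\Phi(M)$, but only of those where $b$ actually enumerates $M$; hence surjectivity of $\tp(a,b)\bH \mapsto [b]_E$ onto $D_\Phi(M)/E$ is not automatic and requires \autoref{lem:DPhiQuantifier} (given $c \in D_\Phi(M)$, extend $c_{<n}$ to an enumeration of $M$, where $E$ only uses $n$ variables). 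The paper makes this step explicit; once you add it, your proof is complete.
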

\begin{proof}
  If $E$ is a equivalence relation on $D_\Phi$ and $\bH = [E]_\bG$, then it is easy to check that $\bH \in \sH$: in particular, $\bH \bH = \bH$ by \autoref{lem:ClassicalReconstructionProductOfFormulas}.
  Conversely, if $\bH \in \sH$, then by \autoref{lem:ClassicalReconstructionSubGroupoidContainsEn} and \autoref{lem:ClassicalReconstructionFormulaClopenEnInvariant} it is of the form $[E]_\bG$ for a unique formula $E(x,y)$.
  By the same reasoning, $E$ defines an equivalence relation: it is reflexive since $\bB \subseteq \bH$; it is symmetric since $\bH = \bH^{-1}$; and it is transitive since $\bH = \bH \bH$, using \autoref{lem:ClassicalReconstructionProductOfFormulas}.

  For the second part, if $a \in D_\Phi$ is a fixed enumeration of $M$, then any $g = \tp(a,b) \in \bG$ determines $b$, and $[b]_E \in D_\Phi/E$ in $M$.
  By \autoref{lem:DPhiQuantifier}, as usual, every member of $D_\Phi/E$ in $M$ is of this form.
  Finally, if $h = \tp(a,c) \in \bG$, then:
  \begin{gather*}
    [b]_E = [c]_E
    \quad \Longleftrightarrow \quad
    E(b,c)
    \quad \Longleftrightarrow \quad
    g^{-1} h = \tp(b,c) \in \bH
    \quad \Longleftrightarrow \quad
    g \bH = h \bH.
  \end{gather*}
  Therefore the map $g \bH \mapsto [b]_E$ is injective, completing the proof.
\end{proof}

Now that we have recovered the sorts, we may recover formulas.
Let $E_i$ be definable equivalence relations on $D_\Phi$ for $i < k$, and let $\bH_i = [E_i]_\bG$.

Say that a formula $\varphi(x^i : i < k)$ with $x^i \in D_\Phi$ is \emph{$E$-invariant} if it is $E_i$-invariant in each $x^i$.
Such a formula contains the exact same information as a formula $\tilde{\varphi}(\tilde{x}^i : i < k)$, with $\tilde{x}^i \in D_\Phi/E_i$, one being the pull-back of the other.
In this case, the set $[\varphi]_\bG \subseteq \bG^{[k]}$ is clopen and \emph{$\bH$-invariant}, that is to say that
\begin{gather*}
  [\varphi]_\bG = [\varphi]_\bG \cdot (\bH_0 \times \cdots \times \bH_{k-1}).
\end{gather*}

\begin{lem}
  \label{lem:ClassicalReconstructionFormulas}
  The map $\varphi \mapsto [\varphi]_\bG$ defines a bijection between $E$-invariant formulas (equivalently, formulas in the sorts $D_\Phi/E_0 \times \cdots \times D_\Phi/E_{k-1}$), up to logical equivalence modulo $T$, and $\bH$-invariant clopen subsets of $\bG^{[k]}$.

  Moreover, assume that $a \in D_\Phi$ enumerates a model $M$, let $e = \tp(a)$, and let us identify $D_\Phi/E_i$ in $M$ with $e \bG / \bH_i$ as per \autoref{lem:ClassicalReconstructionSorts}.
  In other words, a member $\tilde{b}_i$ of $D_\Phi/E_i$ is identified with $g_i \bH_i$, where $g_i = \tp(a,b_i)$ and $\tilde{b}_i = [b_i]_{E_i}$.
  Then $\tp(b_i : i < k) \in \bG^{[k]}$, and
  \begin{gather*}
    \varphi(\tilde{b}_i : i < k)
    \quad \Longleftrightarrow \quad
    \tp(b_i : i < k) \in [\varphi]_\bG.
  \end{gather*}
\end{lem}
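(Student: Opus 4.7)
The plan is to bootstrap from the two preceding lemmas: Lemma~\ref{lem:ClassicalReconstructionFormulaClopenEnInvariant} already handles the case where the $E_i$ are the tautological relations $E^n$, and Lemma~\ref{lem:ClassicalReconstructionSubGroupoidContainsEn} lets us bound how coarse the $\bH_i$ can fail to be in terms of some $E^n$.

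First, for the map to be well-defined, I take an $E$-invariant formula $\varphi(x^i : i<k)$. Assuming $\varphi$ only uses $n$ variables and that $n$ is also large enough that each $E_i$ coarsens $E^n$, the set $[\varphi]_\bG$ is clopen by Lemma~\ref{lem:ClassicalReconstructionFormulaClopenEnInvariant}. To see that it is $\bH$-invariant, I use Lemma~\ref{lem:ClassicalReconstructionProductOfFormulas}: the action of $\bH_i = [E_i]_\bG$ in the $i$-th coordinate corresponds to the formula $(\exists z)(\varphi \wedge E_i(x^i, z))$, which is equivalent to $\varphi$ by $E_i$-invariance. Injectivity is already given in Lemma~\ref{lem:ClassicalReconstructionFormulaClopenEnInvariant} (from the density of $\bG^{[k]}$ in $\tS_{kD_\Phi}(T)$).

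For surjectivity, let $X \subseteq \bG^{[k]}$ be clopen and $\bH$-invariant. By Lemma~\ref{lem:ClassicalReconstructionSubGroupoidContainsEn}, each $\bH_i$ contains some $[E^{n_i}]_\bG$; taking $n = \max_i n_i$, the set $X$ is in particular $[E^n]_\bG$-invariant in every coordinate, so Lemma~\ref{lem:ClassicalReconstructionFormulaClopenEnInvariant} yields a formula $\varphi$ (using $n$ variables) with $X = [\varphi]_\bG$. It remains to check $\varphi$ is $E_i$-invariant. Given any $a^j \in D_\Phi$ ($j<k$) enumerating a common model with $\varphi(a^j : j<k)$, and any $b^i$ with $E_i(a^i, b^i)$, use Lemma~\ref{lem:DPhiQuantifier} to extend $b^i$ to a member of $D_\Phi$ enumerating the same model, so $\tp(a^i, b^i) \in [E_i]_\bG = \bH_i$. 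Then $\bH$-invariance of $X$ gives $\tp(\ldots, b^i, \ldots) \in X$, hence $\varphi(\ldots, b^i, \ldots)$, as desired.

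The moreover part is essentially unwinding: with $a$ enumerating $M$, each $b_i$ determined by $g_i = \tp(a, b_i) \in \bG$ also enumerates $M$, so $\tp(b_i : i<k) \in \bG^{[k]}$ by the description of $\bG^{[k]}$ as types of tuples of enumerations of a common model. The equivalence $\varphi(\tilde{b}_i : i<k) \Leftrightarrow \tp(b_i : i<k) \in [\varphi]_\bG$ then reduces to the definition of $[\varphi]_\bG$ together with the $E$-invariance of $\varphi$, which ensures that $\varphi(\tilde{b}_i)$ (i.e., $\tilde{\varphi}$ evaluated on the $E_i$-classes) holds iff $\varphi(b_i)$ does. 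The one thing to verify carefully is that the identification $\tilde{b}_i \leftrightarrow g_i \bH_i$ of Lemma~\ref{lem:ClassicalReconstructionSorts} is compatible in each coordinate independently, which is immediate since $\bH$-invariance is coordinate-wise. The only mildly delicate point in the whole argument is the passage from $\bH$-invariance of $X$ to $E_i$-invariance of $\varphi$, where one must ensure the witness $b^i$ can be taken in $D_\Phi$; this is exactly what Lemma~\ref{lem:DPhiQuantifier} is for.
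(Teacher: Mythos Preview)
Your proof is correct and follows the same overall route as the paper. Two minor points where the paper is a bit more direct: first, since the $E_i$ are already given as definable equivalence relations, one may simply take $n$ large enough that each $E_i$ only uses $n$ variables, which immediately gives $[E^n]_\bG \subseteq [E_i]_\bG = \bH_i$ without invoking \autoref{lem:ClassicalReconstructionSubGroupoidContainsEn}; second, for the $E$-invariance of $\varphi$ the paper appeals to \autoref{lem:ClassicalReconstructionProductOfFormulas} (the $\bH_i$-invariance of $X=[\varphi]_\bG$ translates into $[\varphi]_\bG = [\psi]_\bG$ for the $E_i$-saturation $\psi$ of $\varphi$, and then injectivity gives $\varphi\equiv\psi$), rather than your direct semantic argument. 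Your argument is fine, but as written it only treats tuples $(a^j)$ that enumerate a common model; you should add the one-line observation that this is without loss of generality, since $\varphi$ and the $E_i$ only use $n$ variables and any member of $D_{\Phi,n}$ extends to an enumeration of a common countable model via \autoref{lem:DPhiQuantifier}.
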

\begin{proof}
  We have already observed that $[\varphi]_\bG$ is a clopen $\bH$-invariant set.
  For the converse direction, let $n$ be large enough that each $E_i$ only uses $n$ variables, and let $X \subseteq \bG^{[k]}$ be clopen and $\bH$-invariant.
  Then $X$ is also $[E^n]_\bG$-invariant, and therefore of the form $[\varphi]_\bG$ for a unique formula $\varphi(x^i : i < k)$, by \autoref{lem:ClassicalReconstructionFormulaClopenEnInvariant}.
  By \autoref{lem:ClassicalReconstructionProductOfFormulas}, $\varphi$ must be $E$-invariant.
  The moreover part is tautological.
\end{proof}

Together, \autoref{lem:ClassicalReconstructionSorts} and \autoref{lem:ClassicalReconstructionFormulas} tell us how to recover sorts, formulas, and their interpretations in countable models.

\begin{dfn}
  \label{dfn:ClassicalReconstruction}
  Let $\bG$ be an open groupoid over $\bB$, and let $\sH$ be the collection of clopen sub-groupoids of $\bG$ that contain $\bB$.
  Define a language $\cL(\bG)$ as follows:
  \begin{enumerate}
  \item It has one sort $D_\bH$ for each $\bH \in \sH$.
  \item It has a predicate symbol $P_X$ in the sorts $D_\bH = (D_{\bH_i} : i < k)$ for each sequence $\bH = (\bH_i : i < k)$ of such sub-groupoids and clopen, $\bH$-invariant $X \subseteq \bG^{[k]}$.
  \end{enumerate}

  For each $e \in \bB$ we define an $\cL(\bG)$-structure $M_e$.
  We interpret each sort $D_\bH$ as $e \bG/\bH = \{ g\bH : t_g = e\}$, and each predicate symbol $P_X$ as
  \begin{gather*}
    \bigl\{ (g_i \bH_i : i < k) : \bG (e , g) \in X \bigr\}.
  \end{gather*}
  Finally, we define $T(\bG)$ to be the $\cL(\bG)$-theory of the family $\{M_e : e \in \bB\}$.
\end{dfn}

The we have proven:

\begin{thm}
  \label{thm:ClassicalReconstruction}
  Let $T$ be a classical theory and $\bG = \bG(T)$.
  Then $T(\bG)$ is bi-interpretable with $T$.
  Up to a change of language, its sorts consist of all interpretable sorts in $T$, with the full induced structure.

  In particular, if $T'$ is another theory and $\bG(T) \cong \bG(T')$, then $T$ and $T'$ are bi-interpretable.
\end{thm}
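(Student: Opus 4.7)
The plan is to read Theorem \ref{thm:ClassicalReconstruction} directly off the two main lemmas of the section. Lemma \ref{lem:ClassicalReconstructionSorts} already identifies, canonically, the sorts of $\cL(\bG)$ with the interpretable sorts of $T$ (both indexed by definable equivalence relations on $D_\Phi$), and Lemma \ref{lem:ClassicalReconstructionFormulas} identifies, canonically, the predicate symbols of $\cL(\bG)$ with formulas on finite products of those sorts. So essentially nothing new has to be proved; everything reduces to checking that these identifications are compatible with the definition of $T(\bG)$ and then quoting the standard reformulation of bi-interpretability.

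First I would fix $e = \tp(a) \in \bB$ and check that the $\cL(\bG)$-structure $M_e$ of Definition \ref{dfn:ClassicalReconstruction} agrees sort-by-sort and predicate-by-predicate with the model of $T$ enumerated by $a$, in the enriched language that names every interpretable sort and every formula. The sort level is the content of Lemma \ref{lem:ClassicalReconstructionSorts}: the map $\tp(a,b) \mapsto [b]_E$ is a bijection between $e\bG/\bH$ and $D_\Phi/E$ in that model, where $\bH = [E]_\bG$. The predicate level is the \emph{moreover} part of Lemma \ref{lem:ClassicalReconstructionFormulas}: under the identification $\tilde{b}_i \leftrightarrow g_i \bH_i$, the set defining $P_X$ in $M_e$ is exactly $\{(\tilde{b}_i) : \varphi(\tilde{b}_i)\}$ for the unique $E$-invariant formula $\varphi$ with $X = [\varphi]_\bG$. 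By Proposition \ref{prp:DPhiUnique}, every interpretable sort of $T$ is in definable bijection with some $D_\Phi/E$, so no sort (and hence, by Lemma \ref{lem:ClassicalReconstructionFormulas}, no formula) is missed.

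Since this correspondence holds for every $e \in \bB$ uniformly, the common $\cL(\bG)$-theory of the family $\{M_e : e \in \bB\}$, that is $T(\bG)$, is exactly (up to renaming of symbols) the theory of $T$ with all interpretable sorts adjoined and all definable predicates named. Now I would apply the reformulation of bi-interpretability recalled just before Theorem \ref{thm:GTWellDefined}: adjoining all sorts of $T$ (with full induced structure) to itself and then renaming yields precisely $T(\bG)$, so $T$ and $T(\bG)$ are bi-interpretable, and this justifies the second sentence of the theorem. The final ``in particular'' clause is then formal: Definition \ref{dfn:ClassicalReconstruction} constructs $T(\bG)$ purely from the topological groupoid $\bG$, so an isomorphism of topological groupoids $\bG(T) \cong \bG(T')$ induces an isomorphism of theories $T(\bG(T)) \cong T(\bG(T'))$, and then $T \sim T(\bG(T)) \cong T(\bG(T')) \sim T'$ by transitivity of bi-interpretation.

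The main obstacle, such as it is, is bookkeeping rather than substance: one must check that $T(\bG)$ is genuinely complete (equivalently, that all the $M_e$ have the same $\cL(\bG)$-theory), and that the bi-interpretation reformulation used above really does apply here. The first is immediate from the fact that each $P_X$ is interpreted uniformly by the $\cL$-formula $\varphi$ with $X = [\varphi]_\bG$, which is the same in every $M_e$ because $T$ is complete. The second requires only that the enrichment is conservative in both directions, which is exactly what Lemmas \ref{lem:ClassicalReconstructionSorts} and \ref{lem:ClassicalReconstructionFormulas} assert.
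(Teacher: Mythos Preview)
Your proposal is correct and matches the paper's approach exactly: the paper states ``The[n] we have proven:'' immediately before the theorem, treating it as a direct consequence of Lemmas \ref{lem:ClassicalReconstructionSorts} and \ref{lem:ClassicalReconstructionFormulas} together with Definition \ref{dfn:ClassicalReconstruction}, which is precisely the argument you spell out. If anything, you are more explicit than the paper about why $T(\bG)$ is complete and why the bi-interpretation reformulation applies.
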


\section{Universal Skolem sorts}
\label{sec:UniversalSkolemSort}

So far we have only treated the case of a theory in classical logic, even though the correspondence between $\aleph_0$-categorical theories and their automorphism groups, which we seek to generalise, also applies in continuous logic (see \cite{BenYaacov-Kaichouh:Reconstruction}).
Since we do not see how to generalise the construction of $D_\Phi$ to continuous logic, we shall follow here a different, more ``axiomatic'' path.

Throughout we work in the context of a complete theory $T$ in a countable language, in the sense of continuous logic.
By \emph{definable} (map, set, etc.) we always mean without parameters, unless explicitly said otherwise.
A \emph{sort} is any definable subset of an imaginary sort.
More precisely, the family of all metric sorts is generated by closing the basic sort(s) (i.e., those named in the language) under the following operations:
\begin{itemize}
\item \emph{Infinite product:} if $D_n$ is a sort for each $n$, then so is $\prod D_n$, equipped with any definable distance, say $d(x,y) = \sup_n 2^{-n} \wedge d(x_n,y_n)$.
  Formulas on an infinite product sort (i.e., with a variable in such a sort, and possibly other variables) are formulas on finite sub-products, as well as the uniform limits (so the proposed distance is indeed definable).
\item \emph{Metric quotient:} If $D$ is a sort and $d'$ is a definable pseudo-distance on $D$, then $D' = (D,d')$, obtained by dividing out the induced equivalence relation, is a sort as well.
  Notice that when applied to a structure that is not $\aleph_1$-saturated, one may also need to pass to the completion.
  Formulas on $D'$ are formulas on $D$ which are uniformly continuous with respect to $d'$.
  If $\varphi(x,y)$ is any formula with $x \in D$, then $\inf_{x'} \varphi(x',y) + N d'(x,x')$ ($N \in \bN$) is uniformly continuous (even Lipschitz) with respect to $d'$, and formulas obtained in this fashion are dense among all formulas in $D'$.
\item \emph{Subset:} If $D$ is a sort and $E \subseteq D$ is a definable subset, then $E$ is a sort as well.
  Formulas on $E$ are restrictions of formulas on $D$.
  Recall that $E \subseteq D$ is a definable set if the distance to $E$ is definable in $D$, or equivalently, if for every formula $\varphi(x,y)$, where $x \in D$, the expression $\psi(y) = \inf_{x \in E} \, \varphi(x,y)$ is again a formula.
\end{itemize}

By an easy compactness argument, any two definable distances on a sort are uniformly equivalent.
By the characterisation through quantifiers, the notion of a definable subset does not depend on the choice of a definable distance.
In addition, if $D$ is a sort $E \subseteq D$ is a definable subset, then any definable distance on $E$ extends to a definable pseudo-distance on $D$.
It follows that up to a definable isometric bijection, any sort, equipped with any definable distance, is a definable subset of a metric quotient of a product of the generating sorts.

Let us be given a theory $T$ in a language $\cL$, together with a family of sorts as defined above.
Let $\cL'$ extend $\cL$ with new basic sorts for the desired family of sorts, as well as new predicate symbols for formulas on any product of sorts (possibly restricting to a dense family of formulas).
Then there exists a unique theory $T'$ extending $T$ which says that the new basic sorts and new symbols interpret the desired sorts and formulas on them.
This adds no new additional structure on the original sorts (i.e., every formula is equivalent modulo $T'$ to an $\cL$-formula), and each of the new basic sorts admits a canonical definable bijection with the corresponding subset-of-quotient-of-product.

\begin{conv}
  Throughout, inequalities are interpreted with a universal quantifier in the context of a given theory $T$, so for example, $\inf_y \varphi(x,y) \leq r$ means that the sentence $\sup_x \inf_y \varphi(x,y) \leq r$ is a consequence of $T$.
\end{conv}

\begin{dfn}
  \label{dfn:SkolemMap}
  Let $D$ and $E$ be sorts, $\varphi(x,y)$ be a formula in $D \times E$, and $\varepsilon > 0$.
  An \emph{$\varepsilon$-Skolem map} for $\varphi$ is a definable map $\sigma\colon D \rightarrow E$ satisfying $\varphi(x,\sigma x) \leq \inf_y \varphi(x,y) + \varepsilon$.
\end{dfn}

One of the obstacles in continuous logic is that in general, one cannot name new Skolem maps in the language: there is no natural continuity modulus for such a map, and one can even construct examples where any such map would have to be discontinuous.

\begin{dfn}
  \label{dfn:UniversalSkolemSort}
  Let $D$ and $E$ be sorts.
  \begin{enumerate}
  \item We say that $D$ is a \emph{Skolem sort} for $E$ if every formula $\varphi(x,y)$ in $D \times E$ admits $\varepsilon$-Skolem maps for every $\varepsilon > 0$.
  \item We say that $D$ is \emph{universal} for $E$ if for every $\varepsilon > 0$ there exists a definable map $\sigma\colon D \rightarrow E$ such that the image of any $\varepsilon$-ball in $D$ is $\varepsilon$-dense in $E$.
  \end{enumerate}
  We say that $D$ is a \emph{Skolem} (\emph{universal}) sort if it is for every sort $E$.
\end{dfn}

If $\varphi(x,y)$ is any formula in $D \times E$, then it has the same Skolem maps as $\varphi(x,y) - \inf_z \varphi(x,z)$.
Therefore, we may restrict our attention to formulas satisfying $\inf_y \varphi = 0$.
It is also sufficient to test for existence of Skolem maps for a dense family of formulas in $D \times E$.
Combining the two observations, it suffices to test the existence of Skolem map on a dense subset of $\{\varphi : \inf_y \varphi = 0\}$.

Since any two definable distance on $E$ are uniformly equivalent, universality does not depend on any choice of definable distance.
A definable map has dense image in every model of $T$ if and only if it is surjective in any sufficiently saturated model.

\begin{lem}
  \label{lem:UniversalSkolemSortProduct}
  Let $D$ and $(E_m : m \in \bN)$ be sorts.
  Let $F_k = \prod_{m<k} E_m$ and $F = \prod_m E_m$.
  \begin{enumerate}
  \item If $D$ is Skolem for every $E_m$, then it is also for all $F_k$ and for $F$.
  \item If $D$ is universal for every $F_k$, then it is also for $F$.
  \end{enumerate}
\end{lem}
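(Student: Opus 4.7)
For part (1), I would treat finite products first by iterated Skolemization, then pass to the infinite product by uniform approximation. Given a formula $\varphi(x, y_{<k})$ on $D \times F_k$ and $\varepsilon > 0$, set $\psi_i(x, y_{<i}) = \inf_{y_i,\ldots, y_{k-1}} \varphi(x, y_{<k})$, which is itself a formula, with $\psi_0 = \inf_y \varphi$ and $\psi_k = \varphi$. Inductively choose $\sigma_i\colon D \to E_i$ an $(\varepsilon/k)$-Skolem map for the formula $\psi_{i+1}(x, \sigma_0(x), \ldots, \sigma_{i-1}(x), y_i)$ in $D \times E_i$ (substitution of a definable map into a formula yields a formula). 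The telescoping bound $\psi_{i+1}(x, \sigma_{<i+1}(x)) \leq \psi_i(x, \sigma_{<i}(x)) + \varepsilon/k$ gives $\varphi(x, \sigma(x)) \leq \inf_y \varphi(x,y) + \varepsilon$ for $\sigma = (\sigma_0, \ldots, \sigma_{k-1})$.

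For the infinite product $F$, a formula $\varphi(x,y)$ with $y \in F$ is, by the discussion of infinite product sorts in the excerpt, a uniform limit of formulas on $D \times F_k$. Given $\varepsilon > 0$, approximate $\varphi$ uniformly by some $\varphi'(x, y_{<k})$ within $\varepsilon/3$, and use the finite case just treated to obtain an $(\varepsilon/3)$-Skolem map $\sigma_k\colon D \to F_k$ for $\varphi'$. Extend to $\sigma\colon D \to F$ by filling in tail coordinates $m \geq k$ with any definable maps $\tau_m\colon D \to E_m$; such $\tau_m$ exist because $D$ is Skolem for each $E_m$ (apply the definition to the constant formula $0$). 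The standard three-$\varepsilon$ estimate then gives $\varphi(x, \sigma(x)) \leq \varphi'(x, \sigma_k(x)) + \varepsilon/3 \leq \inf_{y_{<k}} \varphi' + 2\varepsilon/3 \leq \inf_y \varphi + \varepsilon$.

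For part (2), fix $\varepsilon > 0$ and choose $k$ with $2^{-k} < \varepsilon/2$, so that on $F$ the tail coordinates contribute at most $\varepsilon/2$ to the distance $d(y,y') = \sup_n 2^{-n} \wedge d(y_n, y'_n)$. By the hypothesis applied with parameter $\varepsilon/2$, there is a definable $\sigma_k\colon D \to F_k$ whose image of any $(\varepsilon/2)$-ball (a fortiori of any $\varepsilon$-ball) is $(\varepsilon/2)$-dense in $F_k$. Extend $\sigma_k$ to $\sigma\colon D \to F$ by padding with definable $\tau_m\colon D \to E_m$ for $m \geq k$, obtained by projecting any witness of universality for $F_{m+1}$. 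Then for any $y \in F$ and any $\varepsilon$-ball $B \subseteq D$ there exists $x \in B$ with $d_{F_k}(\sigma_k(x), y_{<k}) < \varepsilon/2$, so $d_F(\sigma(x), y) \leq \max(\varepsilon/2, 2^{-k}) < \varepsilon$.

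The main technical obstacle in both parts is verifying that the coordinatewise-defined extension $\sigma\colon D \to F$ is genuinely a definable map. This reduces to checking two things: uniform continuity of $\sigma$, which follows from coordinatewise uniform continuity combined with the specific form of the definable distance on $F$; and the fact that $\psi \circ \sigma$ is a formula for every formula $\psi$ on $F$, which follows from the corresponding fact for finite-subproduct truncations of $\sigma$ together with the uniform-limit description of formulas on $F$.
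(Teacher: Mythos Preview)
Your proposal is correct and follows essentially the same approach as the paper: iterated Skolemization for finite products (the paper does two factors at a time, obtaining a $2\varepsilon$-Skolem map rather than your $\varepsilon/k$ bookkeeping, but this is cosmetic), approximation by finite-subproduct formulas followed by lifting for the infinite Skolem case, and truncation plus lifting for universality. Your explicit verification that a coordinatewise tuple of definable maps into $F$ is itself definable is a point the paper simply takes for granted under the word ``lifted''.
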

\begin{proof}
  First of all, either hypothesis implies that there exist definable maps $D \rightarrow E_m$ for every $m$.
  Therefore, any definable map $D \rightarrow F_k$ can be lifted into a definable map $D \rightarrow F$.

  Assume that $D$ is Skolem for $E$ and for $E'$ separately, and let $\varphi(x,y,y')$ be a formula in $D \times E \times E'$.
  Let $\sigma\colon D \rightarrow E$ be an $\varepsilon$-Skolem map for $\inf_{y'} \varphi(x,y,y')$, and let $\sigma'\colon D \rightarrow E'$ be an $\varepsilon$-Skolem map for $\varphi(x,\sigma x,y')$.
  Then $(\sigma,\sigma') \colon D \rightarrow E \times E'$ is a $2\varepsilon$-Skolem map for $\varphi$.
  It follows that if $D$ is Skolem for every $E_m$, then it is also Skolem for every $F_k$.
  Any formula in $D \times F$ can be approximated by a formula in $D \times F_k$, and an $\varepsilon$-Skolem map for the latter can be lifted to $F$ to give a, say, $2\varepsilon$-Skolem map for the former.

  For universality, we may equip $F_k$ and $F$ with the distance $d(y,y') = \sup_m 2^{-m} \wedge d(y_m,y'_m)$.
  If $2^{-k} < \varepsilon$, $\sigma\colon D \rightarrow F_k$ is definable, and any $\varepsilon$-ball in $D$ has $\varepsilon$-dense $\sigma$-image in $F_k$, then the same holds for any lifting of $\sigma$ to $D \rightarrow F$.
\end{proof}

\begin{lem}
  \label{lem:UniversalSkolemSortQuotient}
  Let $D$ and $E$ be sorts.
  If $D$ is a universal (Skolem) sort for $E$, then it is also for any quotient sort $F$ of $E$.
\end{lem}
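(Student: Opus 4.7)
The plan is to construct maps $D \to F$ by post-composing the given maps $D \to E$ with the canonical quotient map $\pi\colon E \to F$, and to verify the two required properties using the description of formulas on a metric quotient. The preliminary observation that both parts will use: every formula $\bar\varphi$ on $D \times F$ lifts uniquely via $\pi$ to a formula $\varphi = \bar\varphi \circ (\id_D \times \pi)$ on $D \times E$ (automatically uniformly continuous in the second variable with respect to $d'$), and these have equal infima in the second variable, since $\pi(E)$ is dense in $F$ and $\bar\varphi$ is continuous.

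For the Skolem part, given $\bar\varphi$ on $D \times F$ and $\varepsilon > 0$, I would lift to the corresponding $\varphi$ on $D \times E$, invoke the Skolem hypothesis to obtain a definable $\varepsilon$-Skolem map $\sigma\colon D \to E$ for $\varphi$, and set $\tau = \pi \circ \sigma$. The verification is then a one-line computation:
\[
  \bar\varphi(x, \tau x) = \varphi(x, \sigma x) \leq \inf_y \varphi(x, y) + \varepsilon = \inf_{\bar y} \bar\varphi(x, \bar y) + \varepsilon.
\]

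For the universal part, the obstacle is that $\varepsilon$-density measured in $(E, d)$ need not yield $\varepsilon$-density in $(F, d')$, since the pseudo-distance $d'$ may be much larger than $d$ in absolute terms. What saves us is that $d'$, being definable on $E$, is uniformly continuous with respect to any definable distance $d$ on $E$: there exists $\delta \in (0, \varepsilon]$ such that $d(y, y') \leq \delta$ implies $d'(y, y') \leq \varepsilon$. Applying the universality hypothesis at parameter $\delta$ produces $\sigma\colon D \to E$ whose image of every $\delta$-ball is $\delta$-dense in $(E, d)$. Since any $\varepsilon$-ball in $D$ contains a $\delta$-ball, setting $\tau = \pi \circ \sigma$ one sees that for any target point $\bar z \in F$ with representative $z \in E$, a witness $x$ in the $\delta$-ball with $d(\sigma x, z) \leq \delta$ yields $d'(\tau x, \bar z) \leq \varepsilon$, so the image of the $\varepsilon$-ball under $\tau$ is $\varepsilon$-dense in $F$.

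The only delicate step is the parameter calibration in the universal case, which rests on the uniform continuity modulus of $d'$ against $d$; once the right $\delta$ is extracted, the rest is formal. The Skolem case is essentially immediate from the fact that formulas on $F$ are a subclass of formulas on $E$.
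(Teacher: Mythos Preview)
Your proposal is correct and follows essentially the same approach as the paper. The paper's proof is a two-line sketch: for the Skolem property it says ``just replace $\varphi(x,z)$ with $\varphi(x,\pi y)$'', which is exactly your lifting of $\bar\varphi$ to $\varphi$; for universality it invokes that the quotient map $\pi\colon E \to F$ is uniformly continuous with dense image, which is precisely the content of your $\delta$-calibration argument. Your write-up simply unpacks these hints with the explicit modulus bookkeeping. One very minor remark: when you take ``a representative $z \in E$'' of $\bar z \in F$, recall that in a non-saturated model the quotient map only has dense image, so strictly speaking you should either approximate $\bar z$ by some $\pi(z)$ and absorb the error, or verify the density condition as a sentence in a saturated model; either fix is routine.
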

\begin{proof}
  For universality, this follows from the quotient map $\pi\colon E \rightarrow F$ being uniformly continuous with dense image.
  For the Skolem property, just replace $\varphi(x,z)$ with $\varphi(x,\pi y)$.
\end{proof}

Let us now combine the two properties (universality and Skolem), to obtain a Skolem map which gets all potential witnesses (more or less).

\begin{dfn}
  \label{dfn:StrongSkolemMap}
  Let $\varphi(x,y)$ be a formula on $D \times E$ such that $\inf_y \varphi = 0$, and let $\sigma\colon D \rightarrow E$ be an $\varepsilon$-Skolem map for $\varphi$.
  We say that $\sigma$ is a \emph{combined} $\varepsilon$-Skolem map for $\varphi$ if for every $(a,b) \in D \times E$, if $\varphi(a,b) = 0$, then $d\bigl( \sigma B(a,\varepsilon), b \bigr) < \varepsilon$.
  It is \emph{strong} if under the same hypotheses, $b \in \sigma B(a,\varepsilon)$ in any model containing both $a$ and $b$ (and not merely in a saturated model).
\end{dfn}

\begin{lem}
  \label{lem:CombinedSkolemMap}
  Let $D$ and $E$ be sorts.
  Then $D$ is universal Skolem for $E$ if and only if, for every formula $\varphi(x,y)$ in $D \times E$ such that $\inf_y \varphi = 0$ and every $\varepsilon > 0$, there exists a combined $\varepsilon$-Skolem map $\sigma\colon D \rightarrow E$.
\end{lem}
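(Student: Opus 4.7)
The plan is to handle the two directions separately, with $(\Leftarrow)$ being immediate and $(\Rightarrow)$ being the substantive direction, which I would attack by applying the Skolem property to a well-chosen perturbation of $\varphi$ by the universal map.

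For $(\Leftarrow)$: Applying the hypothesis to the identically zero formula $\varphi \equiv 0$ (for which every $y \in E$ is a witness), a combined $\varepsilon$-Skolem map is exactly a definable $\sigma \colon D \to E$ satisfying $d(\sigma B(a, \varepsilon), b) < \varepsilon$ for every $(a, b) \in D \times E$, i.e.\ a universal map at scale $\varepsilon$. Applying it to an arbitrary $\varphi$ after replacing $\varphi$ by $\varphi - \inf_y \varphi$ (reducing to the $\inf_y \varphi = 0$ case) yields an $\varepsilon$-Skolem map via the Skolem clause of the combined map. Thus combined maps at every $\varepsilon$ imply the two conditions of universal Skolem.

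For $(\Rightarrow)$: fix $\varphi$ with $\inf_y \varphi = 0$ and $\varepsilon > 0$; let $\omega$ be a continuity modulus for $\varphi$ in its first argument, and $R$ a bound on the diameter of $E$ in a fixed definable distance. Set $\lambda = \varepsilon/(3R)$ and pick $\delta > 0$ small enough that $\omega(\delta)/\lambda + \delta/\lambda + 2\delta < \varepsilon$ and $\lambda R + \delta < \varepsilon$. Using universality, take $\tau \colon D \to E$ with $\tau(B(a,\delta))$ $\delta$-dense in $E$ for every $a \in D$; using the Skolem property, take a $\delta$-Skolem map $\sigma \colon D \to E$ for
\[
  \psi(x, y) = \varphi(x, y) + \lambda \, d(y, \tau x).
\]
I claim $\sigma$ is a combined $\varepsilon$-Skolem map for $\varphi$. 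For Skolemness: at any $x$, evaluating $\psi$ at an approximate zero-witness of $\varphi(x, \cdot)$ shows $\inf_y \psi(x, \cdot) \leq \lambda R$, so the Skolem estimate for $\psi$ yields $\varphi(x, \sigma x) \leq \lambda R + \delta < \varepsilon$. For the combined clause: given $(a, b)$ with $\varphi(a, b) = 0$, universality provides $a' \in B(a, \delta) \subseteq B(a, \varepsilon)$ with $d(\tau a', b) < \delta$; continuity gives $\varphi(a', b) \leq \omega(\delta)$, whence $\psi(a', b) \leq \omega(\delta) + \lambda \delta$ and $\inf_y \psi(a', \cdot) \leq \omega(\delta) + \lambda \delta$; the Skolem estimate at $a'$ then forces $\lambda \, d(\sigma a', \tau a') \leq \omega(\delta) + \lambda \delta + \delta$, and the triangle inequality chains this to $d(\sigma a', b) \leq \omega(\delta)/\lambda + \delta/\lambda + 2\delta < \varepsilon$ by the choice of $\delta$.

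The main obstacle is the balancing of the coupling constant $\lambda$: a larger $\lambda$ pins $\sigma$ more tightly to $\tau$ (which is what yields the combined coverage, via universality of $\tau$), but inflates $\inf_y \psi$ by up to $\lambda R$ (weakening the Skolem bound); a smaller $\lambda$ protects Skolemness but loosens the pinning. The symmetric choice $\lambda = \varepsilon/(3R)$ keeps both errors at order $\varepsilon/3$, after which $\delta$ can be chosen as small as desired to absorb the continuity factor $\omega(\delta)/\lambda$.
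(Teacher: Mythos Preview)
Your proof is correct and follows essentially the same strategy as the paper: use universality to obtain $\tau$, perturb $\varphi$ by a penalty measuring $d(y,\tau x)$, and take a Skolem map for the perturbed formula. The only difference is cosmetic: you couple via a linear term $\lambda\, d(y,\tau x)$ (relying on a diameter bound $R$ for $E$ to control $\inf_y\psi$), whereas the paper couples via the truncated term $(4\varepsilon \dotminus \varphi(x,\tau x)) \wedge d(y,\tau x)$, which is self-bounding and so avoids invoking $R$; both lead to the same conclusion after the obvious rescaling of~$\varepsilon$.
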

\begin{proof}
  For right to left, the Skolem property is immediate, and for universality consider $\varphi = 0$.
  For the other direction, assume that $D$ is universal Skolem for $E$.
  Let $\delta > 0$ be small enough that $d(x,x'),d(y,y') < \delta$ imply $\bigl| \varphi(x,y) - \varphi(x',y') \bigr| < \varepsilon$, and by universality, let $\tau\colon D \rightarrow E$ be definable, such that the image of every $\delta$-ball is $\delta$-dense.
  Let $\psi(x,y)$ be the formula
  \begin{gather*}
    \varphi(x,y) + \bigl( 4 \varepsilon \dotminus \varphi(x,\tau x) \bigr) \wedge d(y, \tau x)
  \end{gather*}
  Considering the cases where $\varphi(x, \tau x) > 2 \varepsilon$ and $\leq 2 \varepsilon$ separately, we see that $\inf_y \psi \leq 2\varepsilon$ (in the first use the fact that $\inf_y \varphi = 0$, and in the second take $y = \tau x$).
  Let $\sigma$ be an $\varepsilon$-Skolem map for $\psi$.
  Then it is, in particular, a $3\varepsilon$-Skolem map for $\varphi$.

  Assume now that $\varphi(a,b) = 0$.
  By hypothesis on $\tau$, there exists $a' \in B(a,\delta)$ such that $d(\tau a',b) < \delta$.
  It follows that $\varphi(a', \tau a') < \varepsilon$.
  Since $\psi(a',\sigma a') \leq 3\varepsilon$, we must have $d(\sigma a', \tau a') \leq 3\varepsilon$.
  We conclude that $d\bigl( B(a,\delta), b \bigr) < 3\varepsilon + \delta$, which is enough.
\end{proof}

\begin{lem}
  \label{lem:StrongSkolemMap}
  Let $D$ and $E$ be sorts.
  Then the following are equivalent:
  \begin{enumerate}
  \item For every formula $\varphi(x,y)$ in $D \times E$ satisfying $\inf_y \varphi = 0$ and every $\varepsilon > 0$ there exists a strong $\varepsilon$-Skolem map $\sigma\colon D \rightarrow E$.
  \item There exists a sort $E' \supseteq E$ such that $D$ is universal Skolem for $E'$.
  \end{enumerate}
  In particular, being universal Skolem for $E$ passes to sub-sorts of $E$.
\end{lem}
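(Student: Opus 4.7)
The two directions of the equivalence differ considerably in difficulty, so I plan to treat them separately. For (i) $\Rightarrow$ (ii), I will simply take $E' = E$ and verify that $D$ is universal Skolem for $E$ itself. The Skolem property follows by reducing an arbitrary $\varphi(x,y)$ on $D \times E$ to $\psi(x,y) = \varphi(x,y) - \inf_y \varphi(x,\cdot)$, which satisfies $\inf_y \psi = 0$; a strong $\varepsilon$-Skolem map for $\psi$ supplied by (i) is in particular an $\varepsilon$-Skolem map for $\varphi$. Universality follows by applying (i) to the formula $\varphi \equiv 0$: the resulting strong map $\sigma$ satisfies $b \in \sigma B(a,\varepsilon)$ for \emph{every} $(a,b) \in D \times E$, so the image of each $\varepsilon$-ball in fact covers all of $E$.

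For the substantive direction (ii) $\Rightarrow$ (i), I assume $E' \supseteq E$ with $D$ universal Skolem for $E'$, fix $\varphi$ on $D \times E$ with $\inf_y \varphi = 0$, and $\varepsilon > 0$. First, using that $E$ is a definable subset of $E'$, I extend $\varphi$ to a formula $\tilde\varphi$ on $D \times E'$ agreeing with $\varphi$ on $D \times E$ (e.g.\ $\tilde\varphi(x,y') = \inf_{y \in E}\bigl[\varphi(x,y) + N d(y,y')\bigr]$ for a suitable Lipschitz constant $N$). Next I form $\Psi(x,y') = \tilde\varphi(x,y') + d(y',E)$ on $D \times E'$; its zero-set equals $\{(a,b) \in D \times E : \varphi(a,b) = 0\}$ and still $\inf_{y'} \Psi = 0$. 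Applying \autoref{lem:CombinedSkolemMap} to $\Psi$ produces a combined $\delta$-Skolem map $\tilde\sigma \colon D \to E'$ for some $\delta$ chosen small in terms of $\varepsilon$.

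The main obstacle is then upgrading $\tilde\sigma$ to a strong $\varepsilon$-Skolem map $\sigma \colon D \to E$ for $\varphi$: the combined property yields, for each $(a,b)$ with $\varphi(a,b) = 0$, a witness $a' \in B(a,\delta)$ satisfying $d(\tilde\sigma a', b) < \delta$ (and hence $\tilde\sigma a'$ within $\delta$ of $E$), whereas the strong property demands $\sigma a' = b$ exactly, in every model and not merely in saturated ones. I plan to construct $\sigma$ by a variant of the $\psi$-trick in the proof of \autoref{lem:CombinedSkolemMap}: feed $\tilde\sigma$ into a further Skolem construction with a penalty term that simultaneously forces the output into $E$ and forces it to hit the target on the nose, exploiting that $D$ is universal Skolem for the strictly larger sort $E'$ (and not merely for $E$) to secure the exact-targeting flexibility needed even in non-saturated models.

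The ``in particular'' clause is then a direct corollary of the equivalence: given $D$ universal Skolem for $E$ and a sub-sort $E_0 \subseteq E$, applying (ii) $\Rightarrow$ (i) with $E' = E$ yields strong Skolem maps for formulas on $D \times E_0$, and applying (i) $\Rightarrow$ (ii) back to $E_0$ produces universal Skolem for $E_0$.
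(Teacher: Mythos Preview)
Your treatment of (i) $\Rightarrow$ (ii) and of the ``in particular'' clause is correct and matches the paper exactly: take $E' = E$, deduce Skolem from strong Skolem, and obtain universality (indeed, surjectivity on every $\varepsilon$-ball) from the strong map for $\varphi \equiv 0$.

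For (ii) $\Rightarrow$ (i), your setup is also what the paper does: extend $\varphi$ to a nonnegative formula $\psi$ on $D \times E'$, incorporate the penalty $d(y',E)$, and invoke \autoref{lem:CombinedSkolemMap}. The gap is in your final step. You propose to ``feed $\tilde\sigma$ into a further Skolem construction with a penalty term that \ldots\ forces it to hit the target on the nose.'' No single further application of \autoref{lem:CombinedSkolemMap} can do this: a combined $\delta$-Skolem map only yields $d(\sigma a', b) < \delta$, and no finite number of refinements will turn an approximation into an exact equality $\sigma a' = b$ valid in every model. What is needed---and what the paper does---is an \emph{infinite} iteration followed by a limit. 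One builds a sequence $\sigma_n\colon D \to E'$ where each $\sigma_n$ is combined $\delta_n$-Skolem for a formula
\[
  \psi_{n+1}(x,y') = d(y',E) + \bigl[\psi(x,y') \dotminus c_n\bigr] + \bigl[d(\sigma_n x, y') \dotminus \delta_n\bigr],
\]
the last penalty forcing $\sigma_{n+1}$ close to $\sigma_n$. With $\delta_n$ summable, the sequence $(\sigma_n)$ is uniformly Cauchy and converges to a definable $\sigma\colon D \to E$. For strongness, given $\varphi(a,b) = 0$ in a model $M$, one uses the combined property of each $\sigma_n$ to build a Cauchy sequence $(a_n)$ \emph{inside} $D(M)$ with $d(\sigma_n a_{n+1}, b) < \delta_n$; its limit $a' \in D(M)$ then satisfies $\sigma a' = b$ exactly.

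So the missing idea is that exactness (and hence strongness in arbitrary models) is obtained not by a clever single penalty but by a convergent sequence of combined Skolem maps, together with a parallel Cauchy sequence of approximate preimages constructed inside the given model.
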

\begin{proof}
  In one direction, Skolem is immediate and a strong $\varepsilon$-Skolem map for the zero formula yields (a strong variant of) universality.
  In the other direction, let $\varphi(x,y)$ and $\varepsilon > 0$ be given.
  Since $\varphi$ is always positive, we may extend $\varphi$ to a positive formula on $D \times E'$, denoted $\psi(x,y')$.
  In particular, $\inf_{y'} \psi = 0$ as well.
  Let $\eta_n = (1-2^{-n-1})\varepsilon$ and let $0 < \delta_n < \varepsilon/2^{n+2}$ be such that if $d(x_1,x_2) + d(y'_1,y'_2) \leq \delta_n$, then $\bigl| \psi(x_1,y'_1) - \psi(x_2,y'_2) \bigr| \leq \varepsilon/2^{n+3}$.

  We construct a sequence of definable maps $\sigma_n \colon D \rightarrow E'$, such that $d(\sigma_n x,E) \leq \delta_n$ and $\psi(x,\sigma_n x) \leq \eta_n$.
  We define
  \begin{align*}
    \psi_0(x,y') & = d(y',E) + \psi(x,y'),
    \\
    \psi_{n+1}(x,y') & = d(y',E) + \bigl[ \psi(x,y') \dotminus (\eta_n + \varepsilon/2^{n+3}) \bigr] + \bigl[ d(\sigma_n x,y') \dotminus \delta_n \bigr].
  \end{align*}
  We have $\inf_y \psi_0 = 0$ by assumption.
  Given $\sigma_n$, for each $a \in D$ there exists $b \in E$ such that $d(\sigma_n a,b) \leq \delta_n$, so $\psi(a,b) \leq \psi(a,\sigma_n a) + \varepsilon/2^{n+3} \leq \eta_n + \varepsilon/2^{n+3}$ and $\psi_{n+1}(a,b) = 0$.
  Therefore $\inf_y \psi_{n+1} = 0$ as well.

  By \autoref{lem:CombinedSkolemMap}, $\psi_n$ admits a combined $\delta_n$-Skolem map $\sigma_n\colon D \rightarrow E'$.
  Then indeed $d(\sigma_n x,E) \leq \delta_n$.
  We also have $\psi(x,\sigma_0 x) \leq \delta_0 < \eta_0$ and $\psi(x, \sigma_{n+1} x) \leq \eta_n + \varepsilon/2^{n+3} + \delta_{n+1} < \eta_{n+1}$, so the construction may proceed.

  We have $d(\sigma_n,\sigma_{n+1}) \leq \delta_n + \delta_{n+1}$, so the sequence $(\sigma_n)$ converges uniformly to a definable map $\sigma\colon D \rightarrow E'$.
  We have $d(\sigma x,E) \leq \lim \delta_n = 0$, so in fact $\sigma\colon D \rightarrow E$, and $\varphi(x,\sigma x) = \psi(x, \sigma x) \leq \lim \eta_n = \varepsilon$.

  Assume now that $\varphi(a,b) \leq 0$, and let us construct a sequence $(a_n) \subseteq D$ such that $\psi_n(a_n,b) = 0$.
  We start with $a_0 = a$ (indeed, $\psi_0(a,b) = 0$).
  Since $\sigma_n$ is combined $\delta_n$-Skolem for $\psi_n$ and $\psi(a_n,b) = 0$, there exists $a_{n+1} \in B(a_n,\delta_n)$ such that $d(b,\sigma_n a_{n+1}) < \delta_n$.
  We have $\varphi(a_n,b) \leq \eta_{n-1} + \varepsilon/2^{n+2} < \eta_n$, so $\varphi(a_{n+1},b) < \eta_n + \varepsilon/2^{n+3}$.
  Therefore $\psi_{n+1}(a_{n+1},b) = 0$, and the construction may proceed.

  The sequence $(a_n)$ converges to some $a' \in D$, where $d(a,a') < \sum \delta_n < \varepsilon$, and $\sigma a' = b$.
  If $a \in D(M)$ and $b \in E(M)$ for some $M \vDash T$, then the entire sequence can be constructed in $D(M)$, proving that $\sigma$ is a strong $\varepsilon$-Skolem function for $\varphi$.
\end{proof}

\begin{prp}
  \label{prp:UniversalSkolemSort}
  A sort $D$ is universal Skolem (for all sorts) if and only if it is Skolem for every basic sort, and universal for any finite product of the basic sort(s).
\end{prp}
\begin{proof}
  One direction is immediate, and the other follows from \autoref{lem:UniversalSkolemSortProduct}, \autoref{lem:UniversalSkolemSortQuotient} and \autoref{lem:StrongSkolemMap}.
\end{proof}

Let $D$ and $E$ be any two sorts.
We equip the space of definable maps $\sigma\colon D \rightarrow E$ with the distance of uniform convergence
\begin{gather*}
  d(\sigma,\rho) = \sup_{x \in D} \, d(\sigma x, \rho x).
\end{gather*}
This renders the space of definable maps a complete separable metric space.

\begin{thm}
  \label{thm:UniversalSkolemBijection}
  Let $D$ and $E$ be two sorts that are universal Skolem for each other.
  Then there exists a definable bijection $\sigma\colon D \cong E$.
\end{thm}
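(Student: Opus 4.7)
The plan is to adapt the back-and-forth construction of \autoref{prp:DPhiUnique} to the continuous setting, with strong Skolem maps (\autoref{lem:StrongSkolemMap}) playing the role that quantifier manipulation plays in \autoref{lem:ApproximateBijectionExtension}. I would construct, by simultaneous induction, two Cauchy sequences of definable maps $\sigma_n\colon D\to E$ and $\rho_n\colon E\to D$, together with summable tolerances $\varepsilon_n\searrow 0$, satisfying
\begin{align*}
  d(\sigma_n,\sigma_{n+1}) &\le \varepsilon_n, & d(\rho_n,\rho_{n+1}) &\le \varepsilon_n, \\
  \sup_{x\in D} d(\rho_n\sigma_n x,x) &\le \varepsilon_n, & \sup_{y\in E} d(\sigma_n\rho_n y,y) &\le \varepsilon_n.
\end{align*}
Since the space of definable maps is complete under the uniform distance, the uniform limits $\sigma\colon D\to E$ and $\rho\colon E\to D$ are again definable; the final two conditions pass to the limit and give $\rho\sigma=\id_D$ and $\sigma\rho=\id_E$, which is precisely a definable bijection.

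To perform the inductive step, at the even stage I would take a strong $\varepsilon_{n+1}$-Skolem map $\sigma_{n+1}$ for a combined formula on $D\times E$ of the shape
\begin{gather*}
  \varphi(x,y) \;=\; d(y,\sigma_n x) \;+\; N\bigl[ d(\rho_n y,x) \dotminus \varepsilon_n \bigr],
\end{gather*}
with $N$ chosen large enough to force the second term to vanish; one checks that $\inf_y\varphi$ is small, witnessed by $y=\sigma_n x$. Strongness of the Skolem map -- which exists because $D$ is universal Skolem for $E$ -- means that for any $a\in D$ and any $b\in E$ with $\varphi(a,b)=0$, a witness $a'\in B(a,\varepsilon_{n+1})$ with $\sigma_{n+1}a'=b$ exists in every ambient model, not merely in a saturated extension. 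Applied with $b$ ranging over $\sigma_n D$ (and, after one further improvement, over all of $E$), this is exactly what drives $\sigma_{n+1}\rho_{n+1}$ to the identity on $E$ in every model. The odd stages are completely symmetric, using the hypothesis that $E$ is universal Skolem for $D$ to produce $\rho_{n+1}$.

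The main obstacle is the bookkeeping: four quantities must be pushed to zero simultaneously while the $\varepsilon_{n+1}$-Skolem step inevitably degrades the errors inherited from stage $n$, so the tolerances and the weights $N$ must be tuned so that the losses telescope summably. The conceptual point underlying the technical work is the use of \emph{strong} rather than merely combined Skolem maps: only strongness guarantees that the near-identity conditions $\rho_n\sigma_n\approx\id_D$ and $\sigma_n\rho_n\approx\id_E$ hold pointwise in every model, which is what is needed for the limiting composition to be the true identity, and not just the identity in the sense of an ultrapower.
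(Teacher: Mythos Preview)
Your strategy is the paper's: alternate strong Skolem maps to build Cauchy sequences $\sigma_n,\rho_n$ and pass to the uniform limit. The execution, however, is simpler in the paper and dissolves the ``bookkeeping obstacle'' you flag. Instead of the weighted formula $d(y,\sigma_n x)+N[\,d(\rho_n y,x)\dotminus\varepsilon_n\,]$ and the two-sided invariant $\rho_n\sigma_n\approx\id$, $\sigma_n\rho_n\approx\id$, the paper maintains only that each $\sigma_n,\rho_n$ is \emph{surjective}, and uses the bare formula $\varphi_n(x,y)=d(x,\sigma_n y)$. Surjectivity of $\sigma_n$ gives $\inf_y\varphi_n=0$; a strong $\varepsilon_n$-Skolem map $\rho_n$ for $\varphi_n$ then yields both $d(x,\sigma_n\rho_n x)<\varepsilon_n$ (Skolem) and surjectivity of $\rho_n$ (strongness applied to $\varphi_n(\sigma_n y,y)=0$), so the invariant propagates for free. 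The Cauchy estimates then follow from the single one-sided near-identity at each stage together with the uniform continuity moduli of the already-constructed maps (one chooses $\varepsilon_{n+1}$ small relative to the modulus of $\rho_n$, and symmetrically). There is no need for the $N[\cdot]$ term, and only two quantities rather than four are controlled at each step.
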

\begin{proof}
  We construct surjective definable maps $\sigma_n \colon E \rightarrow D$ and $\rho_n\colon D \rightarrow E$ as follows.
  We start with $\sigma_0$, which exists by universality of $E$.

  Assume now that $\sigma_n$ is known.
  Let $\varphi_n(x,y)$ be the formula $d(x,\sigma_n y)$.
  Then $\inf_x \varphi_n = 0$, and since $\sigma_n$ is surjective, $\inf_y \varphi_n = 0$ as well.
  If $n = 0$, let $0 < \varepsilon_0 < 1$ be arbitrary.
  For $n > 0$, since a definable map is uniformly continuous, choose $0 < \varepsilon_n < 2^{-n}$ such that
  \begin{gather*}
    d(x,x') < \varepsilon_n
    \quad \Longrightarrow \quad
    d(\rho_{n-1} x, \rho_{n-1} x'') < 2^{-n}.
  \end{gather*}
  Then choose a strong $\varepsilon_n$-Skolem function $\rho_n\colon D \rightarrow E$ for $\varphi_n$.
  Since $\rho_n$ Skolem, we have
  \begin{gather*}
    d(x, \sigma_n \rho_n x) = \varphi_n(x, \rho_n x) < \varepsilon_n.
  \end{gather*}
  Since  $\varphi_n(\sigma_n y,y) = 0$ and $\rho_n$ is strong, it is surjective.

  Similarly, given $\rho_n\colon D \rightarrow E$ we construct a surjective definable $\sigma_{n+1}\colon E \rightarrow D$ such that
  \begin{gather*}
    d(y, \rho_n \sigma_{n+1} y) < \delta_n < 2^{-n},
  \end{gather*}
  where
  \begin{gather*}
    d(y,y') < \delta_n
    \quad \Longrightarrow \quad
    d(\sigma_n y, \sigma_n y') < 2^{-n}.
  \end{gather*}

  Once the construction is complete, we have
  \begin{gather*}
    d(\rho_n,\rho_{n+1})
    \leq d(\rho_n,\rho_n \sigma_{n+1} \rho_{n+1}) + d(\rho_n \sigma_{n+1} \rho_{n+1}, \rho_{n+1})
    < 2^{-n-1} + 2^{-n},
    \\
    d(\sigma_n,\sigma_{n+1})
    \leq d(\sigma_n,\sigma_n \rho_n \sigma_{n+1}) + d(\sigma_n \rho_n \sigma_{n+1}, \sigma_{n+1})
    < 2^{-n} + 2^{-n}.
  \end{gather*}
  The sequences $(\sigma_n)$  and $(\rho_n)$ converge uniformly to definable maps $\sigma$ and $\rho$, and $\rho = \sigma^{-1}$.
\end{proof}

In particular, the universal Skolem sort, if it exists, is unique (up to a definable bijection).
Let us point out a few general properties of universal Skolem sorts.

\begin{lem}
  \label{lem:UniversalSkolemSortTypes}
  Let $D$ be a universal Skolem sort.
  The space of types in $D$, denoted $\tS_D(T)$, is homeomorphic to the Cantor space.
  Moreover, if $U \subseteq \tS_D(T)$ is clopen and non-empty, and $D_U \subseteq D$ consists of all realisations of types in $U$, then $D_U$ is definable in $D$, and is again a universal Skolem sort.
\end{lem}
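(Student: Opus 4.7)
The plan is to prove three assertions: that $\tS_D(T)$ is a Cantor space, that $D_U$ is definable in $D$, and that $D_U$ is again a universal Skolem sort. For the first, $\tS_D(T)$ is compact and metrisable by the general theory of continuous logic in a countable language, so it suffices to verify perfectness and total disconnectedness. No isolated points should follow from a standing assumption on $T$ analogous to the classical one that no model is a singleton, together with universality: an isolated $p$ would contradict the existence of a universal $\sigma \colon D \to D$ whose $\varepsilon$-balls surject $\varepsilon$-densely onto $D$, since inside a small neighbourhood of a realisation of $p$ this would force a collapse of the type.

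Total disconnectedness is the main obstacle. I plan to construct, for every pair of distinct types $p \neq q$ in $\tS_D(T)$, a $\{0,1\}$-valued definable predicate on $D$ separating them. Starting from a formula $\varphi \colon D \to [0,1]$ with $\varphi(p) = 0$ and $\varphi(q) > 2\varepsilon$, the idea is to iterate the strong Skolem construction of \autoref{lem:StrongSkolemMap} applied to auxiliary formulas that penalise intermediate values, and pass to a uniform limit in the spirit of the construction at the end of \autoref{thm:UniversalSkolemBijection}. The output should be a definable predicate that equals $0$ near $\{\varphi \leq 0\}$ and $1$ near $\{\varphi \geq 2\varepsilon\}$ and is $\{0,1\}$-valued everywhere, yielding a clopen neighbourhood basis.

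Now let $U \subseteq \tS_D(T)$ be clopen and non-empty. The characteristic function of $U$ on the Cantor space pulls back to a definable $\{0,1\}$-valued predicate $\mathbf{1}_U$ on $D$ with $D_U = \{\mathbf{1}_U = 0\}$. Using a bounded distance, one checks that
\begin{gather*}
  d(y, D_U) \wedge 1 = \inf_{x \in D} \bigl[ (d(x,y) \wedge 1) + 2 \cdot \mathbf{1}_U(x) \bigr],
\end{gather*}
and since the right-hand side is a formula, $D_U$ is definable.

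Finally, to show $D_U$ is a universal Skolem sort, I apply \autoref{prp:UniversalSkolemSort} and reduce to checking the Skolem property for each basic sort and universality for each finite product of basic sorts. For Skolem, any formula $\varphi(x,y)$ on $D_U \times E$ extends, by the definability of $D_U$, to a formula $\tilde\varphi(x,y) = \inf_{x' \in D_U}\bigl[\varphi(x',y) + N \cdot d(x,x')\bigr]$ on $D \times E$ with $\inf_y \tilde\varphi(x,y) = \inf_y \varphi(x,y)$ on $D_U$ for appropriately large $N$; applying Skolem on $D$ to $\tilde\varphi$ and restricting yields the desired map on $D_U$. For universality, one combines a universal map $\tau \colon D \to E$ with the $\varepsilon/2$-ball property with a Skolem adjustment forcing the witnesses back into $D_U$, producing $\sigma \colon D_U \to E$ whose $\varepsilon$-balls in $D_U$ still surject $\varepsilon$-densely onto $E$. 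The construction of $\{0,1\}$-valued predicates in the second paragraph is the main technical hurdle; the remaining steps are extension and restriction arguments.
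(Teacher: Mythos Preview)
Your ``main technical hurdle'' dissolves once you notice that $\{0,1\}$ is itself a sort and that $D$ is Skolem for it. Given distinct $p,q \in \tS_D(T)$, pick $\varphi(x)$ with $\varphi(p)=0$, $\varphi(q)=1$, and set $\psi(x,y) = (2\varphi(x)\dotminus 1)$ when $y=0$ and $(1\dotminus 2\varphi(x))$ when $y=1$, so $\inf_y\psi=0$. A $1/3$-Skolem map $\sigma\colon D\to\{0,1\}$ for $\psi$ is then a definable $\{0,1\}$-valued predicate separating $p$ from $q$: no iteration, no limits. Your proposed iterative scheme (``penalise intermediate values, pass to a uniform limit'') is left entirely vague, and it is not clear what would force the limit to be $\{0,1\}$-valued; as written this is a genuine gap rather than merely an inefficient route.

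The rest of your outline works but is heavier than needed, and your perfectness argument is muddled: the image of an $\varepsilon$-ball under a definable $\sigma\colon D\to D$ being $\varepsilon$-dense in $D$ does not obviously contradict that ball realising a single type, since the image lies in a single type too and you have not yet established any metric separation between types. The paper instead first proves the ``moreover'' clause and then observes that a universal sort must realise more than one type (it surjects onto $\{0,1\}$), so no clopen set can be a singleton. For definability and the universal Skolem property of $D_U$, the paper's shortcut is a compactness argument: with $V$ the complementary clopen set, $d(D_U,D_V)=r>0$, which simultaneously gives definability of $D_U$ and, for $\varepsilon<r$, that every $\varepsilon$-ball of $D$ centred in $D_U$ lies in $D_U$, so the restriction of any universal $\sigma\colon D\to E$ witnesses universality of $D_U$ directly. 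Your extension/restriction arguments are correct but unnecessary once you have $r>0$.
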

\begin{proof}
  Assume that $p,q \in \tS_D(T)$ are distinct.
  Then there exists a formula $\varphi(x)$, say with values in $[0,1]$, such that $\varphi(p) = 0$ and $\varphi(q) = 1$.
  Let $y$ be a variable in the sort $\{0,1\}$, and define $\psi(x,y)$ to be $2\varphi(x) \dotminus 1$ if $y = 0$ and $1 \dotminus 2 \varphi(x)$ if $y = 1$, so $\inf_y \psi = 0$.
  If $\sigma\colon D \rightarrow \{0,1\}$ is $1/3$-Skolem, then it separates the type space into two clopen sets, one containing $p$ and the other $q$.
  This proves that $\tS_D(T)$ is totally disconnected.

  Let $U,V \subseteq \tS_D(T)$ be non-empty, complementary clopen sets.
  By a compactness argument, $d(D_U,D_V) = r > 0$, so $D_U$ is a definable subset of $D$.
  Considering $r > \varepsilon > 0$, we see that $D_U$ is also universal, and it is clearly Skolem.

  Since a universal sort must realise more than one type, this also shows that $\tS_D(T)$ has no isolated points.
  Being metrisable (since the language is countable), it is the Cantor set.
  %
\end{proof}

\begin{lem}
  \label{lem:UniversalSkolemSortInverseLimit}
  If $D_0 \twoheadleftarrow D_1 \twoheadleftarrow \cdots$ is an inverse system of universal Skolem sorts with surjective definable maps, then its inverse limit is again universal Skolem sort.
\end{lem}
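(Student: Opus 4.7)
The plan is first to realize $D := \varprojlim D_n$ as a sort, then verify the two conditions of \autoref{prp:UniversalSkolemSort}. For sort-ness, exhibit $D$ as a definable subset of the product sort $\prod_n D_n$, cut out by the coherence conditions $\pi_n(x_{n+1}) = x_n$. To see the distance to $D$ is really a formula, any formula $\varphi(x, y)$ on $\prod_n D_n$ is a uniform limit of formulas depending only on finitely many $x$-coordinates, and for such a $\varphi(x_{<N}, y)$,
\begin{gather*}
  \inf_{x \in D} \varphi(x_{<N}, y) = \inf_{z \in D_{N-1}} \varphi\bigl( \pi_{0,N-1}(z), \ldots, \pi_{N-2,N-1}(z), z, y \bigr),
\end{gather*}
where $\pi_{k,m} := \pi_k \circ \cdots \circ \pi_{m-1}$, is manifestly a formula on $D_{N-1}$: the compatible prefix $x_{<N}$ is forced by $x_{N-1} = z$, and in $\aleph_1$-saturated models such a prefix always extends to a full element of $D$ via iterated surjective lifts along the $\pi_k$. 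The coordinate projections $p_n\colon D \to D_n$ are thereby definable and surjective in saturated models.

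For Skolem-ness at a basic sort $E$, let $\varphi(x,y)$ on $D \times E$ satisfy $\inf_y \varphi = 0$ and fix $\varepsilon > 0$. Approximate $\varphi$ uniformly within $\varepsilon/4$ by some $\tilde\psi \circ (p_n \times \id_E)$ with $\tilde\psi$ a formula on $D_n \times E$. Surjectivity of $p_n$ then forces $\inf_y \tilde\psi(z, y)$ to lie in $(-\varepsilon/4, \varepsilon/4)$ uniformly in $z \in D_n$, so subtracting $\inf_{y'} \tilde\psi(z, y')$ (itself a formula) yields a $\tilde\psi'$ with $\inf_y \tilde\psi' = 0$. Skolem-ness of $D_n$ supplies $\sigma\colon D_n \to E$ with $\tilde\psi'(z,\sigma z) \leq \varepsilon/4$, whence $\sigma \circ p_n\colon D \to E$ is an $\varepsilon$-Skolem map for $\varphi$.

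The universality step is where the real work lies. Given a finite product $F$ of basic sorts and $\varepsilon > 0$, choose $n$ with $2^{-n} < \varepsilon$, and by uniform continuity of each iterated projection $\pi_{k,n}\colon D_n \to D_k$ for $k < n$, pick $\varepsilon' \in (0, \varepsilon)$ small enough that each such modulus carries $\varepsilon'$ to a value strictly below $\varepsilon$. Using universality of $D_n$, choose a definable $\sigma_n\colon D_n \to F$ whose image on every $\varepsilon'$-ball is $\varepsilon'$-dense, and set $\sigma = \sigma_n \circ p_n$. Given $x \in D$ and $b \in F$, pick $z \in D_n$ with $d_n(z, p_n(x)) < \varepsilon'$ and $d(\sigma_n(z), b) < \varepsilon'$. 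Working in a saturated model, lift $z$ to $a \in D$ with $p_n(a) = z$ by forcing $a_k = \pi_{k,n}(z)$ for $k \leq n$ and picking $a_k$ for $k > n$ via iterated surjective lifts. For $k \leq n$ the modulus choice gives $d_k(a_k, x_k) < \varepsilon$, and for $k > n$ the term $2^{-k} < \varepsilon$ dominates; hence $d(a,x) < \varepsilon$ and $d(\sigma(a), b) < \varepsilon'$. Since universality is witnessed by a uniform upper bound of the form $\sup_{x,b} \inf_a \leq \varepsilon'$ strictly below $\varepsilon$, it transfers from saturated to arbitrary models.

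The main obstacle I anticipate is the bookkeeping in the universality step: existence of the lift $a$ in a saturated model rests on iterating surjectivity of each individual $\pi_k$, and one must coordinate $\varepsilon$ and $\varepsilon'$ against the continuity moduli of the finitely many iterated projections $\pi_{k,n}$ for $k < n$ so that the forced coordinates $a_k$ stay within $\varepsilon$ of $x_k$. Once this scale balance is set, the rest of the argument is routine.
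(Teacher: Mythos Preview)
Your argument is correct and follows essentially the same route as the paper: realise $D$ as a definable subset of $\prod_n D_n$, deduce the Skolem property by approximating formulas on $D\times E$ by ones factoring through some $D_n$, and deduce universality by composing with a projection $p_n$. The paper's proof is considerably terser---in particular its ``any one of them can be used to show that $D$ is universal'' elides precisely the $\varepsilon/\varepsilon'$ bookkeeping you flag as the main obstacle---so your version, which routes through \autoref{prp:UniversalSkolemSort} and tracks the moduli of the iterated projections explicitly, is a faithful and more detailed rendering of the same idea.
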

\begin{proof}
  First of all, it is fairly easy to check that the inverse limit, call it $D$, is a definable subset of $\prod D_i$, so it is a sort.
  The maps $D \rightarrow D_i$ are definable and surjective, and since each $D_i$ is universal, any one of them can be used to show that $D$ is universal as well.
  For any sort $E$, any formula on $D \times E$ can be approximated arbitrarily well by a formula on $D_i \times E$ for some $i$ large enough, so $D$ is also Skolem.
\end{proof}

\begin{lem}
  \label{lem:UniversalSkolemSortAbsorption}
  Let $D$ be a universal Skolem sort of $T$.
  Then $D \times 2$ and $D \times 2^\bN$ are also universal Skolem sorts.
\end{lem}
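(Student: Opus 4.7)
The plan is to handle $D \times 2$ first by verifying the criterion of \autoref{prp:UniversalSkolemSort}, and then to bootstrap to $D \times 2^\bN$ via \autoref{lem:UniversalSkolemSortInverseLimit}. The structural observation driving the first step is that $D \times 2$ splits as a disjoint union of two clopen definable copies of $D$, namely $D \times \{0\}$ and $D \times \{1\}$, so a definable map $D \times 2 \to E$ is the same datum as a pair of definable maps $D \to E$.

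For the Skolem property of $D \times 2$ against a basic sort $E$: given a formula $\varphi((x,z), y)$ on $(D \times 2) \times E$, form the restrictions $\varphi_0(x,y) = \varphi((x,0),y)$ and $\varphi_1(x,y) = \varphi((x,1),y)$ on $D \times E$, take $\varepsilon$-Skolem maps $\sigma_0, \sigma_1\colon D \to E$ via the hypothesis on $D$, and glue them into $\sigma(x,z) = \sigma_z(x)$. This $\sigma$ is definable on each of the two clopen pieces, hence definable as a whole, and is plainly an $\varepsilon$-Skolem map for $\varphi$.

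For universality of $D \times 2$ against a finite product $F$ of basic sorts: given $\varepsilon > 0$, set $\varepsilon' = \min(\varepsilon, 1/2)$, let $\sigma\colon D \to F$ witness universality of $D$ at $\varepsilon'$, and pull back along the projection to $\tilde\sigma(x,z) = \sigma(x)$. Equipping $D \times 2$ with the sup-metric, any $\varepsilon'$-ball is of the form $B(a,\varepsilon') \times \{z\}$, whose $\tilde\sigma$-image is $\sigma(B(a,\varepsilon'))$, which is $\varepsilon'$-dense in $F$, a fortiori $\varepsilon$-dense. Thus \autoref{prp:UniversalSkolemSort} delivers that $D \times 2$ is universal Skolem.

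For $D \times 2^\bN$: by induction on $n$, iterating the previous case through $D \times 2^{n+1} \cong (D \times 2^n) \times 2$, each $D \times 2^n$ is universal Skolem. The natural truncation maps $D \times 2^{n+1} \twoheadrightarrow D \times 2^n$ are definable and surjective, and $D \times 2^\bN$ is the inverse limit of this system, so \autoref{lem:UniversalSkolemSortInverseLimit} concludes. The entire argument is essentially mechanical given the machinery built up; the only point that wants any thought is the legitimacy of gluing definable maps along the discrete sort $2$, which is unproblematic precisely because the two pieces of the decomposition are clopen.
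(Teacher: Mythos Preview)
Your argument is correct, but the route you take for $D \times 2$ differs from the paper's. You verify the criterion of \autoref{prp:UniversalSkolemSort} directly, gluing Skolem maps along the two clopen pieces and pulling back a universality witness through the projection $D \times 2 \to D$. The paper instead argues that $D$ is already in definable bijection with $D \times 2$: by \autoref{lem:UniversalSkolemSortTypes} the type space $\tS_D(T)$ is Cantor, hence splits into two non-empty clopen pieces $U,V$; the corresponding $D_U, D_V$ are each universal Skolem, and by uniqueness (\autoref{thm:UniversalSkolemBijection}) each is definably bijective with $D$, giving $D \cong D_U \sqcup D_V \cong D \times 2$. The paper's approach is shorter and yields the extra information $D \cong D \times 2$ for free, but it leans on the full uniqueness theorem and on \autoref{lem:UniversalSkolemSortTypes}. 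Your approach is more self-contained and entirely elementary, needing only \autoref{prp:UniversalSkolemSort}. For $D \times 2^\bN$ both proofs coincide: induct to get each $D \times 2^n$ universal Skolem, then apply \autoref{lem:UniversalSkolemSortInverseLimit} to the truncation system. One minor expository point in your universality step: you check that $\varepsilon'$-balls have $\varepsilon$-dense image, whereas the definition requires this for $\varepsilon$-balls; since $\varepsilon' \leq \varepsilon$, every $\varepsilon$-ball contains an $\varepsilon'$-ball, so the conclusion follows, but it would be cleaner to say so explicitly.
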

\begin{proof}
  It follows from \autoref{lem:UniversalSkolemSortTypes} and the uniqueness of the universal Skolem sort that $D$ admits a definable bijection with $D \times 2$.
  Now apply \autoref{lem:UniversalSkolemSortInverseLimit} to the inverse system consisting of $D \times 2^n$.
\end{proof}

\begin{lem}
  \label{lem:UniversalSkolemSortModel}
  Let $D$ be the universal Skolem sort of $T$.
  Then every $a \in D$ is interdefinable with a model, necessarily separable.
  Conversely, if $M \vDash T$ is a separable model, then the set of $a \in D(M)$ that are interdefinable with $M$ is dense in $D(M)$.
\end{lem}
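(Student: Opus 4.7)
The plan is to prove each direction separately: the forward direction uses the Skolem property to extract a model from a point $a$, while the converse uses universality together with strong Skolem maps to build, by approximation, a point $a$ that codes a prescribed separable model.

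For the first direction, given $a \in D(M)$ for some ambient $M \vDash T$, I would define $M_a \subseteq M$ by declaring its interpretation in each basic sort $S$ to be the closure of $\{\tau(a) : \tau \colon D \to S \text{ definable}\}$. The key step is verifying that $M_a$ is an elementary substructure of $M$ via the continuous Tarski--Vaught test: given a formula $\varphi(\bar b, y)$ with $\bar b$ in $M_a$ and $y$ in a basic sort $S$, one approximates $\bar b$ by $\bar\tau(a)$ for a finite tuple of definable maps, and the Skolem property, applied to the composite formula $\varphi(\bar\tau(x), y)$ on $D \times S$, produces a definable $\tau' \colon D \to S$ with $\varphi(\bar\tau(a), \tau'(a)) \leq \inf_y \varphi(\bar\tau(a), y) + \varepsilon$; hence $\tau'(a) \in M_a$ witnesses the infimum up to arbitrary precision. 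Interdefinability is then immediate: the basic-sort components of $a$ all lie in $M_a$, so $a \in D(M_a)$, and every element of $M_a$ is, by construction, a limit of definable functions of $a$. Separability of $M_a$ follows because the space of definable maps $D \to S$ is itself separable (as noted just before \autoref{thm:UniversalSkolemBijection}).

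For the converse, fix a separable $M \vDash T$, some $a_0 \in D(M)$ and $\varepsilon > 0$; we want $a \in D(M)$ with $d(a, a_0) < \varepsilon$ whose definable functions cover a dense subset of $M$. Enumerate a countable dense sequence $(m_k) \subseteq M$ with $m_k$ in a basic sort $S_{i_k}$. By induction on $n$ I would build $a^{(n)} \in D(M)$ (starting from $a_0$) and definable $\tau_n \colon D \to S_{i_n}$ with $\tau_n(a^{(n+1)}) = m_n$ and $d(a^{(n+1)}, a^{(n)}) < \delta_n$, where $\delta_n \leq \varepsilon/2^{n+1}$ is chosen small enough, using the uniform continuity of the already-fixed maps $\tau_0, \ldots, \tau_{n-1}$, that $d(\tau_j(a^{(n+1)}), \tau_j(a^{(n)})) < 2^{-n-1}$ for each $j < n$. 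The inductive step uses \autoref{lem:StrongSkolemMap} applied to the zero formula on $D \times S_{i_n}$: the resulting strong $\delta_n$-Skolem map $\tau_n$ has the property that, for any $a^{(n)} \in D(M)$ and any $m_n \in S_{i_n}(M)$, there exists $a^{(n+1)} \in B(a^{(n)}, \delta_n) \cap D(M)$ with $\tau_n(a^{(n+1)}) = m_n$. The sequence $(a^{(n)})$ is Cauchy in the complete sort $D(M)$ and converges to some $a$ with $d(a, a_0) < \varepsilon$ and $d(\tau_j(a), m_j) \leq \sum_{n > j} 2^{-n-1} = 2^{-j}$, so $\{\tau_j(a) : j \in \bN\}$ is dense in $\{m_k\}$, hence in $M$, making $a$ interdefinable with $M$.

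The main subtlety is the bookkeeping in the converse construction: once $\tau_n$ is fixed at stage $n$, the exact equality $\tau_n(a^{(n+1)}) = m_n$ cannot be preserved in later stages. The resolution is to forgo exact equality in the limit and instead ensure, via the uniform-continuity margins $\delta_n$, that the cumulative drift of each fixed $\tau_j$ is summably small; approximate equality with a summable-error bound is enough to transfer density from $(m_k)$ to $(\tau_j(a))$, which is all that interdefinability requires.
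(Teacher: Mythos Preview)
Your argument is correct. The first direction matches the paper's proof exactly: take the definable closure of $a$ in the basic sorts and verify Tarski--Vaught using Skolem maps.

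For the converse, however, your route is genuinely different and considerably more laborious than the paper's. You hit the basic sorts one at a time, applying a strong Skolem map at each stage and carrying along a Cauchy bookkeeping scheme to control the drift of all earlier maps; in the limit you only get $d(\tau_j(a), m_j) \le 2^{-j}$, and you then need to argue (ideally after arranging that each $m_k$ recurs infinitely often in the enumeration) that this suffices for density. The paper avoids all of this by packaging the entire dense enumeration $b = (m_k)$ as a single point of the infinite product sort $E = \prod_k S_{i_k}$: since $D$ is universal Skolem for $E$ itself, one application of a strong $\varepsilon$-Skolem map for the zero formula on $D \times E$ produces, in any model containing $a_0$ and $b$, some $a' \in B(a_0,\varepsilon)$ with $\sigma a' = b$ exactly. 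No induction, no drift, and $a'$ is immediately interdefinable with $M$. Your approach has the virtue of using only strong Skolem maps into \emph{basic} sorts, but the paper's product-sort shortcut is much cleaner and explains why the ``universal for all sorts, including infinite products'' clause in \autoref{prp:UniversalSkolemSort} is worth having.
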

\begin{proof}
  Assume that $M \vDash T$ and $a \in D(M)$.
  Let $N \subseteq M$ be the definable closure of $a$ in the basic sort(s).
  The existence of Skolem maps implies that $N \preceq M$ (by the Tarski-Vaught Criterion) and that $a$ and $N$ are interdefinable.

  Now let us assume that $M$ is separable, and let $b$ be an enumeration of a dense countable sequence in $M$.
  Let $E$ denote the sort of $b$.
  By universality, for every $\varepsilon > 0$ there exists a definable map $\sigma\colon D \rightarrow E$ such that for all $a \in D(M)$ there exists $a' \in B(a,\varepsilon) \cap D(M)$ such that $\sigma a' = b$.
  Such $a'$ is necessarily interdefinable with $M$, proving density.
\end{proof}

Let us pass to the question of the existence of a universal Skolem sort.
First of all, one need not always exist, as the following (admittedly pathological) example shows.

\begin{exm}
  \label{exm:NoSkolemSort}
  Let $\cL$ be a continuous signature, with bound one on the diameter, and a single unary $1$-Lipschitz $[0,1]$-valued predicate symbol $P$.
  Let $T$ be the theory saying that the distance is always either $0$ or $1$ and $P$ has dense image (i.e., the sentences $\sup_{x,y} d(x,y) \bigl( 1 - d(x,y) \bigr)$ and $\inf_x |P(x) - r|$ vanish for every $r \in [0,1]$).
  In any sufficiently saturated model of $T$, each $r \in [0,1]$ is attained as $P(x)$ for infinitely many possible values of $x$, and a back-and-forth argument between two such models shows that $T$ eliminates quantifiers.
  In particular, $T$ is complete, and $\tS_1(T)$ is the interval $[0,1]$.

  Assume that $T$ admits a Skolem sort $D$, and let $E$ denote the home sort.
  Then there exists a map $\sigma\colon D \rightarrow E$ such that $P(\sigma x) < 1/2$.
  Since $D$ is a sort and $\sigma$ is definable, $\varphi(x) = d(x,\img \sigma)$ is a formula (we may also express it as $\inf_{y \in D} \, d(x,\sigma y)$).
  It is $0/1$-valued, so it cuts $\tS_1(T) = [0,1]$ into two non-trivial clopen sets, a contradiction.

  Therefore $T$ cannot admit a Skolem sort.
\end{exm}

Our definition of a universal Skolem sort was motivated by $D_\Phi$ of \autoref{sec:GroupoidTheoryClassical}.
Let us now justify this formally.

\begin{prp}
  \label{prp:UniversalSkolemSortClassical}
  Assume that $T$ is classical.
  Then viewed as a theory in continuous logic, the set $D_\Phi$, constructed in \autoref{dfn:ClassicalGT}, is a universal Skolem sort.
\end{prp}
\begin{proof}
  Assume that $T$ is single-sorted, for simplicity of the definition of $D_\Phi$ and of the argument presented here.
  That $D_\Phi$ is a definable set, i.e., a sort, follows immediately from the fact that for each $n$, the set of $n$-tuples which can be extended to a member of $D_\Phi$, is definable (by $D_{\Phi,n}$).
  The sort $D_\Phi$ is Skolem for the basic sort $E$ by construction.

  For universality, we may assume that $D_\Phi$ is equipped with the distance $d(x,y) = \inf \, \bigl\{ 2^{-n} : x_{<n} = y_{<n} \bigr\}$.
  Given any $k$ and $\varepsilon > 0$, we may choose $m_0 < m_1 < \cdots < m_{k-1}$ such that each $\varphi_{m_i}$ is always true and $2^{-m_0} < \varepsilon$.
  Then the map $D_\Phi \rightarrow E^k$ that sends $x \mapsto (x_{m_i} : i < k)$ is surjective on any $\varepsilon$-ball, so $D_\Phi$ is universal for $E^k$.
  By \autoref{prp:UniversalSkolemSort}, this is enough.
\end{proof}

When $T$ is $\aleph_0$-categorical, classical or continuous, we can give another construction of a universal Skolem sort.
It generalises \autoref{prp:ClassicalGTAleph0Categorical} to the continuous case (and, in a sense, explains it better).

\begin{prp}
  \label{prp:UniversalSkolemSortAleph0Categorical}
  Assume that $T$ is $\aleph_0$-categorical.
  Let $a$ enumerate a dense subset of a model $M \vDash T$, let $D_0$ be the type of $a$ (a definable set, since $T$ is $\aleph_0$-categorical).
  Then $D = D_0 \times 2^\bN$ is a universal Skolem sort.
\end{prp}
\begin{proof}
  It will suffice to show that $D$ is universal Skolem for every sort $E$.
  Let a variable in $D$ be denoted $\hat{x} = (x,\tilde{x})$, where $x \in D_0$ and $\tilde{x} \in 2^\bN$.

  In order to show that $D$ is a Skolem sort, let $\varphi(\hat{x},y)$ be a formula on $D \times E$ such that $\inf_y \varphi = 0$.
  We may assume that $\varphi$ only depends on the first $k$ entries of $\tilde{x}$ (by density of such formulas).
  In other words, we may view $\varphi$ as a formula on $D_0 \times 2^k \times E$, and write $\varphi(x,\ell,y)$ where $\ell < 2^k$.
  For each $\ell < 2^k$, choose $b_\ell \in E(M)$ such that $\varphi(a,\ell,b_\ell) < \varepsilon$.
  Let $\sigma\colon D_0 \times 2^k \rightarrow E$ be the map which sends $(a,\ell) \mapsto b_\ell$ (and $(a',\ell)$ to the unique $b'$ such that $a'b' \equiv a b_\ell$).
  Then $\sigma$ is definable, and we may view it as a map $\sigma\colon D \rightarrow E$ that only depends on the first $k$ bits.
  It is $\varepsilon$-Skolem by construction.

  In order to show that $D$ is universal, let us fix $\varepsilon$.
  By the Ryll-Nardzewski/Henson characterisation of $\aleph_0$-categoricity (see \cite{BenYaacov-Usvyatsov:dFiniteness}), the type space $\tS_{D_0 \times E}(T)$ is metrically compact, so it contains a finite, $\varepsilon$-dense sequence $(p_\ell : \ell < 2^k)$.
  Let $p_\ell = \tp(a_\ell,b_\ell)$.
  We may choose $a_\ell' \in D_0$ such that $b_\lambda \in \dcl(a_\ell')$ and such that $d(a_\lambda,a_\ell')$ is arbitrarily small.
  We may therefore assume that $b_\ell \in \dcl(a_\ell)$, and in fact that $a_\ell = a$ and $b_\ell \in E(M)$ for all $\ell$.
  Define $\sigma\colon D \rightarrow E$ as in the previous paragraph.
  Now, for any $b \in E(M)$, there exist $\ell$ and $a',b' \vDash p_\ell$, possibly outside $M$, such that $d(a'b',a b_\ell) < \varepsilon$.
  In particular, $\sigma(a',\ell) = b'$, so $\inf_x  d(x,a) \vee d\bigl(\sigma(x,\ell), b\bigr) < \varepsilon$ (we use $\vee$ as infix notation for the maximum).
  This is almost good enough: if we code $\ell$ not in the first $k$ bits, but sufficiently farther along the infinite sequence that is $\tilde{x}$, we obtain, for any $\tilde{a} \in 2^\bN$:
  \begin{gather*}
    \inf_{x,\tilde{x}}  d(x,a) \vee d(\tilde{x},\tilde{a}) \vee d\bigl(\sigma(x,\tilde{x}), b \bigr) < \varepsilon,
  \end{gather*}
  concluding the proof.
\end{proof}

When a universal Skolem sort exists, it allows us to associate to $T$ a canonical (or almost) bi-interpretable theory.

\begin{dfn}
  \label{dfn:TD}
  Let $T$ be a theory and $D$ a universal Skolem sort.
  We define $T^D$ to be the theory of the sort $D$ together with the induced structure.
\end{dfn}

The full induced structure on $D$ is given by naming all formulas with variables in $D$ by predicate symbols.
Since the language of $T$ is assumed countable, the set of all $n$-ary formulas is separable for each $n$, and naming a countable dense subset is just as good.

\begin{lem}
  \label{lem:TD}
  Let $T$ be a theory admitting a universal Skolem sort.
  Then $T^D$ is bi-interpretable with $T$.
  Conversely, up to choice of language, and in particular of distance (among all definable distances), the theory $T^D$ only depends on the bi-interpretation class of $T$, and in particular, does not depend on the choice of universal Skolem sort.
\end{lem}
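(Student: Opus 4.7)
For bi-interpretability of $T$ and $T^D$, one direction is tautological: $T^D$ is by definition the reduct of $T$ to the sort $D$ with its full induced structure, hence is interpretable in $T$. For the reverse interpretation of $T$ in $T^D$, I recover each basic sort $E$ of $T$ as a metric quotient of $D$. Applying \autoref{lem:StrongSkolemMap} to the zero formula yields a strong Skolem map $\sigma\colon D \to E$ which, by \autoref{dfn:StrongSkolemMap}, is surjective onto $E(M)$ in every model $M \vDash T$. The pseudo-distance $d_\sigma(x,y) = d(\sigma x, \sigma y)$ is a formula on $D^2$, hence a symbol of $T^D$, and identifies $E$ with the metric quotient $(D, d_\sigma)$. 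Every formula on $E^k$ pulls back along $\sigma^k$ to a $d_\sigma$-invariant formula on $D^k$, supplying all the predicates of $T$. The two round-trip compositions are definably isomorphic to the identity: in one direction each sort $E$ is recovered via the factored isometric bijection $D/{\sim_{d_\sigma}} \to E$; in the other, $D$ and its induced structure are left unchanged.

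For the second assertion, suppose $T$ and $T'$ are bi-interpretable, with respective universal Skolem sorts $D$ and $D'$. By the characterisation of bi-interpretation recalled before \autoref{thm:GTWellDefined}, there exists a common theory $T^*$ into which both $T$ and $T'$ embed as reducts up to a change of language, so both $D$ and $D'$ become sorts of $T^*$. The crucial step is to verify that each remains universal Skolem in the enriched theory $T^*$: every sort of $T^*$ is, via the bi-interpretation, in definable bijection with a sort of $T$ for which $D$ is already universal Skolem, and this bijection transfers Skolem maps and universality witnesses. By \autoref{thm:UniversalSkolemBijection} applied in $T^*$, there is then a definable bijection $D \cong D'$ in $T^*$. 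Since no new formulas on $D$ are introduced by passing from $T$ to $T^*$ (any such formula is, via the bi-interpretation, already a formula on $D$ in $T$), this definable bijection identifies $T^D$ and $(T')^{D'}$ up to a change of language. The ``in particular'' clause on independence from the choice of universal Skolem sort is the special case $T = T'$.

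The main obstacle is the careful bookkeeping for the second assertion: checking that universality and the Skolem property genuinely lift from $T$ to the enriched $T^*$, and that the full induced structure on $D$ is preserved by this enrichment. Both points rest on the change-of-language reformulation of bi-interpretation; once they are in place, the uniqueness theorem supplies the desired definable bijection directly.
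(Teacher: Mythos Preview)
Your proposal is correct, and the underlying ideas match the paper's, but the paper's proof is far more terse and frames the first step differently. Rather than constructing the two interpretations and checking round-trips, the paper simply introduces the theory $T'$ obtained from $T$ by adjoining $D$ as an additional basic sort with all induced structure, and observes that $T'$ is an interpretation expansion both of $T$ (since $D$ is already a sort of $T$) and of $T^D$ (since ``all sorts are quotients of $D$''); bi-interpretability then follows from the common-expansion characterisation. Your explicit construction of a surjective strong Skolem map $\sigma\colon D \to E$ and the quotient $(D,d_\sigma)$ is precisely the content hidden in the paper's one-line phrase ``all sorts are quotients of $D$''. For the second assertion, the paper invokes \autoref{thm:UniversalSkolemBijection} in a single sentence; your passage to a common expansion $T^*$ and verification that $D$ and $D'$ remain universal Skolem there makes explicit the bookkeeping that the paper leaves implicit. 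In short, the two arguments agree in substance; yours fills in the details that the paper elides, at the cost of the slicker common-expansion framing.
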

\begin{proof}
  Consider the theory $T'$ consisting of $T$ with all its basic sorts, together with $D$ as an additional sort, and all the induced structure on the entire family of sorts.
  This is an interpretation expansion of both $T$ (since $D$ is a sort) and of $T^D$ (since all sorts are quotients of $D$), so $T$ and $T^D$ are bi-interpretable.
  Independence on the choice of $D$ follows from \autoref{thm:UniversalSkolemBijection}.
\end{proof}

\section{The groupoid associated to a theory with a universal Skolem sort}
\label{sec:GroupoidTheory}

Before introducing any hypotheses, let us prove the following technical fact.

\begin{lem}
  \label{lem:DefinableElement}
  Let $T$ be any theory in a countable language.
  \begin{enumerate}
  \item
    \label{item:DefinableElementPolish}
    Let $A$ and $B$ be any two sorts of $T$, and let $X \subseteq \tS_{A,B}(T)$ be the set of types $\tp(a,b)$, where $a \in A$, $b \in B$, and $b$ is definable from $a$.
    Then $X$ is a $G_\delta$ subset of $\tS_{A,B}(T)$.
  \item
    \label{item:DefinableElementComposition}
    Let $C$ be an additional sort, and let $Y \subseteq \tS_{B,C}(T)$ be the set of types $\tp(b,c)$, where $b \in B$, $c \in C$, and $c$ is definable from $b$.
    Let $X \times_B Y$ consist of all pairs $(p,q)$ that agree on the type of the member of $B$.
    Any such pair can be written as $\bigl( \tp(a,b), \tp(b,c) \bigr)$, in which case $c$ is definable from $a$, and we may define a composition $p \circ q = \tp(a,c)$.
    Then $\circ \colon X \times_B Y \rightarrow \tS_{A,C}(T)$ is continuous.
  \end{enumerate}
\end{lem}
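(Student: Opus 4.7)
The plan for part~(\ref{item:DefinableElementPolish}) is to show that for each $\varepsilon > 0$, the set of $p = \tp(a,b)$ such that $\tp(b/a)$ admits two realizations at distance $\geq \varepsilon$ is closed in $\tS_{A,B}(T)$. Indeed, this set is the image under the continuous projection $\tp(a,b_1,b_2) \mapsto \tp(a,b_1)$ from $\tS_{A,B,B}(T)$ of the subset cut out by $\tp(a,b_1) = \tp(a,b_2)$ (a countable intersection, hence closed, of the closed conditions $\varphi(a,b_1) = \varphi(a,b_2)$ for $\varphi$ ranging over a dense family of formulas) together with $d(b_1,b_2) \geq \varepsilon$ (closed since $d$ is a formula). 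By compactness of $\tS_{A,B,B}(T)$, the image is closed. Since $b$ being definable from $a$ is equivalent to $\tp(b/a)$ having a unique realization, $X$ is the countable intersection over $\varepsilon = 1/n$ of the complements of these closed sets, hence $G_\delta$.

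For part~(\ref{item:DefinableElementComposition}), my plan is to interpose the auxiliary space
\[
Z = \bigl\{ \tp(a,b,c) \in \tS_{A,B,C}(T) : b \text{ definable from } a \text{ and } c \text{ definable from } b \bigr\},
\]
which is precisely the preimage of $X \times_B Y$ under the continuous projection $\tS_{A,B,C}(T) \to \tS_{A,B}(T) \times_B \tS_{B,C}(T)$, because each defining condition depends only on the corresponding pair-type. The restriction $\pi\colon Z \to X \times_B Y$ of this projection is a continuous bijection: surjectivity is an amalgamation via homogeneity in a sufficiently saturated monster, and injectivity follows because once homogeneity aligns the $(a,b)$-parts of two types in $Z$ sharing the same projections, the uniqueness of the realization of $\tp(c/b)$ (which holds because $c$ is definable from $b$) forces their $c$-parts to agree. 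The composition then factors as $\circ = \mathrm{pr}_{A,C} \circ \pi^{-1}$, where $\mathrm{pr}_{A,C}\colon \tS_{A,B,C}(T) \to \tS_{A,C}(T)$ is the continuous projection to the $(a,c)$-coordinates, so the problem reduces to continuity of $\pi^{-1}$.

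Continuity of $\pi^{-1}$ is the main technical point, and I would prove it by a sequential compactness argument, which is permissible because the countable language makes all type spaces metrizable. Given $(p_n, q_n) \to (p_0, q_0)$ in $X \times_B Y$, set $r_n = \pi^{-1}(p_n, q_n) \in Z$ and $r_0 = \pi^{-1}(p_0, q_0)$. Any subsequence of $(r_n)$ has, by compactness of $\tS_{A,B,C}(T)$, a sub-subsequence converging to some $r^*$; by continuity of $\pi$, $\pi(r^*) = (p_0, q_0) \in X \times_B Y$, so $r^*$ belongs to $Z$ (which is the preimage of $X \times_B Y$), and then injectivity of $\pi$ forces $r^* = r_0$. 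Hence the full sequence converges to $r_0$, and applying $\mathrm{pr}_{A,C}$ yields $\circ(p_n,q_n) \to \circ(p_0,q_0)$. The crux of the argument is that although $Z$ is merely $G_\delta$ in the compact space $\tS_{A,B,C}(T)$, the limit $r^*$ automatically lies in $Z$, precisely because $Z$-membership is determined entirely by the pair-projections, which by hypothesis land in $X \times_B Y$.
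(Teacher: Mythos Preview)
Your argument is correct in both parts, but proceeds quite differently from the paper.

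For part~(\ref{item:DefinableElementPolish}), the paper writes down, for each $\varepsilon>0$ and each formula $\varphi(x,y)$, an explicit open set $U_{\varepsilon,\varphi}\subseteq\tS_{A,B}(T)$ (cut out by a single formula expressing that $\varphi(a,\cdot)$ is $\varepsilon$-close to being a distance to $b$), and verifies directly that $X=\bigcap_{\varepsilon}\bigcup_{\varphi}U_{\varepsilon,\varphi}$. Your approach is more purely topological: you exhibit the complement of each $\varepsilon$-piece as the continuous image of a closed subset of the compact space $\tS_{A,B,B}(T)$. Both work; the paper's version is more constructive (one sees the witnessing formula), while yours is shorter and avoids any formula manipulation.

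For part~(\ref{item:DefinableElementComposition}), the difference is more substantial. The paper argues directly with neighbourhoods: given a basic open condition $\varphi(x,z)<1$ at $\tp(a,c)$, it uses definability of $c$ from $b$ to rewrite $\varphi(x,c)$ as $\psi(x,b)$, and then produces explicit open neighbourhoods of $p$ and $q$ (via $\psi$ and an auxiliary $\chi(y,z)=\sup_x|\varphi(x,z)-\psi(x,y)|$) whose composition lands in the target neighbourhood. Your route factors the composition through the three-variable type space as $\mathrm{pr}_{A,C}\circ\pi^{-1}$ and proves continuity of $\pi^{-1}$ by a subsequence argument, the crucial point being that $Z$ is the \emph{full} $\Pi$-preimage of $X\times_B Y$, so any subsequential limit in the ambient compact space is automatically forced back into $Z$. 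This is an elegant soft argument; its one cost is the appeal to metrizability (which is fine here, the language being countable), whereas the paper's neighbourhood argument is net-free and would go through without that hypothesis.
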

\begin{proof}
  For $\varepsilon > 0$, and formula $\varphi(x,y)$ in $A \times B$, let $U_{\varepsilon,\varphi} \subseteq \tS_{A,B}(T)$ be the open set defined by
  \begin{gather*}
    \varphi(x,y) \vee \sup_{z,z'} \, \bigl( d(z,z') - \varphi(x,z) - \varphi(x,z') \bigr) < \varepsilon.
  \end{gather*}
  Let
  \begin{gather*}
    V_\varepsilon = \bigcup_\varphi U_{\varepsilon,\varphi},
    \qquad
    W = \bigcap_{\varepsilon > 0} V_\varepsilon.
  \end{gather*}

  Let $p(x,y) = \tp(a,b) \in X$.
  Then $d(y,b)$ is definable with parameter $a$, i.e., $d(y,b) = \varphi(a,y)$ for some formula $\varphi(x,y)$, in which case $p \in U_{\varepsilon,\varphi}$ for all $\varepsilon > 0$, and therefore $p \in W$.
  Conversely, assume that $p \in W$, and let $\varepsilon > 0$.
  Then there exists a formula $\varphi$ such that $p \in U_{\varepsilon,\varphi}$.
  But then the diameter of the set of realisations of $p(a,y)$ is at most $3\varepsilon$, and since $\varepsilon$ was arbitrary, $b$ is the unique realisation of $p(a,y)$, so $p \in X$.
  We conclude that $W = X$, and it is $G_\delta$ by construction.

  Let again $p(x,y) = \tp(a,b) \in X$, and let $q(y,z) = \tp(b,c) \in Y$, so $p \circ q = \tp(a,c)$.
  A neighbourhood of $\tp(a,c)$ can be assumed to be defined by a condition $\varphi(x,z) < 1$, where $\varphi(a,c) = 0$.
  Since $c$ is definable from $b$, we may express $\varphi(x,c)$ as $\psi(x,b)$.
  Let
  \begin{gather*}
    \chi(y,z) = \sup_x \, \bigl| \varphi(x,z) - \psi(x,y) \bigr|.
  \end{gather*}
  Then $\psi(a,b) = \chi(b,c) = 0$, and
  \begin{gather*}
    \bigl(X \cap [\psi < 1/2] \bigr) \circ \bigl(Y \cap [\chi < 1/2] \bigr) \subseteq [\varphi < 1].
    \qedhere
  \end{gather*}
\end{proof}

From this point onward, assume that $T$ is a complete theory in a countable continuous language, admitting a universal Skolem sort $D$.
We let $\tS_{mD}(T)$ denote the space of types in $m$ variables in the sort $D$ (i.e., in $D^m$).

\begin{dfn}
  \label{dfn:GT}
  We define $\bG(T)$ (or $\bG_D(T)$, if we want to be explicit) as the set of all types $\tp(a,b) \in \tS_{2 D}(T)$ such that $\dcl(a) = \dcl(b)$.
  We shall implicitly identify a type $\tp(a,a) \in \bG(T)$ with $\tp(a)$, and let $\bB(T) = \tS_D(T)$ be the collection of all such types.

  The groupoid structure is defined as in \autoref{dfn:ClassicalG0T}:
  \begin{gather*}
    \tp(a,b) \cdot \tp(b,c) = \tp(a,c),
    \qquad
    \tp(a,b)^{-1} = \tp(b,a).
  \end{gather*}
\end{dfn}

\begin{prp}
  \label{prp:GT}
  As defined in \autoref{dfn:GT}, $\bG(T)$ is an open Polish topological groupoid.
  Its base is $\bB(T)$, which is homeomorphic to the Cantor set, and the action $\bG(T) \curvearrowright \bB(T)$ is minimal (i.e., all orbits are dense).
  As a topological groupoid, $\bG(T)$ only depends on the bi-interpretation class of $T$ (in particular, it does not depend on $D$).
\end{prp}
\begin{proof}
  It is easy to check that $\bG(T)$ is a groupoid over $\bB(T)$, with source and target maps given by
  \begin{gather*}
    g = \tp(a,b)
    \qquad
    \Longrightarrow
    \qquad
    t_g = \tp(a), \quad s_g = \tp(b).
  \end{gather*}
  It is a Polish topological groupoid by \autoref{lem:DefinableElement}, and $\bB(T)$ is homeomorphic to the Cantor space by \autoref{lem:UniversalSkolemSortTypes}.
  Since the universal Skolem sort is unique up to a definable bijection, $\bG(T)$ only depends on the bi-interpretation class of $T$.

  To see that $\bG(T) \curvearrowright \bB(T)$ is minimal, let $V = [\varphi(x) > 0] \subseteq \bB(T)$ be a non-empty basic open set, and let $e \in \bB(T)$.
  Then $e = \tp(a)$ for some $a \in D$, which codes a separable model $M$.
  Since $T$ is complete and $V \neq \emptyset$, $T$ must imply that $\sup_x \varphi > 0$, and so there exists $b \in D(M)$ such that $\varphi(b) > 0$.
  By the density clause in \autoref{lem:UniversalSkolemSortModel}, there exists $c \in D(M)$ arbitrarily close to $b$ that codes $M$ as well.
  Taking $d(b,c)$ small enough we have $\varphi(c) > 0$, and $g = \tp(c,a) \in \bG(T)$ sends $e$ into $V$.

  To see that $\bG(T)$ is open, let $U = \bigl[ \varphi(x,y) > 0 \bigr] \subseteq \bG(T)$ be a basic open set, and let $V \subseteq \bB(T)$ be defined by $\sup_x \varphi(x,y) > 0$.
  If $g = \tp(a,b) \in U$, then clearly $\tp(b) \in V$.
  Conversely, if $\tp(b) \in V$, and $M$ is the model coded by $b$, then $b \in D(M)$, so there exists $a \in D(M)$ such that $\varphi(a,b) > 0$.
  By \autoref{lem:UniversalSkolemSortModel}, there exists $a' \in D(M)$ arbitrarily close to $a$ such that $\dcl(a') = M$, i.e., $g = \tp(a,b) \in \bG(T)$.
  Taking $d(a',a)$ small enough we have $\varphi(a,b) > 0$, i.e., $g \in U$.
  In either case, $s_g = \tp(b)$, so $V = s(U)$ and $\bG(T)$ is open.
\end{proof}

When $T$ is classical, the sort $D_\Phi$ is universal Skolem by \autoref{prp:UniversalSkolemSortClassical}, so our construction generalises that of \autoref{sec:GroupoidTheoryClassical}.
When $T$ is $\aleph_0$-categorical, if $G(T) = \Aut(M)$ for any separable $M \vDash T$, then $\bG(T) = 2^\bN \times G(T) \times 2^\bN$, by \autoref{prp:UniversalSkolemSortAleph0Categorical}, generalising \autoref{prp:ClassicalGTAleph0Categorical}.


We turn to the reconstruction of $T$, up to bi-interpretation, from the topological groupoid $\bG = \bG(T)$, relative to some fixed universal Skolem sort $D$.
We shall attempt to keep this as close as possible to what was done in \autoref{sec:ClassicalReconstruction}, despite some unavoidable differences.
Our precise aim is to recover the theory $T^D$, in the single sort $D$ (and not in all the interpretable sorts, of which there are uncountably many).
Similarly, aiming to recover a metric sort (rather than discrete ones), the role of clopen sub-groupoids will be taken over by compatible (semi-)norms.

\begin{dfn}
  \label{dfn:NeighbourhoodOfSet}
  Let $X$ be a topological space.
  By a \emph{neighbourhood} of a (usually compact) subset $K \subseteq X$ we mean any set containing an open set containing $K$.
  A \emph{basis of neighbourhoods} for $K$ is a family of neighbourhoods that is cofinal among all neighbourhoods with respect to inverse inclusion.
\end{dfn}

\begin{dfn}
  \label{dfn:Norm}
  A \emph{semi-norm} on a groupoid $\bG$ is a function $\rho \colon \bG \rightarrow \bR^+$ which vanishes on $\bB$ and satisfies
  \begin{gather*}
    \rho(g) = \rho(g^{-1}), \qquad \rho(fg) \leq \rho(f) + \rho(g) \qquad \text{when $fg$ is defined}.
  \end{gather*}
  It is a \emph{norm} if it vanishes only on $\bB$, and it is \emph{compatible} (with the topology) if it continuous and the sets $\{\rho < r\} = \bigl\{ g \in \bG : \rho(g) < r \bigr\}$ form a basis of neighbourhoods for $\bB$.
\end{dfn}

Clearly, any two compatible norms $\rho$ and $\rho'$ must be \emph{uniformly equivalent}: for every $\varepsilon > 0$ there exists $\delta > 0$ such that $\{\rho < \delta\} \subseteq \{\rho' < \varepsilon\}$ and vice versa.
However, a compatible norm on a topological groupoid does not suffice to recover the topology (while it does for a topological group), and assuming that $\{\rho < r\}$ forms a basis of neighbourhoods for $\bB$ does not imply that $\rho$ is continuous.
For our purposes it will suffice to keep in mind the analogy with \autoref{sec:ClassicalReconstruction}: a $0/1$-valued continuous semi-norm is the same thing as the \emph{$0$-characteristic function} of a clopen sub-groupoid $\bH \leq \bG$ that contains $\bB$ (i.e., $\rho(g) = 0$ if $g \in \bH$ and $\rho(g) = 1$ otherwise).

Let us start by considering formulas in two variables (all in $D$), since the generalisation to more variables is straightforward.
Such a formula $\varphi(x,y)$ defines a continuous bounded function that will also be denoted $\varphi \colon \tS_{2 D}(T) \rightarrow \bR$.
Its restriction to $\bG$ will be denoted $\varphi_\bG$.
We may also write
\begin{gather*}
  [\varphi < r] = \bigl\{ p \in \tS_{2D}(T) : \varphi(p) < r \bigr\},
  \qquad
  [\varphi < r]_\bG = [\varphi < r] \cap \bG.
\end{gather*}

Given any two bounded functions $\xi,\zeta\colon \bG \rightarrow \bR$, let us define
\begin{gather*}
  (\xi * \zeta)(f) = \inf \, \bigl\{ \xi(g) + \zeta(h) : f = gh \bigr\}.
\end{gather*}
In particular, any semi-norm satisfies $\rho * \rho = \rho$.
The analogue of \autoref{lem:ClassicalReconstructionProductOfFormulas} is:

\begin{lem}
  \label{lem:ContinuousReconstructionProductOfFormulas}
  Let $\varphi(x,y)$ and $\psi(y,z)$ be formulas with variables in $D$, and let $\chi(x,z)$ be the formula $\inf_y (\varphi + \psi)$.
  Then
  \begin{gather*}
    \varphi_\bG * \psi_\bG = \chi_\bG.
  \end{gather*}
\end{lem}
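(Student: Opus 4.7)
The identity has two directions, and the easy inequality goes as follows. Suppose $f = \tp(a,c) \in \bG$ admits a decomposition $f = gh$ with $g,h \in \bG$; realising $g,h$ with a common middle term, write $g = \tp(a,b)$, $h = \tp(b,c)$ with $\dcl(a) = \dcl(b) = \dcl(c)$. Then
\begin{gather*}
  \varphi_\bG(g) + \psi_\bG(h) = \varphi(a,b) + \psi(b,c) \geq \inf_y \bigl( \varphi(a,y) + \psi(y,c) \bigr) = \chi_\bG(f),
\end{gather*}
which upon taking the infimum over decompositions gives $(\varphi_\bG * \psi_\bG)(f) \geq \chi_\bG(f)$.

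The content is the reverse inequality, for which the obstacle is that $\chi_\bG(f)$ is an infimum over \emph{all} $y$ in a sufficiently saturated model, whereas in $\varphi_\bG * \psi_\bG$ we can only use $b$ with $\dcl(b) = \dcl(a) = \dcl(c)$; in particular, $b$ must already live inside the model coded by $a$ (equivalently, by $c$). This is precisely where the density clause of \autoref{lem:UniversalSkolemSortModel} does the work.

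Fix $f = \tp(a,c) \in \bG$ and $\varepsilon > 0$. Since $a$ and $c$ are interdefinable, they code a common separable model $M$. Because $\chi$ is a formula and $M \vDash T$, its value at $(a,c)$ is computed inside $M$, so there exists $b \in D(M)$ with $\varphi(a,b) + \psi(b,c) < \chi(a,c) + \varepsilon/2$. By the density assertion of \autoref{lem:UniversalSkolemSortModel}, the set of elements of $D(M)$ that are interdefinable with $M$ is dense in $D(M)$; using the uniform continuity of $\varphi$ and $\psi$ in the $y$-variable, we can replace $b$ by some $b' \in D(M)$ arbitrarily close to $b$ with $\dcl(b') = M$, still satisfying $\varphi(a,b') + \psi(b',c) < \chi(a,c) + \varepsilon$.

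Then $g = \tp(a,b')$ and $h = \tp(b',c)$ lie in $\bG$, $f = gh$, and $\varphi_\bG(g) + \psi_\bG(h) < \chi_\bG(f) + \varepsilon$. Since $\varepsilon$ was arbitrary, $(\varphi_\bG * \psi_\bG)(f) \leq \chi_\bG(f)$, completing the proof. The generalisation to formulas in more variables, analogous to the second part of \autoref{lem:ClassicalReconstructionProductOfFormulas}, proceeds in exactly the same way: one picks middle witnesses in $M$ for each component formula and approximates them by elements interdefinable with $M$, invoking uniform continuity.
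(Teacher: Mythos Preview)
Your proof is correct and follows essentially the same approach as the paper's: both dispatch the inequality $\varphi_\bG * \psi_\bG \geq \chi_\bG$ immediately, and for the reverse use that $a,c$ code a common separable model $M$, find an approximate witness $b \in D(M)$ for the infimum, and then invoke the density clause of \autoref{lem:UniversalSkolemSortModel} together with uniform continuity to replace $b$ by some $b'$ coding $M$, yielding $g = \tp(a,b')$, $h = \tp(b',c) \in \bG$ with $f = gh$. Your write-up is slightly more explicit about why the witness can be taken in $M$ and adds the (correct) remark on the multi-variable generalisation, but the argument is the same.
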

\begin{proof}
  The inequality $\geq$ is clear.
  For the opposite inequality assume that $f = \tp(a,c) \in \bG$ and $\chi_\bG(f) = \chi(a,c) < r$.
  Then $a$ and $c$ both code the same separable model $M$, and there exists $b \in D(M)$ such that $\varphi(a,b) + \psi(b,c) < r$.
  By \autoref{lem:UniversalSkolemSortModel}, we can find $b' \in D(M)$ that also codes $M$ arbitrarily close to $b$.
  This means that $g = \tp(a,b') \in \bG$ and $h = \tp(b',c) \in \bG$.
  Since formulas are always uniformly continuous, we may choose $b'$ close enough to $b'$ that $\varphi_\bG(g) + \psi_\bG(h) = \varphi(a,b') + \psi(b',c) < r$.
  In addition, $f = gh$, so $(\varphi_\bG * \psi_\bG)(f) < r$ as well.
\end{proof}

It follows that if $d$ is any definable distance on $D$ (and we might as well fix one now), then $d_\bG$ is a continuous norm on $\bG$.
The following is the analogue of \autoref{lem:ClassicalReconstructionSubGroupoidContainsEn}:

\begin{lem}
  \label{lem:ContinuousReconstructionCompatibleNorm}
  If $d$ is a definable distance on $D$, then $d_\bG$ is a compatible norm on $\bG$.
\end{lem}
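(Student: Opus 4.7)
The plan is to verify the semi-norm axioms directly on $\bG$, upgrade to a norm using that $d$ separates points on $D$, and then extract the neighbourhood-basis property from compactness of the ambient type space $\tS_{2D}(T)$.

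First, I would check the algebraic properties of $d_\bG$. On $\bB$, a type has the form $\tp(a,a)$, so $d_\bG$ vanishes. Symmetry is immediate from $d(a,b) = d(b,a)$. For the triangle inequality, if $fg$ is defined in $\bG$, write $f = \tp(a,b)$, $g = \tp(b,c)$, so that $fg = \tp(a,c)$; then $d(a,c) \leq d(a,b) + d(b,c)$ gives $d_\bG(fg) \leq d_\bG(f) + d_\bG(g)$. To see that $d_\bG$ is a norm (not merely a semi-norm), suppose $d_\bG(\tp(a,b)) = 0$; since $d$ is a genuine distance on the sort $D$, this forces $a = b$, whence $\tp(a,b) = \tp(a,a) \in \bB$. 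Continuity is free: $d$ is a formula, so it defines a continuous function on the type space $\tS_{2D}(T)$, and $d_\bG$ is the restriction of this continuous function to $\bG$.

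The heart of the argument is that $\{d_\bG < r\}_{r>0}$ forms a basis of neighbourhoods for $\bB$ in $\bG$. Here I would exploit that $\tS_{2D}(T)$ is compact (this is the standard compactness of type spaces in continuous logic over a countable language, as opposed to $\bG$ itself, which is merely Polish by \autoref{lem:DefinableElement}\eqref{item:DefinableElementPolish}). Observe that $\bB$ is identified with the closed subset $\{p \in \tS_{2D}(T) : d(p) = 0\}$ of $\tS_{2D}(T)$, since $d(a,b) = 0$ forces $a = b$, and $\tp(a,a) \in \bG$ automatically. In particular $\bB$ is compact.

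Now let $U \subseteq \bG$ be any open neighbourhood of $\bB$, and write $U = V \cap \bG$ with $V$ open in $\tS_{2D}(T)$. The set $K = \tS_{2D}(T) \setminus V$ is closed in the compact space $\tS_{2D}(T)$, hence compact, and disjoint from $\bB = \{d = 0\}$. Therefore $d$ is strictly positive on $K$, and by compactness attains a positive minimum $r > 0$ on $K$. Consequently $\{d < r\} \subseteq V$, and intersecting with $\bG$ gives $\{d_\bG < r\} \subseteq U$, as required. There is no serious obstacle here; the only subtlety is remembering to work in the compact type space $\tS_{2D}(T)$ rather than in the merely Polish groupoid $\bG$, since a lower bound on $d$ over a non-compact set need not be positive.
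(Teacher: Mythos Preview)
Your proof is correct. The verification of the semi-norm axioms, the upgrade to a norm, and continuity are routine and agree with what the paper records just before the lemma (as a consequence of \autoref{lem:ContinuousReconstructionProductOfFormulas}).

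For the neighbourhood-basis property you take a genuinely different route. The paper works locally inside $\bG$: for each $e \in \bB$ it selects a basic neighbourhood $[\varphi_e < 1]_\bG \subseteq U$ with $\varphi_e(e) = 0$, uses uniform continuity of $\varphi_e$ to find $r_e > 0$ such that $[\varphi_e(x,x) < 1/2]_\bG \cap [d < r_e]_\bG \subseteq [\varphi_e < 1]_\bG$, and then invokes compactness of $\bB$ to extract a finite subcover and set $r = \min_i r_{e_i}$, exactly in parallel with \autoref{lem:ClassicalReconstructionSubGroupoidContainsEn}. You instead pass to the compact ambient space $\tS_{2D}(T)$, observe that $\bB$ coincides there with the zero-set $\{d = 0\}$, and read off a positive lower bound for $d$ on the compact complement of $V$. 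Your argument is shorter and does not invoke uniform continuity of formulas explicitly; the paper's version stays inside $\bG$ and preserves the visible analogy with the classical case. Both are valid.
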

\begin{proof}
  We still need to show that every neighbourhood $U$ of $\bB$ contains a set of the form $\{d_\bG < r\}$.
  If $e \in \bB$, then $U$ contains a basic neighbourhood of $e$, namely of the form $[\varphi < 1]_\bG = \bigl\{g \in \bG : \varphi(g) < 1 \bigr\}$ for some formula $\varphi(x,y)$ that vanishes at $e$.
  Since $\varphi$ is uniformly continuous, for $r > 0$ small enough we have $e \in [\varphi(x,x) < 1/2]_\bG \cap [d(x,y) < r]_\bG$.
  From this point we proceed as in the proof of \autoref{lem:ClassicalReconstructionSubGroupoidContainsEn}, using compactness of $\bB$ to find a finite cover $\bB \subseteq \bigcup_{i<k} [\varphi(x,x) < 1/2]_\bG$ and $r>0$ that works for all $e_i$, so $[d < r]_\bG = \{d_\bG < r\} \subseteq U$.
\end{proof}

The analogy of the next steps is somewhat less clear: we work exclusively within the sort $D$, so the projection $\pi$ of \autoref{sec:ClassicalReconstruction} has no analogue.
Still, in some twisted way, the following is at least related to \autoref{lem:ClassicalReconstructionProjections}.

\begin{lem}
  \label{lem:ContinuousReconstructionOpenThickening}
  Let $U \subseteq \bG$ be open, $d$ be a definable distance on $D$, and $\delta > 0$.
  Define $V = (U)_{d<\delta} \subseteq \tS_{2D}(T)$ to be the set of all $p = \tp(a,b) \in \tS_{2 D}(T)$ for which there exists $g = \tp(c,d) \in U$ with $d(a,c) \vee d(b,d) < \delta$.
  Then $V$ is open in $\tS_{2 D}(T)$.
\end{lem}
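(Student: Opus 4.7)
The plan is to produce, around each point $p_0 \in V$, a basic open neighborhood in $\tS_{2D}(T)$ defined by a single formula. The main obstacle is that the witness required for membership in $V$ must lie in $\bG$, which is only $G_\delta$ in the ambient type space; a naive ``open thickening'' argument in $\tS_{2D}(T)$ would produce only a witness in $\tS_{2D}(T)$, not in $\bG$. The density statement in \autoref{lem:UniversalSkolemSortModel} is precisely what bridges this gap.

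Fix $p_0 = \tp(a_0, b_0) \in V$ with witness $g_0 = \tp(c_0, d'_0) \in U$, and choose $\eta > 0$ with $d(a_0, c_0) \vee d(b_0, d'_0) < \delta - \eta$. Since $U$ is open in $\bG$, pick a basic open $[\varphi < r]_\bG \subseteq U$ containing $g_0$, and set $r_0 = \varphi(c_0, d'_0) < r$. Consider
\[
  \psi(x,y) = \inf_{c, d' \in D} \max\bigl(0,\; d(x,c) - (\delta - \eta),\; d(y,d') - (\delta - \eta),\; \varphi(c, d') - r_0 \bigr),
\]
which is a formula on $D \times D$ (by the closure of the class of sorts under products and of formulas under $\inf$ and the continuous connectives), and satisfies $\psi(a_0, b_0) = 0$ (witnessed by $c = c_0$, $d' = d'_0$). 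Hence $[\psi < \varepsilon]$ is an open neighborhood of $p_0$ in $\tS_{2D}(T)$ for every $\varepsilon > 0$.

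I claim that for $\varepsilon$ small enough, $[\psi < \varepsilon] \subseteq V$. Given $p = \tp(a,b) \in [\psi < \varepsilon]$, realize $p$ in a separable model $M \vDash T$. Since the infimum in $\psi$ is approximately realized in $M$, there exist $c, d' \in D(M)$ with $d(a, c), d(b, d') < \delta - \eta + \varepsilon$ and $\varphi(c, d') < r_0 + \varepsilon$. These need not be interdefinable; but by \autoref{lem:UniversalSkolemSortModel}, we may find $c', d'' \in D(M)$ arbitrarily close to $c$ and $d'$ respectively, each interdefinable with $M$, so that $\tp(c', d'') \in \bG$. Choosing $\varepsilon < \eta/2$ and using the uniform continuity of $d$ and of $\varphi$ to control the perturbation, we obtain $d(a, c') \vee d(b, d'') < \delta$ and $\varphi(c', d'') < r$, so $\tp(c', d'') \in [\varphi < r]_\bG \subseteq U$ witnesses $p \in V$. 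The crux, as flagged above, is the interdefinable-perturbation step: without \autoref{lem:UniversalSkolemSortModel}, an open condition on $\tS_{2D}(T)$ could not be converted into a witness in $\bG$.
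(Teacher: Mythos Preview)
Your argument is correct and follows essentially the same route as the paper's: define a formula of the form $\inf_{u,v}\bigl(\varphi(u,v)\vee d(x,u)\vee d(y,v)\bigr)$ (suitably scaled), use the strict inequality to leave room for perturbation, and invoke \autoref{lem:UniversalSkolemSortModel} to replace approximate witnesses in $D(M)$ by ones that code $M$, hence lie in $\bG$. The only difference is bookkeeping: you introduce auxiliary slack parameters $\eta$, $r_0$, $\varepsilon$, whereas the paper normalizes to $[\varphi<1]_\bG$ and divides by $\delta$ so that a single condition $\chi<1$ does the work; note that your ``$\varepsilon$ small enough'' must also satisfy $\varepsilon < r - r_0$ (not just $\varepsilon<\eta/2$), which you use implicitly when controlling $\varphi$.
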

\begin{proof}
  Indeed, let $p \in V$, as witnessed by $g \in U$.
  Since $U$ is open, there exists a basic open set $U_0 = [\varphi < 1]_\bG$ such that $h \in U_0 \subseteq U$.
  Let
  \begin{gather*}
    \chi(x,y) = \inf_{u,v} \left[ \varphi(u,v) \vee \frac{d(x,u)}{\delta} \vee \frac{d(y,v)}{\delta} \right].
  \end{gather*}
  Clearly, $p \in [\chi < 1]$.

  Assume now that $\chi(a',b') < 1$.
  This is witnessed by some $c',d'$ such that $\varphi(c',d') < 1$ and $d(a',c') \vee d(a',d') < \delta$.
  Since every formula is uniformly continuous, this remains true if we move $c'$ and $d'$ by a sufficiently small amount.
  In particular, by \autoref{lem:UniversalSkolemSortModel}, we may assume that $c'$ and $d'$ that both code some model $M$.
  Then $\tp(c',d') \in U$, and it witnesses that $\tp(a',b') \in V$.

  We have thus shown that $p \in [\chi < 1] \subseteq V$, so $V$ is indeed open.
\end{proof}

At any rate, the following is analogous to \autoref{lem:ClassicalReconstructionFormulaClopenEnInvariant}.

\begin{dfn}
  \label{dfn:ContinuousReconstructionUCC}
  Say that a continuous function $\xi\colon \bG \rightarrow \bR$ is \emph{uniformly continuous and continuous}, or \emph{UCC}, if it is continuous, and in addition, for every $\varepsilon > 0$ there exists a neighbourhood $U$ of $\bB$ such that $|\xi(g) - \xi(h)| < \varepsilon$ whenever $h \in U g U$.
\end{dfn}

\begin{rmk}
  \label{rmk:ContinuousReconstructionUCC}
  If $\rho$ is any compatible norm, then $\xi$ is UCC if and only if it is continuous, and for every $\varepsilon > 0$ there exists $\delta > 0$ such that $|\xi(g) - \xi(fgh)| < \varepsilon$ whenever $fgh$ is defined and $\rho(f) \vee \rho(h) < \delta$.
\end{rmk}

\begin{lem}
  \label{lem:ContinuousReconstructionUCC}
  If $\varphi(x,y)$ is a formula, then $\varphi_\bG\colon \bG \rightarrow \bR$ is UCC, and conversely, every UCC function on $\bG$ is of this form, for a unique formula $\varphi$.
\end{lem}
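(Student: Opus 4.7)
The forward direction is straightforward: if $\varphi(x,y)$ is a formula, it is uniformly continuous on $D \times D$, so choose $\delta > 0$ such that $d(a,a') \vee d(b,b') < \delta$ implies $|\varphi(a,b) - \varphi(a',b')| < \varepsilon$. If $g = \tp(a,b)$ and $fgk = \tp(a',b')$ is defined in $\bG$ with $d_\bG(f), d_\bG(k) < \delta$, then writing $f = \tp(a',a)$ and $k = \tp(b,b')$ we have $d(a,a'), d(b,b') < \delta$, so $|\varphi_\bG(g) - \varphi_\bG(fgk)| < \varepsilon$; \autoref{rmk:ContinuousReconstructionUCC} then yields the UCC condition.

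For the converse, since formulas in continuous logic are precisely the continuous real-valued functions on the type space, the plan is to extend $\xi$ continuously from $\bG$ to $\tS_{2D}(T)$. Both uniqueness and the very meaning of the extension hinge on density of $\bG$ in $\tS_{2D}(T)$, which I verify first: given $p = \tp(a,b) \in \tS_{2D}(T)$ realized in a model $M$, pick a separable $N \preceq M$ containing $a, b$ by Löwenheim-Skolem, and use the density clause of \autoref{lem:UniversalSkolemSortModel} to find $a_0, b_0 \in D(N)$ arbitrarily $d$-close to $a, b$ and both interdefinable with $N$, so that $\tp(a_0, b_0) \in \bG$ is Stone-close to $p$ by uniform continuity of formulas. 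The core of the argument is then the oscillation estimate: for every $p \in \tS_{2D}(T)$ and $\varepsilon > 0$, produce a Stone-open neighborhood $V$ of $p$ in $\tS_{2D}(T)$ such that $|\xi(g) - \xi(h)| < \varepsilon$ for all $g, h \in V \cap \bG$.

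The plan for the oscillation estimate is as follows. Fix $\delta > 0$ given by UCC (via \autoref{rmk:ContinuousReconstructionUCC}) for threshold $\varepsilon/4$, realize $p$ in a sufficiently saturated $M^*$, pick separable $N \preceq M^*$ containing the realization $(a,b)$, and build a canonical representative $g_0 = \tp(a_0,b_0) \in \bG$ as above, within $\delta/3$ of $(a,b)$ and with $\dcl(a_0) = \dcl(b_0) = N$. Stone-continuity of $\xi$ at $g_0$ provides a $\bG$-neighborhood $U_0$ of $g_0$ with $\xi$-oscillation $< \varepsilon/4$; \autoref{lem:ContinuousReconstructionOpenThickening} then thickens $U_0$ to a Stone-open $V = (U_0)_{d<\delta/3} \subseteq \tS_{2D}(T)$ containing $p$. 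For arbitrary $q = \tp(c,e) \in V \cap \bG$, the definition of $V$ provides a witness $h = \tp(c',e') \in U_0$ with $d(c,c'), d(e,e') < \delta/3$ inside a common saturated model; applying the density clause of \autoref{lem:UniversalSkolemSortModel} inside a separable elementary substructure containing all of $c, e, c', e'$, I adjust $q$ and $h$ to $q_*, h_* \in \bG$ whose components are all interdefinable with a single common separable model $N_0$, each within $\eta > 0$ of its original. With $\eta$ small, $q_* = f h_* k$ where $f = \tp(c_*, c'_*)$ and $k = \tp(e'_*, e_*)$ now lie in $\bG$ and satisfy $d_\bG(f), d_\bG(k) < \delta$; UCC gives $|\xi(q_*) - \xi(h_*)| < \varepsilon/4$, Stone-continuity of $\xi$ at $q$ and $h$ (for $\eta$ small enough) absorbs the $*$-adjustments, and $h \in U_0$ yields $|\xi(h) - \xi(g_0)| < \varepsilon/4$, summing to $|\xi(q) - \xi(g_0)| < \varepsilon$. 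The main obstacle is precisely that Stone-closeness in $\tS_{2D}(T)$ does not imply $d$-closeness of realizations — which is what UCC needs — and this is bridged by the combined use of saturation (to realize Stone-close types in a common model with $d$-close representatives), the density clause of \autoref{lem:UniversalSkolemSortModel} (to adjust representatives to code a common separable model, so UCC applies), and \autoref{lem:ContinuousReconstructionOpenThickening} (to package the estimate into a genuine Stone-open neighborhood of $p$).
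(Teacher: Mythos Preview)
Your proof is correct and follows essentially the same approach as the paper's: extend the UCC function continuously to $\tS_{2D}(T)$ by establishing an oscillation estimate at each point, using \autoref{lem:ContinuousReconstructionOpenThickening} to produce the Stone-open neighbourhood and the density clause of \autoref{lem:UniversalSkolemSortModel} to align all relevant elements in a common separable model so that UCC applies. The paper phrases the final step as a proof by contradiction (assume $\xi(h') > 2\varepsilon$ and derive an inconsistency) rather than your direct $\varepsilon/4$ bookkeeping, but the substance is identical.
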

\begin{proof}
  The first assertion follows from standard facts: every formula is a uniformly continuous function of its arguments, and a continuous function of their types.
  For the converse, it will suffice to prove that a UCC function $\xi\colon \bG \rightarrow \bR$ extends to a (necessarily unique) continuous function on $\tS_{2 D}(T)$.

  For this, let $p = \tp(a,b) \in \tS_{2 D}(T)$ and $\varepsilon > 0$ be given.
  Let $d$ be a definable distance on $D$, and fix $\delta > 0$ as in \autoref{rmk:ContinuousReconstructionUCC}, for $\rho = d_\bG$.
  We may choose a separable model $M$ that contains both $a$ and $b$.
  By \autoref{lem:UniversalSkolemSortModel} we may choose $c$ and $d$ that code $M$, and in addition $d(a,c) \vee d(b,d) < \delta$.
  In particular, $g = \tp(c,d)$ belongs to $\bG$.

  Without loss of generality we may assume that $\xi(g) = 0$, and let $U = \{|\xi| < \varepsilon\}$, an open subset of $\bG$.
  Let $V = (U)_{d<\delta} \subseteq \tS_{2 D}(T)$ as in \autoref{lem:ContinuousReconstructionOpenThickening}.
  Then $V$ is open in $\tS_{2 D}(T)$ and $p \in V$ by construction.
  In order to finish the proof, it will suffice to show that $|\xi| \leq 2\varepsilon$ on $V \cap \bG$.

  So let $h' = \tp(a',b') \in V \cap \bG$, and assume toward a contradiction that $\xi(h') > 2\varepsilon$.
  Let $g' \in U$ witness that $h' \in V$, so $g' = \tp(c',d')$ and $d(a',c') \vee d(b',d') < \delta$.

  We can find a basic open set $g' \in U_0 = [\psi < 1]_\bG \subseteq U$.
  Since $\psi$ is uniformly continuous, if $g'' = \tp(c'',d'') \in \bG$ and $c''$ and $d''$ are close enough to $c'$ and $d'$, then
  \begin{gather*}
    \bigl|\psi(c',d') - \psi(c'',d'') \bigr| < 1 - |\psi(c',d')|,
  \end{gather*}
  so $g'' \in U_0 \subseteq U$ as well.
  A similar consideration applies for $h' \in W = \{\xi > 2\varepsilon\}$.
  We may now apply \autoref{lem:UniversalSkolemSortModel} to find $a''$, $b''$, $c''$ and $d''$ that code a common model $M$ and are sufficiently close to $a'$, $b'$, $c'$ and $d'$, respectively, that
  \begin{gather*}
    g'' = \tp(c'',d'') \in U,
    \qquad
    h'' = \tp(a'',b'') \in W,
    \qquad
    d(a'',c'') \vee d(b'',d'') < \delta.
  \end{gather*}
  But now
  \begin{gather*}
    g'' = \tp(c'',a'') \cdot h'' \cdot \tp(b'',d''),
  \end{gather*}
  so $|\xi(h'') - \xi(g'')| < \varepsilon$ by choice of $\delta$, a contradiction.

  To sum up, for every $p \in \tS_{2 D}(T)$ and $\varepsilon > 0$ we found an open neighbourhood $V$ of $p$ such that $\xi$ varies by no more than $4\varepsilon$ on $V \cap \bG$.
  It follows that $\xi$ can be extended to a continuous function on $\tS_{2 D}(T)$, i.e., to a formula.
\end{proof}

The following is clearly analogous to \autoref{lem:ClassicalReconstructionSorts}.
If $\rho$ is a (semi-)norm and $f,g \in \bG$ have the same target, let $d_L^\rho(f,g) = \rho(f^{-1},g)$, which defines a (pseudo-)distance on $e \bG$ for each $e \in \bB$ (the $L$ stands for \emph{left-invariant}: $d_L^\rho(f,g) = d_L^\rho(hf,hg)$ whenever $t_f = t_g = s_h$).

\begin{lem}
  \label{lem:ContinuousReconstructionDistance}
  The map $d \mapsto d_\bG$ defines a bijection between definable distances on $D$ and compatible norms on $\bG$.

  In addition, let $d$ be such a distance, let $a \in D$ code $M$ and $e = \tp(a)$, and let $D_0 \subseteq D(M)$ be the set of $b \in D(M)$ that code $M$.
  Then $\tp(a,b) \mapsto b$ is an isometric bijection of $(e \bG,d_L^\rho)$ with $(D_0,d)$, that extends to an isometric bijection
  \begin{gather*}
    \widehat{(e\bG,d_L^\rho)} \cong (D,d).
  \end{gather*}
\end{lem}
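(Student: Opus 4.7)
The plan is to handle the two assertions in turn. The forward direction of the claimed bijection, from a definable distance $d$ to the compatible norm $d_\bG$, is Lemma \ref{lem:ContinuousReconstructionCompatibleNorm}. For the converse, given a compatible norm $\rho$, I first show $\rho$ is UCC in the sense of Definition \ref{dfn:ContinuousReconstructionUCC}. Indeed, the set $U = \{\rho < \varepsilon/2\}$ is an open neighbourhood of $\bB$, and if $h = fgk$ with $f, k \in U$, two applications of the triangle inequality give $|\rho(g) - \rho(h)| \leq \rho(f) + \rho(k) < \varepsilon$. Lemma \ref{lem:ContinuousReconstructionUCC} then produces a unique formula $\varphi(x,y)$ on $D \times D$ with $\varphi_\bG = \rho$, and it remains to check that $\varphi$ is a definable distance.

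For this, the algebraic properties of $\rho$ are known on the dense set of types of pairs coding the same separable model, and I transfer them to $\varphi$ by density and continuity. Given $a, b, c \in D$, I pick a separable $M \vDash T$ containing the definable closures of all three and, by Lemma \ref{lem:UniversalSkolemSortModel}, approximate each by $a_n, b_n, c_n \in D_0(M)$. Since the relevant types $\tp(a_n, b_n)$, $\tp(b_n, c_n)$, $\tp(a_n, c_n)$ all lie in $\bG$, the norm axioms for $\rho$ yield the corresponding identities for $\varphi$ on the approximants, and continuity of $\varphi$ passes them to the limit: $\varphi(a,a) = 0$, $\varphi(a,b) = \varphi(b,a)$, and $\varphi(a,c) \leq \varphi(a,b) + \varphi(b,c)$. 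Uniform equivalence of $\varphi$ with a fixed definable distance $d_0$ (and in particular non-degeneracy of $\varphi$) follows from the same density argument applied to the mutual uniform equivalence of the compatible norms $\rho$ and $d_0{}_\bG$. Injectivity of $d \mapsto d_\bG$ is immediate, since two definable distances agreeing on a dense set must agree everywhere.

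For the second assertion, fix $a \in D$ coding $M$, $e = \tp(a)$. Any $g \in e\bG$ has target $e$ and is thus of the form $g = \tp(a, b)$ with $\dcl(b) = \dcl(a) = M$, i.e.\ $b \in D_0$; conversely each $b \in D_0$ gives such a $g$, and since $a$ codes $M$ the element $b$ is uniquely determined by $\tp(a,b)$, so $\tp(a,b) \mapsto b$ is a bijection $e\bG \to D_0$. The isometry is a direct computation using the already-established correspondence $\rho = d_\bG$:
\begin{gather*}
  d_L^\rho\bigl(\tp(a,b), \tp(a,c)\bigr) = \rho\bigl(\tp(b,a) \cdot \tp(a,c)\bigr) = \rho(\tp(b,c)) = d(b,c).
\end{gather*}
Lemma \ref{lem:UniversalSkolemSortModel} supplies the density of $D_0$ in $D(M)$, so taking completions of both sides yields the claimed isometric bijection with $(D, d)$.

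The main subtlety is the second paragraph: transferring the semi-norm axioms of $\rho$, and especially uniform equivalence with a reference definable distance, from $\bG$ to $\varphi$ on all of $D \times D$, since a priori the axioms only make sense on pairs whose type lies in $\bG$. Once the density clause of Lemma \ref{lem:UniversalSkolemSortModel} is used, the rest is a routine application of the UCC correspondence and uniform continuity of formulas.
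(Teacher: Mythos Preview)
Your argument is correct and follows the same overall route as the paper: show that a compatible norm $\rho$ is UCC, invoke \autoref{lem:ContinuousReconstructionUCC} to obtain a formula, and then verify the distance axioms. The one substantive local difference is in the verification of the triangle inequality. You approximate an arbitrary triple $a,b,c$ by elements of $D_0(M)$ via \autoref{lem:UniversalSkolemSortModel} and pass the inequality to the limit; the paper instead observes that $\rho * \rho = \rho$ and applies \autoref{lem:ContinuousReconstructionProductOfFormulas} together with the uniqueness clause of \autoref{lem:ContinuousReconstructionUCC} to conclude directly that
\begin{gather*}
  d(x,z) = \inf_y \, \bigl( d(x,y) + d(y,z) \bigr),
\end{gather*}
which is stronger and avoids the explicit approximation step. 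Your density argument is essentially a re-derivation of the relevant instance of \autoref{lem:ContinuousReconstructionProductOfFormulas}; using that lemma as a black box is more economical. On the other hand, you make the non-degeneracy of the recovered $\varphi$ explicit (via uniform equivalence of $\rho$ with a reference $d_0{}_\bG$), something the paper leaves implicit.
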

\begin{proof}
  We have already observed in \autoref{lem:ContinuousReconstructionCompatibleNorm} that if $d$ is a definable distance on $D$, then $d_\bG$ is a compatible norm.
  For the converse, let $\rho$ be any compatible norm.
  Then it is UCC (by \autoref{rmk:ContinuousReconstructionUCC}), so $\rho = d_\bG$ for some formula $d(x,y)$, which is necessarily the unique continuous extension of $\rho$ to $\tS_{2 D}(T)$.

  We have $d(x,x) = 0$ since $\rho$ vanishes on $\bB$, and $d(x,y) = d(y,x)$ since $\rho(g) = \rho(g^{-1})$.
  In addition, we have $\rho * \rho = \rho$, so by uniqueness of the reconstructed formula and \autoref{lem:ContinuousReconstructionProductOfFormulas},
  \begin{gather*}
    d(x,z) = \inf_y \, \bigl( d(x,y) + d(y,z) \bigr).
  \end{gather*}
  Therefore $d$ defines a distance, and $\rho = d_\bG$.

  The second part is essentially tautological.
\end{proof}

As in \autoref{sec:ClassicalReconstruction}, in order to recover formulas in several variables, we need to replace $\bG \cong \bG^{[2]}$ with $\bG^{[k]} = \bG \backslash \bG^{k/t}$ for arbitrary $k \geq 1$, which we identify with the set of $\tp(a_i : i <k) \in \tS_{k D}(T)$ for which all the $a_i \in D$ code the same model.
In particular, $\bG^{[k]} \subseteq \tS_{k D}(T)$, and is even dense there.
If $\varphi(x_i : i < k)$ is a formula, then we identify it with the corresponding continuous function $\varphi\colon \tS_{k D}(T) \rightarrow \bR$, and let $\varphi_\bG$ be its restriction to $\bG^{[k]}$.
For $U \subseteq \bG^{[k]}$ we may define $(U)_{d<\delta} \subseteq \tS_{k D}(T)$ to consist of all $\tp(b_i : i < k)$ such that there exists $\tp(a_i : i < k) \in U$ satisfying $d(a_i,b_i) < \delta$ for all $i$.
We say that $\xi: \bG^{[k]} \rightarrow \bR$ is UCC if it is continuous, and for every $\varepsilon > 0$ there exists a neighbourhood $\bB \subseteq U$ such that, if $q \in p \cdot U^k$ (with respect to the action $\bG^{[k]} \curvearrowleft \bG^k$), then $|\xi(p) - \xi(q)| < \varepsilon$.

\begin{lem}
  \label{lem:ContinuousReconstructionGk}
  Let $k \geq 1$ and let $d$ be a definable distance on $D$.
  \begin{enumerate}
  \item If $U \subseteq \bG^{[k]}$ is open, then $(U)_{d<\delta}$ is open in $\tS_{k D}(T)$.
  \item \label{item:ContinuousReconstructionGkModulus}
    Let $\rho$ be a compatible norm on $\bG$.
    A continuous function $\xi\colon \bG^{[k]} \rightarrow \bR$ is UCC if and only if for every $\varepsilon > 0$ there exists $\delta > 0$ such that, if if $q \in p \cdot \{\rho < \delta\}^k$, then $|\xi(p) - \xi(q)| < \varepsilon$.
  \item If $\varphi(x_i : i < k)$ is a formula, then $\varphi_\bG\colon \bG^{[k]} \rightarrow \bR$ is UCC, and conversely, every UCC function on $\bG^{[k]}$ is of this form, for a unique formula $\varphi$.
  \end{enumerate}
\end{lem}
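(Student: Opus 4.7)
My plan is to adapt the proofs of \autoref{lem:ContinuousReconstructionOpenThickening}, \autoref{rmk:ContinuousReconstructionUCC}, and \autoref{lem:ContinuousReconstructionUCC} from the case $k = 2$ to arbitrary $k \geq 1$. The arguments should carry over essentially verbatim, with indices running over $i < k$ instead of over the pair $(x,y)$.

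For part (1), if $p = \tp(b_i : i < k) \in (U)_{d < \delta}$ is witnessed by $\tp(a_i : i < k) \in U$, I would pick a basic neighbourhood $[\varphi < 1]_\bG$ of this witness inside $U$, and then consider
\begin{gather*}
  \chi(y_i : i < k) = \inf_{(u_i)} \left[ \varphi(u_i : i < k) \vee \bigvee_{i < k} \frac{d(y_i,u_i)}{\delta} \right].
\end{gather*}
Then $p \in [\chi < 1]$, and any $q \in [\chi < 1]$ has witnesses $(u_i)$ that can be perturbed slightly using \autoref{lem:UniversalSkolemSortModel} to lie in a common separable model, hence giving a point of $\bG^{[k]} \cap U$ witnessing $q \in (U)_{d < \delta}$. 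So $[\chi < 1] \subseteq (U)_{d<\delta}$, showing the latter is open.

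For part (2), this is essentially the definition: since the $\{\rho < \delta\}$ form a basis of neighbourhoods of $\bB$, the neighbourhood $U$ in \autoref{dfn:ContinuousReconstructionUCC} can always be taken of this form, and conversely any neighbourhood of $\bB$ contains some $\{\rho < \delta\}$. One direction of the resulting translation is trivial, and the other uses compatibility of $\rho$ to pass between the two formulations.

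For part (3), the forward implication is standard: every formula $\varphi$ on $D^k$ is uniformly continuous in each argument with respect to $d$, so the norm characterization in part (2) with $\rho = d_\bG$ is immediate, and continuity on the type space is automatic. The converse is the main content. Given a UCC function $\xi\colon \bG^{[k]} \rightarrow \bR$, I want to extend it by continuity to a (unique) continuous function on $\tS_{kD}(T)$, which will then be the desired formula $\varphi$. Given $p = \tp(a_i : i < k) \in \tS_{kD}(T)$ and $\varepsilon > 0$, choose $\delta > 0$ from part (2) for $\rho = d_\bG$, pick a separable model containing all the $a_i$, and use \autoref{lem:UniversalSkolemSortModel} to find $c_i$ coding that model with $d(a_i,c_i) < \delta$, so $g = \tp(c_i : i < k) \in \bG^{[k]}$. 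Normalising $\xi(g) = 0$, let $U = \{|\xi| < \varepsilon\} \subseteq \bG^{[k]}$ and $V = (U)_{d<\delta}$, which is open by part (1) and contains $p$. The argument of \autoref{lem:ContinuousReconstructionUCC} then applies: any $q \in V \cap \bG^{[k]}$ has witnesses close to those of $g$, and after a further application of \autoref{lem:UniversalSkolemSortModel} one brings everything into a single model so that $q$ and a point of $U$ differ by the $\bG^{[k]} \curvearrowleft \bG^k$-action of small-norm elements, whence $|\xi(q)| \leq 2\varepsilon$ by the UCC hypothesis. Uniqueness of the extension follows from density of $\bG^{[k]}$ in $\tS_{kD}(T)$.

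The main obstacle I anticipate is in part (3): one has to carefully arrange \emph{several} applications of \autoref{lem:UniversalSkolemSortModel} to simultaneously bring several tuples (the witnesses for $p$, for $q$, and for membership in the basic open set containing the witness from $U$) into a common separable model, while preserving all uniform-continuity estimates for the formulas involved. This bookkeeping is exactly what was done carefully in \autoref{lem:ContinuousReconstructionUCC} for $k = 2$; extending it to general $k$ is conceptually straightforward but notationally heavier.
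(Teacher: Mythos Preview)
Your proposal is correct and follows exactly the approach the paper takes: its proof of this lemma is literally ``As for \autoref{lem:ContinuousReconstructionOpenThickening} and \autoref{lem:ContinuousReconstructionUCC}, \textit{mutatis mutandis}.'' You have spelled out precisely what those changes amount to, and the bookkeeping you anticipate in part~(3) is indeed the only thing requiring care.
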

\begin{proof}
  As for \autoref{lem:ContinuousReconstructionOpenThickening} and \autoref{lem:ContinuousReconstructionUCC}, \textit{mutatis mutandis}.
\end{proof}

\begin{thm}
  Let $T$ and $T'$ be two complete theories with universal Skolem sorts.
  Then $\bG(T) \cong \bG(T')$ as topological groupoids if and only if $T$ and $T'$ are bi-interpretable.
  Moreover, given $\bG = \bG(T)$ as a topological groupoid, we can reconstruct the theory $T^D$, up to a change of language and choice of distance on $D$ (among all definable distances).
\end{thm}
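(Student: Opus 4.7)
The plan is to assemble the technical lemmas just developed into an explicit reconstruction procedure, in parallel with \autoref{dfn:ClassicalReconstruction} and \autoref{thm:ClassicalReconstruction}. The easy direction is immediate: if $T$ and $T'$ are bi-interpretable, then by \autoref{prp:GT} we have $\bG(T) \cong \bG(T')$ as topological groupoids (both are constructed from the universal Skolem sort, which by \autoref{lem:TD} is an invariant of the bi-interpretation class). So the real content is the reconstruction of $T^D$ from $\bG$.

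Given $\bG$ alone as a topological groupoid, I would first pick any compatible norm $\rho$ on $\bG$; such a norm exists because $\bG = \bG(T)$ arises from a metric setup, but more intrinsically one can use any function of the form $d_\bG$ from \autoref{lem:ContinuousReconstructionCompatibleNorm}. By \autoref{lem:ContinuousReconstructionDistance}, $\rho$ corresponds to a unique definable distance $d$ on $D$, and moreover, for any $e \in \bB$, the completion $\widehat{(e\bG, d^\rho_L)}$ is isometric to $(D,d)$, with $e$ the type of a code for the associated separable model. Next, for each $k \geq 1$, invoke \autoref{lem:ContinuousReconstructionGk} to identify UCC functions on $\bG^{[k]}$ bijectively with $k$-ary formulas in $D$ (modulo equivalence in $T$). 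One then defines a language $\cL(\bG,\rho)$ with a single sort $D$ and distance $d$, together with a predicate symbol $P_\xi$ for each $\xi$ in a countable dense family of UCC functions on each $\bG^{[k]}$. The theory $T(\bG,\rho)$ is then taken to be the theory of the family $\{M_e : e \in \bB\}$, where $M_e = \widehat{(e\bG, d^\rho_L)}$ is interpreted as a metric structure via the right action $\bG^{[k]} \curvearrowleft \bG^k$: the tuple $(g_i)_{i<k}$ with common target $e$ represents the tuple of elements $(s_{g_i})$ in $M_e$, and $P_\xi$ evaluated at this tuple is $\xi(\bG(e, g_0, \ldots, g_{k-1}))$.

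By construction and the correspondence lemmas, $T(\bG,\rho)$ coincides, up to change of language and choice of definable distance, with $T^D$. Combined with \autoref{lem:TD} (which says $T^D$ is bi-interpretable with $T$ and depends only on the bi-interpretation class), this completes the reconstruction: if $\bG(T) \cong \bG(T')$, the reconstructed theories agree, so $T^D$ and $T'^{D'}$ agree up to language, hence $T$ and $T'$ are bi-interpretable. The main obstacle I would expect is not conceptual but in carefully checking that the action $\bG^{[k]} \curvearrowleft \bG^k$ translates correctly under the identification of each $M_e$ with its incarnation $\widehat{(e\bG,d_L^\rho)}$, so that the value of a named predicate $P_\xi$ on a tuple in $M_e$ really equals $\xi$ evaluated at the corresponding point of $\bG^{[k]}$. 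This amounts to pushing the isometry from the second part of \autoref{lem:ContinuousReconstructionDistance} through the multi-variable version, and is a routine (if slightly fiddly) extension of the two-variable case.
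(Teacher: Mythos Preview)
Your proposal is correct and follows essentially the same approach as the paper: fix a compatible norm $\rho$ (equivalently a definable distance on $D$ via \autoref{lem:ContinuousReconstructionDistance}), set $M_e = \widehat{(e\bG, d_L^\rho)}$, name each UCC $\xi$ on $\bG^{[k]}$ as a predicate $P_\xi$ interpreted by $P_\xi(g) = \xi(\bG g)$, and identify the resulting theory with $T^D$. One wording slip: the tuple $(g_i)$ represents elements of $M_e$ (namely the $g_i$ themselves in $e\bG$, or their images $b_i \in D(M)$ under the isometry of \autoref{lem:ContinuousReconstructionDistance}), not the sources $(s_{g_i})$, which live in $\bB$; the paper also makes explicit that the bound and continuity modulus required for each symbol $P_\xi$ are supplied by \autoref{lem:ContinuousReconstructionGk}\autoref{item:ContinuousReconstructionGkModulus}.
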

\begin{proof}
  For the first assertion, one direction has already been observed ($\bG(T)$ only depends on $T$ up to bi-interpretation), and the other direction follows from the moreover part.

  For the reconstruction, we must first choose (arbitrarily) a compatible norm $\rho$ on $\bG$, which, by \autoref{lem:ContinuousReconstructionCompatibleNorm}, is the same thing as choosing a definable distance $d$ on the universal Skolem sort $D$ (so $\rho = d_\bG$).
  We define $\cL^D$ to consist of a single metric sort, together with a $k$-ary predicate symbol $P_\xi$ for each UCC function $\xi$ on $\bG^{[k]}$ (or for a countable uniformly dense family of such functions).

  We need to specify a bound and a continuity modulus for each symbol: if $\xi$ is UCC, then \autoref{lem:ContinuousReconstructionGk}\autoref{item:ContinuousReconstructionGkModulus} (with the chosen $\rho$) provides us with a modulus of continuity.
  In addition, $P_\xi$ is the restriction of a formula, and therefore bounded.
  In particular, we use the bound on $\rho$ for a bound on the diameter in $\cL^D$.

  Next, for each $e \in \bB$ we define $M_e = \widehat{(e \bG,d_L^\rho)}$ (where we recall that $d_L^\rho(f,g) = \rho(f^{-1}g)$).
  We interpret each predicate $P_\xi$ on $e \bG$ as:
  \begin{gather*}
    P_\xi(g) = \xi(\bG g).
  \end{gather*}
  It satisfies the prescribed bound and continuity modulus, and in particular extends continuously to all of $M_e$.

  If $e = \tp(a)$, where $a$ codes $M$, and if we identify $P_\xi$ with the formula $\varphi$ of $T$ such that $\xi = \varphi_\bG$, then $M_e$ is isomorphic to $D(M)$.
  Then the theory of any $M_e$ (or of the family of all of them) is, up to a change of language, $T^D$.
\end{proof}

\section{Further questions}
\label{sec:Questions}

We have intentionally kept this paper relatively short, with the bare minimum of associating $\bG(T)$ to $T$ and reconstructing $T$ from $\bG(T)$.
Let us point out some further topics for research.
About some of them some progress has already made, and they may be treated in a subsequent paper, whiles others are wide open.

\subsection*{General groupoids}

It follows from our work that $\bG = \bG(T)$ admits compatible norms, and that UCC functions on $\bG$, or even on $\bG^{[k]}$, separate points and closed sets (i.e., determine the topology).
For a general topological groupoid, even assuming that it is open, (completely) metrisable and that $\bB$ is the Cantor space, the best we can show is that it admits a \emph{semi-compatible} norm, namely one such that the sets $\{\rho < r\}$ for a basis of open neighbourhoods of identity, so it is upper semi-continuous, but not necessarily continuous.
We can also show that under reasonable hypotheses, the existence of a compatible norm and of sufficiently many UCC functions are equivalent (this will appear in a subsequent paper).

In groups, UC functions (uniformly continuous with respect to the Roelcke uniformity) are analogous to our UCC functions, and are closely related to the Roelcke completion and the Roelcke compactification.
A polish group $G$ is of the form $G(T)$, for $\aleph_0$-categorical $T$, if and only if it is Roelcke pre-compact (see \cite{BenYaacov-Tsankov:WAP}), i.e., if and only if the compactification and completion agree.

\begin{qst}
  State general hypotheses under which a topological groupoid admits a compatible norm / sufficiently many UCC functions.
  The conditions must hold when $\bG = \bG(T)$, and not refer to the $\bG(T)$ construction explicitly.
\end{qst}

\begin{qst}
  Construct analogues of the Roelcke compactification and the Roelcke completion of a groupoid (possibly under certain hypotheses).
  When $\bG = \bG(T)$, both should be $\tS_{2 D}(T)$.
\end{qst}

\begin{qst}
  Characterise topological groupoids of the form $\bG(T)$.
  Ideally, the characterisation should be: first, some general conditions hold, ensuring in particular that the Roelcke completion makes sense, and second, the Roelcke completion is compact.
\end{qst}

\subsection*{Universal Skolem sorts and possible generalisations}

We have only constructed $\bG(T)$ when $T$ admits a universal Skolem sort.
We know that this is true when $T$ is classical or $\aleph_0$-categorical.
In his Ph.D.\ dissertation (in progress), Jorge Muñoz shows that if $T$ admits a universal Skolem sort, then so does its randomisation $T^R$ (see \cite{BenYaacov-Keisler:MetricRandom,BenYaacov:RandomVariables}), giving an explicit construction of one sort from the other.
On the other hand, in \autoref{exm:NoSkolemSort} we showed that a Skolem sort need not always exist.

\begin{qst}
  Can the $\bG(T)$ construction be extended, or generalised, to all theories?
\end{qst}

By \emph{generalise} we mean something similar to how our groupoid construction and reconstruction relate to the $\aleph_0$-categorical situation: the groupoid $\bG(T)$ is not the same as the group $G(T)$, but one can be trivially recovered from the other.

\subsection*{The category of interpretations}

We have shown that isomorphisms of $\bG(T)$ and $\bG(T')$ correspond to bi-interpretations of $T$ and $T'$.
When $T$ and $T'$ are $\aleph_0$-categorical, interpretations on $T'$ in $T$ correspond to continuous morphisms $G(T) \rightarrow G(T')$ such that the isometric action $G(T) \curvearrowright \widehat{G(T')}_L$ has compactly many orbit closures (see \cite{BenYaacov-Kaichouh:Reconstruction}).
More precisely, the category of interpretations of $\aleph_0$-categorical theories (modulo a reasonable equivalence relation) is equivalent to the category of Roelcke pre-compact Polish groups with such morphisms.

\begin{qst}
  Provide a correspondence between interpretations of $T'$ in $T$ and (special) morphisms of groupoids $\bG(T) \rightarrow \bG(T')$.
  The category of interpretations of complete countable theories should be equivalent to the category of $\bG(T)$ with some condition on the morphisms.
\end{qst}

\subsection*{Model theoretic properties}

One of the motivations for the present work lies in the fact that for an $\aleph_0$-categorical theory $T$, model-theoretic properties (in particular, stability and NIP) correspond to dynamical properties of the system $G \curvearrowright R$, where $G = G(T)$ and $R$ is its Roelcke completion/compactification (see \cite{BenYaacov-Tsankov:WAP,Ibarlucia:DynamicalHierarchy}).

\begin{qst}
  Extend the above to the case where $\bG = \bG(T)$, so the corresponding dynamical system should be $\bG(T) \curvearrowright \tS_{2 D}(T)$.
\end{qst}

\begin{qst}
  Together with work of Muñoz alluded to above, extend the preservation arguments of \cite{Ibarlucia:AutRand} from $\aleph_0$-categorical theories to arbitrary ones (admitting a universal Skolem sort).
\end{qst}

\bibliographystyle{begnac}
\bibliography{begnac}

\end{document}